\newtheorem{theorem}{Theorem}[section]
\newtheorem{definition}{Definition}[section]
\newtheorem{proposition}{Proposition}[section]
\newtheorem{lemma}{Lemma}[section]
\newtheorem{remark}{Remark}[section]
\newcommand{\normF}[1]{\ensuremath{\| #1 \|_{F}  }}    
\newcommand{\T}{\ensuremath{ \mathbb R^{n_1\times \cdots\times n_d}   }}
\newcommand{\bigxiaokuohao}[1]{\ensuremath{ \left(  #1 \right) }}      
\newcommand{\bigjueduizhi}[1]{\ensuremath{ \left|  #1 \right| }}   
\newcommand{\bigzhongkuohao}[1]{\ensuremath{ \left[   #1 \right] }}      
\newcommand{\bigfnorm}[1]{\ensuremath{ \left\|   #1 \right\|_F }}    
\newcommand{\bignorm}[1]{\ensuremath{ \left\|   #1 \right\|  }}        
\newcommand{\bigllbracket}[1]{\ensuremath{ \left\llbracket   #1 \right\rrbracket }}      
\newcommand{\innerprod}[2]{\ensuremath{ \left\langle   #1 , #2\right\rangle }}      
\newcommand{\bigotimesu}{\ensuremath{     \bigotimes^d_{j=1}\nolimits \mathbf u_{j,i  } }}
\newcommand{\tensorsigmaU}{\ensuremath{     \llbracket \boldsymbol{ \sigma};U_j \rrbracket                             }}   
\newcommand{\stmanifold}[2]{\ensuremath{     {\rm st}(#1,#2)  }}
\newcommand{\deltaUjP}[2]{\ensuremath{ \Delta_{U_j,\mathcal T}^{#1,#2}   }}      
\newcommand{\deltaUj}[2]{\ensuremath{ \Delta_{U_j}^{#1,#2}   }}      
\newcommand{\deltasigma}[2]{\ensuremath{ \Delta_{\boldsymbol{ \sigma}}^{#1,#2}   }}      
\newcommand{\deltaP}[2]{\ensuremath{ \Delta_{\mathcal T}^{#1,#2}   }}      
\newcommand{\deltaY}[2]{\ensuremath{ \Delta_{\mathcal Y}^{#1,#2}   }}      
\newcommand{\deltaW}[2]{\ensuremath{ \Delta_{\mathcal W}^{#1,#2}   }}      
\newcommand{\tildeLtaualpha}[2]{\ensuremath{ \tilde L_{\tau,\alpha}^{#1,#2}    }}
	\definecolor{darkgray}{rgb}{0.66, 0.66, 0.66}
\newenvironment{mytabular}{\bgroup\tiny\tabular}{\endtabular\egroup}
\newenvironment{mytabular1}{\bgroup\footnotesize\tabular}{\endtabular\egroup}
\title{Half-Quadratic Alternating Direction Method of Multipliers for Robust Orthogonal   Tensor Approximation}
\author{Yuning Yang\thanks{College of Mathematics and Information Science, Guangxi University, Nanning, 530004, China  (yyang@gxu.edu.cn).} \and Yunlong Feng\thanks{Department of Mathematics and Statistics, State University of New York at Albany, Albany, New York 12222, USA (ylfeng@albany.edu).}
}
\begin{document} %\large
\maketitle

\begin{abstract}
  Higher-order tensor canonical polyadic decomposition (CPD) with one or more of the latent factor matrices being columnwisely orthonormal has been well studied in recent years. However, most existing models penalize the noises, if occurring, by employing the least squares loss, which may be sensitive to non-Gaussian noise or outliers, leading to   bias estimates of the latent factors. In this paper, based on the maximum a posterior estimation, we derive a robust orthogonal tensor CPD model with Cauchy loss, which
    is resistant to   heavy-tailed noise or outliers. By exploring the  half-quadratic property of the model, a new method, which is termed as half-quadratic alternating direction method of multipliers (HQ-ADMM), is proposed to solve the model. Each subproblem involved in HQ-ADMM admits a closed-form solution. Thanks to some nice properties of the Cauchy loss, we show that the whole sequence generated by the algorithm globally converges to a stationary point of the problem under consideration. Numerical experiments on synthetic and real data demonstrate the efficiency and robustness of the proposed model and algorithm. 

\noindent {\bf Key words: }  Tensor, canonical polyadic decomposition, robust, Cauchy, HQ-ADMM\\
\hspace{2mm}\vspace{3mm}

\end{abstract}

\section{Introduction} \label{sec:intro}
A tensor is a multidimensional array. Owing to its ability to represent data with intrinsically many dimensions, tensors draw much attention from the communities of signal processing, image processing, machine learning, etc; see the surveys \cite{kolda2010tensor,cichocki2015tensor,sidiropoulos2017tensor}. To understand the relationship behind the data tensor, decomposition  tools are needed. In general, tensor decomposition aims at factorizing the data tensor into a set of lower-dimensional latent factors, where the factors can be vectors, matrices or even tensors. Among the decomposition models, tensor canonical polyadic decomposition (CPD), which factorizes a tensor into a sum of component rank-$1$ tensors,  is one of the most important   models. Tensor CPD finds applications in blind multiuser CDMA, blind source separation, and so on \cite{sidiropoulos2017tensor}. Different from matrix decompositions, tensor CPD is  unique  under quite mild conditions \cite{kolda2010tensor}. 
 
 In some applications, one or more latent factors of the CPD are required to have orthonormal columns. For example, in linear image coding \cite{shashua2001linear}, one is given a set of data matrices of the same size; to explore their commonalities, one   projects the matrices onto a latent lower-dimensional subspace in which the subspace can be represented by the Khatri-Rao product \cite{kolda2010tensor} of two columnwisely orthonormal matrices. Such a problem has been formulated as a third-order tensor CPD with two factor matrices having orthonormal columns. On the other hand, simultaneous foreground-background extraction and compression can also be formulated as a model of the same kind; this will be illustrated in Sect. \ref{sec:numer}.
 Other applications of CPD with orthonormal factors can be found in \cite{de2011short,de2010algebraic,sidiropoulos2000blind,de2013joint,sorensen2010parafac}.
 
 In reality, due to the NP-hardness of determining the tensor rank \cite{hillar2013most}, and due to the presence of noise, tensor CPD model with orthonormal factors is rarely exact, and it is necessary to resort to an approximation scheme. To numerically solve the problem, one usually formulates it as an optimization problem that minimizes the Euclidean distance between the data tensor and the latent tensor over orthonormal constraints, and then applies    an alternating optimization  type  method to solve it  based on polar decomposition   \cite{chen2009tensor,sorensen2012canonical,wang2015orthogonal,pan2018symmetric,guan2019numerical,yang2019epsilon,hu2019linear,li2019polar}. Other types of methods can be found in \cite{ishteva2013jacobi,li2018globally,savas2010quasi,de2000a}; just to name a few.
 
 Although the  optimization model mentioned above is effective in some circumstances, note that the Euclidean distance,   built upon the least squares loss that is not robust \cite{huber2004robust}. As a result, when the data tensor is contaminated by heavy-tailed noise or outliers, such least squares based models     often lead  to   bias estimates of the true latent factors, as having been observed   in practice.    This drawback of the least squares based   models motivates  us to develop a new model that is robust to heavy-tailed noise or outliers. 
 
 In this work, from the maximum a posterior estimation, we derive a robust tensor CPD model where   one or more latent  factors have orthonormal columns. Such a model is based on the Cauchy loss, whose robustness comes from the redescending property of the loss function, as pointed out in robust statistics \cite{huber2004robust}. We then explore the half-quadratic property of the model, based on which, the half-quadratic alternating direction method of multipliers (HQ-ADMM)  is proposed to solve the model. An advantage of HQ-ADMM is that every subproblem involved in the algorithm admits a closed-form solution. Under a very mild assumption on the parameter, HQ-ADMM is proved to globally converge to a stationary point of the problem under consideration, owing to some nice properties of the Cauchy loss. In fact, the spirit of HQ-ADMM can be extended to solving other Cauchy loss based machine learning and scientific computing problems (besides tensor problems), which will be remarked later in Sect. \ref{sec:alg}. Finally, we show via numerical experiments that the proposed model is resistant to heavy-tailed noise such as Cauchy noise, outliers, and also performs well with Gaussian noise; the proposed HQ-ADMM is observed to be efficient.
 
 The rest of the paper is organized as follows. The robust tensor approximation model is formulated  in Sect. \ref{sec:problem}, with some quantitative properties given.  The HQ-ADMM is developed in Sect. \ref{sec:alg}; the convergence analysis of HQ-ADMM is provided in Sect. \ref{sec:conv}. Numerical results are illustrated in Sect. \ref{sec:numer}. We end this paper in Sect. \ref{sec:conclusion} with conclusions.

\section{Problem Formulation and the Optimization Model} \label{sec:problem}
 \paragraph{Notations} Vectors are written as boldface lowercase letters $(\mathbf x,\mathbf y,\ldots)$, matrices
 are denoted as italic capitals $(A,B,\ldots)$, and tensors are
 written as calligraphic capitals $(\mathcal{A}, \mathcal{B},
 \cdots)$. $\mathbb R$   denotes  the real   field.  $\mathbb R^{m\times n}$ denotes real matrices of dimension $m\times n$ and $\T$ denotes tensor space of size $n_1\times\cdots \times n_d$. The Frobenius norm, $\bigfnorm{\cdot}$, of a matrix or a tensor, is defined to be the square root of the sum of squares of all the entries. The inner product $\innerprod{\cdot}{\cdot}$ between a pair of matrices or tensors of the same size is given by the sum of entrywise product.
$\otimes$ denotes the outer product of two vectors. Other notations will be introduced whenever necessary.

Let $\mathcal A=\bigxiaokuohao{\mathcal A_{i_1\cdots i_d}}\in \T$ be a $d$-th order observed data tensor. We consider the inexact CPD of $\mathcal A$, i.e., approximating $\mathcal A$ by   a sum of rank-1 tensors:
\begin{equation}\label{prob:cpd}			      \setlength\abovedisplayskip{2pt}
\setlength\abovedisplayshortskip{2pt}
\setlength\belowdisplayskip{2pt}
\setlength\belowdisplayshortskip{2pt}
\mathcal A =  \sum^R_{i=1}\nolimits\sigma_i \bigotimes^d_{j=1}\nolimits \mathbf u_{j,i  } + \mathcal N \in \T;
\end{equation}
here $\mathbf u_{j,i}\in\mathbb R^{n_j},1\leq j\leq d$, $\bigotimes_{j=1}^d\mathbf u_{j,i}$ denotes the rank-1 tensor  given by the outer product of $\mathbf u_{j,i}$'s, $\sigma_i$'s are real scalars,  $R>0$ is a given integer, where usually  $R$ is such that   $R\leq \min\{n_1,\ldots,n_d \}$ for a possibly low-rank approximation, while $\mathcal N$ denotes the noisy tensor. 

Denote   $U_j
 := \bigzhongkuohao{ \mathbf u_{j,1},\ldots, \mathbf u_{j,R}} \in\mathbb R^{n_j\times R}$ and $\boldsymbol{ \sigma}:=[\sigma_1,\ldots,\sigma_R]\in\mathbb R^R$. Then $U_j$'s are called the latent factor matrices of $\mathcal A$. Throughout this work, we follow \cite{kolda2010tensor} to write the sum of rank-1 terms as
 \[
 \llbracket \boldsymbol{ \sigma}; U_1,\ldots,U_d\rrbracket := \sum^R_{i=1}\nolimits\sigma_i \bigotimes^d_{j=1}\nolimits\mathbf u_{j,i};
 \]
 moreover, we write $\llbracket \boldsymbol{ \sigma}; U_j\rrbracket :=  \llbracket \boldsymbol{ \sigma}; U_1,\ldots,U_d\rrbracket $ for short. 
In the sequel,   we base our work on the following setup:
 	\begin{itemize}
 		\item One or more $U_j$'s are columnwiely orthonormal. Without loss of generality, we assume that   the last $t$ $(1\leq t\leq d)$ matrices are columnwisely orthonormal, i.e., $$U_j^\top U_j = I,~d-t+1\leq j\leq d,$$
 		where $I $ is an identity matrix of the proper size;
 		\item The columns of the first $d-t$ matrices are normalized, i.e.,
 		\[
 		\bignorm{\mathbf u_{j,i}} = 1,~1\leq j\leq d-t,1\leq i\leq R;
 		\]
 		\item Entries of the noisy tensor $\mathcal N$ are i.i.d..
 	\end{itemize}
We immediately have the following proposition.
\begin{proposition}\label{prop:orth}
	There holds $\bigfnorm{\bigotimes_{j=1}^d\mathbf u_{j,i} }=1$, $1\leq i\leq R$, and $ \innerprod{\bigotimes_{j=1}^d\mathbf u_{j,i_1}   }{\bigotimes_{j=1}^d\mathbf u_{j,i_2}  }=0,~i_1\neq i_2.  $
\end{proposition}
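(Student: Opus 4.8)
The plan is to reduce both assertions to the single multiplicative identity
\[
\innerprod{\bigotimes_{j=1}^d \mathbf u_{j,i_1}}{\bigotimes_{j=1}^d \mathbf u_{j,i_2}} = \prod_{j=1}^d \innerprod{\mathbf u_{j,i_1}}{\mathbf u_{j,i_2}},
\]
valid for any pair of indices $i_1,i_2$, after which both statements fall out by inspecting the two types of factor matrices separately.

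First I would establish this identity directly from the definitions. The $(k_1,\ldots,k_d)$ entry of the rank-$1$ tensor $\bigotimes_{j=1}^d \mathbf u_{j,i}$ is the product $\prod_{j=1}^d (\mathbf u_{j,i})_{k_j}$ of the corresponding vector entries; writing out the entrywise-product definition of $\innerprod{\cdot}{\cdot}$ and then factoring the multiple sum $\sum_{k_1,\ldots,k_d}$ into a product of single sums $\prod_{j=1}^d\sum_{k_j}$ yields the claimed identity. This factorization of the sum is the only genuine computation involved, and it is entirely routine.

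With the identity in hand, the first claim follows by setting $i_1=i_2=i$, which gives $\bigfnorm{\bigotimes_{j=1}^d \mathbf u_{j,i}}^2 = \prod_{j=1}^d \bignorm{\mathbf u_{j,i}}^2$. For $1\leq j\leq d-t$ the normalization assumption gives $\bignorm{\mathbf u_{j,i}}=1$, while for $d-t+1\leq j\leq d$ the diagonal entries of $U_j^\top U_j = I$ give $\bignorm{\mathbf u_{j,i}}^2=1$; hence every factor equals $1$ and the product is $1$. For the second claim I take $i_1\neq i_2$. Since $t\geq 1$, there is at least one orthonormal factor matrix $U_j$ with $d-t+1\leq j\leq d$, and the off-diagonal entries of $U_j^\top U_j = I$ force $\innerprod{\mathbf u_{j,i_1}}{\mathbf u_{j,i_2}}=0$; this single vanishing factor annihilates the whole product, and therefore the tensor inner product, proving orthogonality.

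No real obstacle arises: the entire content is the multiplicative identity for inner products of outer products, and the standing assumption $t\geq 1$ is precisely what guarantees the presence of an orthonormal mode responsible for the vanishing of the cross term.
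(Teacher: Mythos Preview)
Your argument is correct and is exactly the standard reasoning the paper implicitly relies on; the paper itself states this proposition as immediate and gives no explicit proof. The key points you identify---the multiplicative identity for inner products of outer products, together with the standing assumption $t\geq 1$ furnishing at least one orthonormal mode---are precisely what makes the claim obvious.
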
 
Note that the   constraints on $\mathbf u_{j,i} $ and $U_j$ are all   Stiefel manifolds ${\rm st}(m,n):= \{P\in\mathbb R^{m\times n}\mid P^\top P = I  \}$. Therefore, in the following, we write the constraints on $\mathbf u_{j,i}$ and $U_j$ as
\begin{equation*}
\begin{split}
&\mathbf u_{j,i} \in \stmanifold{n_j}{1},~1\leq j\leq d-t,1\leq i\leq R,\\
& U_j\in\stmanifold{n_j}{R},~d-t+1\leq j\leq R.
\end{split}
\end{equation*}

In the presence of the noisy term $\mathcal N$, it is   natural to deal with \eqref{prob:cpd} via solving the following optimization problem \cite{chen2009tensor,sorensen2012canonical,wang2015orthogonal,guan2019numerical,yang2019epsilon}:
\begin{equation}			      \setlength\abovedisplayskip{2pt}
\setlength\abovedisplayshortskip{2pt}
\setlength\belowdisplayskip{2pt}
\setlength\belowdisplayshortskip{2pt}
\label{prob:obj_ls}
\min_{\boldsymbol{ \sigma}, \mathbf u_{j,i}\in\stmanifold{n_j}{1},U_j\in\stmanifold{n_j}{R}} \bigfnorm{ \mathcal A -  \bigllbracket{ \boldsymbol{\sigma}; U_j  }      }^2 =  \sum^{n_1,\ldots,n_d}_{i_1=1,\ldots,i_d=1}\bigxiaokuohao{  \mathcal A_{i_1\cdots i_d} -  \bigllbracket{ \boldsymbol{\sigma}; U_j }_{i_1\cdots i_d}     }^2.
\end{equation}
 From a statistical estimation viewpoint, the above model is built upon the least squares loss $\ell_2(t) := t^2/2$, i.e., it employs the $\ell_2(\cdot)$ loss to deal with noise. However, it is commonly known that the estimators induced by the least squares loss are sensitive to heavy-tailed noise or outliers; in other words, by using the model \eqref{prob:obj_ls}, one   assumes that every entry of $\mathcal N$ obeys the standard Gaussian distribution by default.
 
\paragraph{Derivation of our model} In real-world applications, data may be contaminated by heavy-tailed noise, and even outliers/impulsive noise. A typical non-Gaussian and heavy-tailed noise is the Cauchy noise, whose probability density function is given by
 \begin{equation*}\label{eq:cauchy_noise}
 P_{{\rm Cauchy} }(t) \propto \frac{1}{1+ (t-c)^2/\delta^2},
 \end{equation*}
 where $\delta>0$ is the scale parameter and $c$ is the location paramter. By assuming the symmetry of the noise, we let $c=0$ in the above function.  
 
We derive our model from the maximum a posterior (MAP) estimation by assuming that $\mathcal N$ obeys the Cauchy distribution whose density function is given above. To this end, denote respectively the indicator function $\boldsymbol{1}_C(\cdot)$ and the characteristic function $\iota_C(\cdot)$ of a closed set $C$ as follows
\begin{equation*}
\begin{split}
& \boldsymbol{1}_C(\mathbf x) = 1,~{\rm if}~\mathbf x\in C;~ \boldsymbol{1}_C(\mathbf x) = 0,~{\rm if}~\mathbf x\not\in C,\\
& \iota_C(\mathbf x) = 0,~{\rm if}~\mathbf x\in C;~ \iota_C(\mathbf x) = +\infty,~{\rm if}~\mathbf x\not\in C.
\end{split}
\end{equation*}
From the constraints on $\mathbf u_{j,i}$ and $U_j$,   it is natural to impose a uniform prior belief distributional assumption on   $\{\mathbf u_{j,i}, U_j\}$ as follows
\begin{equation}
\label{eq:distribution_Uj}
P( \bigllbracket{\boldsymbol{ \sigma}; U_j} ) \propto \prod^{d-t}_{j=1}\nolimits\prod^R_{i=1}\nolimits\boldsymbol{1}_{\stmanifold{n_j}{1}  }(\mathbf u_{j,i}) \cdot \prod^d_{j=d-t+1}\nolimits\boldsymbol{1}_{\stmanifold{n_j}{R}  }(U_j).
\end{equation} 
On the other hand, in the presence of Cauchy noise, the probability of the observed data tensor $\mathcal A$ conditioned on $\bigllbracket{\boldsymbol{ \sigma}; U_j}$ is given by 
\begin{equation}
\label{eq:distribution_given_Uj_know_A}
P\left(\mathcal A_{i_1\cdots i_d} \mid \bigllbracket{\boldsymbol{ \sigma}; U_j}_{i_1\cdots i_d}  \right) \propto \frac{1}{1 + \bigxiaokuohao{ \bigllbracket{\boldsymbol{ \sigma};U_j}_{i_1\cdots i_d} -\mathcal A_{i_1\cdots i_d}    }^2/\delta^2   },~1\leq i_j\leq n_j,~1\leq j\leq d.
\end{equation}
With \eqref{eq:distribution_Uj} and \eqref{eq:distribution_given_Uj_know_A} at hand, using Bayes's rule, the MAP estimation 
is given by
\begin{eqnarray*}
\{\boldsymbol{ \sigma}^*,U_j^* \} &=& \arg\max P\bigxiaokuohao{\bigllbracket{\boldsymbol{ \sigma};U_j }\mid \mathcal A  }\\
&=&\arg\max \frac{P\bigxiaokuohao{\mathcal A\mid \bigllbracket{\boldsymbol{ \sigma};U_j}}\cdot P\bigxiaokuohao{\bigllbracket{\boldsymbol{ \sigma};U_j}}  }{P(\mathcal A) }\nonumber\\
&=& \arg\max  \prod^{n_1,\ldots,n_d}_{i_1=1,\ldots,i_d=1}P\bigxiaokuohao{\mathcal A_{i_1\cdots i_d}\mid \bigllbracket{ \boldsymbol{ \sigma};U_j}_{i_1\cdots i_d}   } \cdot P\bigxiaokuohao{\bigllbracket{\boldsymbol{ \sigma};U_j}}\nonumber\\
&\overset{t\leftarrow -\log(t)}{=}& \arg\min \sum^{n_1,\ldots,n_d}_{i_1=1,\ldots,i_d=1}\log\bigxiaokuohao{ 1+ \bigxiaokuohao{ \bigllbracket{\boldsymbol{ \sigma};U_j}_{i_1\cdots i_d}-\mathcal A_{i_1\cdots i_d}  }^2/\delta^2    }\nonumber\\
&&~~~~~~~~~~~~~~~~~~~~ - \sum^{d-t}_{j=1}\sum^R_{i=1}\log\bigxiaokuohao{ \boldsymbol{1}_{\stmanifold{n_j}{1}  }(\mathbf u_{j,i})   } - \sum^d_{j=d-t+1} \log\bigxiaokuohao{  \boldsymbol{1}_{\stmanifold{n_j}{R}  }(U_j)  }\nonumber\\
&=& \arg\min \sum^{n_1,\ldots,n_d}_{i_1=1,\ldots,i_d=1}\log\bigxiaokuohao{ 1+ \bigxiaokuohao{ \bigllbracket{\boldsymbol{ \sigma};U_j}_{i_1\cdots i_d}-\mathcal A_{i_1\cdots i_d}  }^2/\delta^2    } \nonumber\\
&&~~~~~~~~~~~~~~~~~~~~ + \sum^{d-t}_{j=1}\sum^R_{i=1} \iota_{\stmanifold{n_j}{1}  }(\mathbf u_{j,i})    + \sum^d_{j=d-t+1}   \iota_{\stmanifold{n_j}{R}  }(U_j),    
\end{eqnarray*}
where in the last equality, we have defined $\log(0)=-\infty$. Therefore, from the above deduction,   to deal with \eqref{prob:cpd} in the presence of Cauchy noise (or even other heavy-tailed noise or outliers), we prefer to solve the following optimization model
\begin{equation} \label{prob:robust_orth_main}
\setlength\abovedisplayskip{4pt}
\setlength\abovedisplayshortskip{4pt}
\setlength\belowdisplayskip{4pt}
\setlength\belowdisplayshortskip{4pt}
\begin{split}
&\min~     \boldsymbol{\Phi}_\delta( \mathcal A-\llbracket \boldsymbol{ \sigma}; U_j\rrbracket     ) :=\frac{\delta^2}{2}\sum^{n_1,\ldots,n_d}_{i_1=1,\ldots,i_d=1}  \log \bigxiaokuohao{  1+  \bigxiaokuohao{ \bigllbracket{\boldsymbol{ \sigma};U_j}_{i_1\cdots i_d} -\mathcal A_{i_1\cdots i_d}    }^2/\delta^2   } \\
&~~    {\rm s.t.}~  \mathbf u_{j,i}\in \stmanifold{n_j}{1}, 1\leq j\leq d-t ,  1\leq i \leq R,\\
&~~~~~~~ U_j \in \stmanifold{n_j}{R},  d-t+1\leq j\leq d.
\end{split}
\end{equation}
Comparing \eqref{prob:robust_orth_main} with \eqref{prob:obj_ls}, we see that the difference is that the least squares loss $\ell_2(t) = t^2/2$ is replaced   by the statistically motivated loss function
\begin{equation}\label{loss:cauchy}
\phi_\delta(t) : = \frac{\delta^2}{2} \log\bigxiaokuohao{ 1+  { t^2  }/{\delta^2}     }.
\end{equation}
$\phi_\delta(\cdot)$ is called the Cauchy loss. In recent years,   various research has been focused on Cauchy loss based models; see, e.g., \cite{he2010maximum,sciacchitano2015variational,yang2015robust,guan2017truncated,li2018robust,mei2018cauchy,meng2020cauchy,kim2020cauchy}.

 We discuss some   properties of the proposed model \eqref{prob:robust_orth_main} from the robust statistics viewpoint, which shows \eqref{prob:robust_orth_main} is not only resistant to  Cauchy noise, but  may also be resistant to other heavy-tailed noise or outliers.    Firstly, we observe that
 \begin{equation}\label{loss:cauchy_derivative}
 \lim_{|t|\rightarrow+\infty} \phi_\delta'(t) = \lim_{|t|\rightarrow+\infty} \frac{t}{1+t^2/\delta^2} = 0.
 \end{equation}
 Such a property is called the redescending property in robust statistics \cite{huber2004robust}, and the minimizer of \eqref{prob:robust_orth_main} is called a redescending M-estimator. It is known that the redescending M-estimator is robust to heavy-tailed noise and outliers \cite{huber2004robust}. As a comparison, the derivative of the least squares loss $\ell_2(t)=t^2/2$ is $t$, whose limit is infinity, which does not have the redescending property.  Other loss functions admitting the redescending property include the Welsch loss \cite{holland1977robust,feng2015learning,feng2020statistical}, the Tukey loss \cite{beaton1974fitting}, the German loss \cite{ganan1985bayesian}, and so on.
 
 Secondly, the parameter $\delta$ in \eqref{loss:cauchy} controls the robustness of the model \eqref{prob:robust_orth_main}. 
 From \eqref{loss:cauchy_derivative}, we see that  the smaller $\delta$ is, the faster $\phi_\delta'(t) $ converges to zero. We plot $\phi_\delta^\prime(t)$ with different $\delta$ in the right panel of Fig. \ref{fig:loss}.  On the other hand, taking Taylor expansion of $\phi_\delta(t)$ at $0$ yields $\phi_\delta(t) = t^2/2 + o( t^2/\delta^2  )$, which shows that $\phi_\delta(t)\approx t^2/2$ as $\delta\rightarrow \infty$.
These observations imply that a small $\delta$ can enhance the robustness of \eqref{prob:robust_orth_main}. This also reminds us that our model \eqref{prob:robust_orth_main} is also resistant to Gaussian noise by simply setting a large enough $\delta$.     We   also plot $\phi_\delta(t)$ with different $\delta$ in the left panel of Fig. \ref{fig:loss}.

 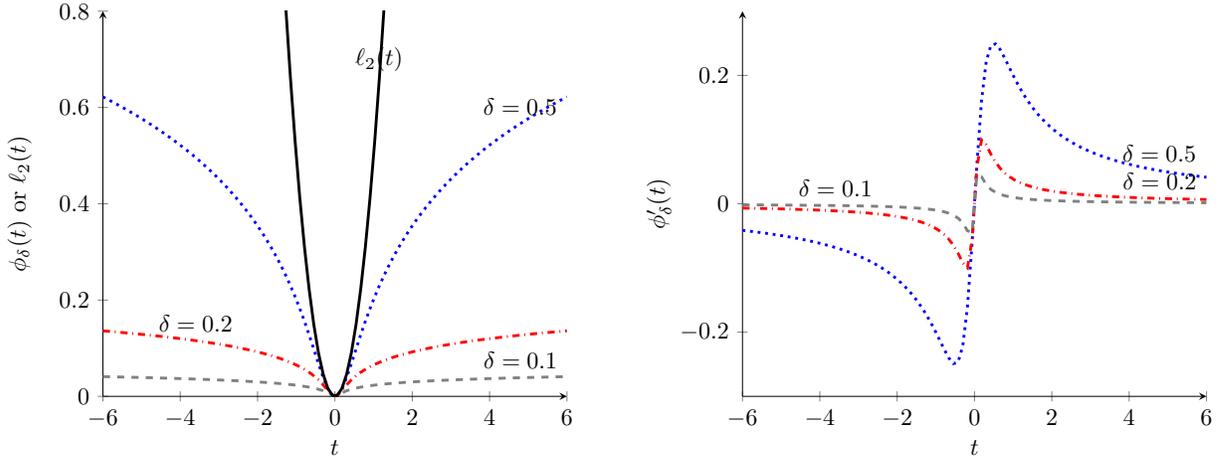
\begin{figure} 
	\centering

	\begin{tabular}{cc}
	\newcommand{\sigmaa}{0.5}
\newcommand{\sigmac}{0.2}
\newcommand{\sigmab}{0.1}
\begin{tikzpicture}[scale=0.9,/pgf/declare function={
	f=\sigmaa^2/2*ln(1+ x^2/\sigmaa^2);
	g=\sigmab^2/2*ln(1+ x^2/\sigmab^2);
	h=\sigmac^2/2*ln(1+ x^2/\sigmac^2);
	ls = x^2/2;
}]
\begin{axis}[
domain=-6:6,
samples=100,
axis lines = left,
ymin=0,
ymax=0.8,
xlabel={$t$},
ylabel={$\phi_\delta(t) $ or $\ell_2(t)$}
]
\addplot[dotted,very thick,color=blue] {f}node[pos = 0.9, % pin=90:{$\sigma = \sigmab$},
anchor=south] {{\color{black} $\delta = \sigmaa$}};
\addplot[dashdotted,very thick,color=red] {h}node[pos = 0.2, % pin=90:{$\sigma = \sigmab$},
anchor=south] {{\color{black} $\delta = \sigmac$}};
\addplot[dashed,very thick,color=gray] {g} node[pos = 0.9, % pin=90:{$\sigma = \sigmab$},
anchor=south] {{\color{black} $\delta = \sigmab$}};
\addplot[ very thick,color=black] {ls} node[pos = 0.535, % pin=90:{$\sigma = \sigmab$},
anchor=south] {{\color{black} $\ell_2(t)$}};
\end{axis}
\end{tikzpicture} 
&
	\newcommand{\sigmaa}{0.5}
\newcommand{\sigmac}{0.2}
\newcommand{\sigmab}{0.1}
\begin{tikzpicture}[scale=0.9,/pgf/declare function={
	f=x/(1+x^2/\sigmaa^2);
	g=x/(1+x^2/\sigmab^2);
	h=x/(1+x^2/\sigmac^2);
}]
\begin{axis}[
domain=-6:6,
samples=100,
axis lines = left,
ymin=-0.3,
ymax=0.3,
xlabel={$t$},
ylabel={$\phi_\delta^\prime(t)$}
]
\addplot[dotted,very thick,color=blue] {f}node[pos = 0.9, % pin=90:{$\sigma = \sigmab$},
anchor=south] {{\color{black} $\delta = \sigmaa$}};
\addplot[dashdotted,very thick,color=red] {h}node[pos = 0.9, % pin=90:{$\sigma = \sigmab$},
anchor=south] {{\color{black} $\delta = \sigmac$}};
\addplot[dashed,very thick,color=gray] {g} node[pos = 0.2, % pin=90:{$\sigma = \sigmab$},
anchor=south] {{\color{black} $\delta = \sigmab$}};
%\addplot[ very thick,color=black] {ls} node[pos = 0.9, % pin=90:{$\sigma = \sigmab$},
%anchor=south] {{\color{black} LS}};
\end{axis}
\end{tikzpicture} 
\end{tabular}
 	\caption{\small Left: Plots of  $\phi_\delta(t)=
	\frac{\delta^2}{2}\log\bigxiaokuohao{1+ t^2/\delta^2}$ with   different $\delta$ values versus $\ell_2(t) = t^2/2$; Right: Plots of $\phi_\delta^\prime(t)=t/(1+t^2/\delta^2)$. 
	$\sigma = 0.1$ (the dashed curve), $\sigma = 0.2$
	(the dotted-dashed curve), and  $\sigma = 0.5$
	(the dotted curve); $\ell_2(t)$ (the solid curve).}
\label{fig:loss}
\end{figure}

 \begin{remark}
We discuss several differences between our model \eqref{prob:robust_orth_main} and some existing robust tensor models. In recent years, robust techniques have been incorporated into tensor decomposition/approximation/recovery/completion/PCA problems, where the $L_1$ loss function, namely, $\ell_1(t)=|t|$, is frequently employed to deliver   robustness. In general, such kind of models can be formulated as \cite{goldfarb2014robust}
\begin{equation}
\label{prob:l1_tensor_problem}
\min_{\mathcal X\in\T} \bignorm{\boldsymbol{ L}(\mathcal X)-\mathbf b }_1 + \lambda R(\mathcal X),
\end{equation}
where $\boldsymbol{ L}$ is a linear operator,  and $\mathbf b$ has the same size as $\boldsymbol{ L}(\mathcal X)$;  $R(\mathcal X)$ denotes a certain regularizer that controls the low-rankness of $\mathcal X$, such as the sum of nuclear norms of unfolding matrices of $\mathcal X$ \cite{signoretto2013learning}, and $\lambda>0$ is the regularization parameter.  A special case of \eqref{prob:l1_tensor_problem} is the robust tensor PCA, in which $\boldsymbol{ L}$ is the identity operator and $\mathbf b$ denotes the observed tensor \cite{goldfarb2014robust}. 
It is known that $L_1$ loss is more suitable for Laplacian noise; on the other hand, one sees that the derivative of $|t|$ does not tend to zero as $|t|\rightarrow+\infty$, meaning that it does not admit the redescending property, while it was pointed out in \cite{maronna1979bias} that the $L_1$ estimator might behave as bad as the $\ell_2(t)$ estimator in some cases. Comparing with the resulting tensor, \eqref{prob:l1_tensor_problem} yields a full tensor of size $n_1\times\cdots\times n_d$, while ours is compressed into a set of factor matrices, which   takes much less storage. Moreover, our orthonormality assumption on some factor matrices is more suitable for certain applications \cite{de2011short,de2010algebraic,sidiropoulos2000blind,de2013joint,sorensen2010parafac,shashua2001linear}.

In \cite{anandkumar2016tensor}, a robust tensor CP decomposition model has been considered. The differences are that the noise there are required to be sparse, and all the factor matrices are assumed to be columnwisely orthogonal, which are stringent. By using outlier detection techniques, \cite{pravdova2001robust} proposed a robust Tucker model. However, the underlying model cannot be   clearly formulated as an optimization problem, and the tensor model is different from ours. By using variational inference and     Kullback-Leibler  divergence, \cite{cheng2016probabilistic} devised a robust algorithm to find CP approximation with orthonormal factors, where the model and the solution method are quite different from ours.  In particular, the authors pointed out that their algorithm boils down to the alternating least squares \cite{sorensen2012canonical} in the absence of outliers. 
In a recent survey \cite{hong2020generalized}, various statistically motivated loss functions are incorporated into tensor CPD, in which the Huber's loss is considered. As Huber's loss can be regarded as a smoothed $\ell_1$ loss, it does not admit the redescending property as well. The orthonormality is   not taken into account in  \cite{hong2020generalized}.  Note that the idea of employing Cauchy loss has been considered in the authors' earlier work   \cite{yang2015robust}. Comparing with \eqref{prob:robust_orth_main}, the resulting tensor in \cite{yang2015robust} is   a full tensor and also does not take into account the orthonormality, and the solution method is also different.

\end{remark}

The remaining   problem is how to solve \eqref{prob:robust_orth_main} efficiently. For this purpose, several quantitative properties concerning the Cauchy loss for designing and analyzing the solution method are first introduced in the following subsection.    
 \subsection{Quantitative properties concerning $\phi_\delta(\cdot)$}
First, 
 we introduce the so-called half-quadratic (HQ) property of $\phi_\delta(\cdot)$, which turns the function into a weighted least squares problem and is crucial for designing the algorithm. Such a property of the Cauchy loss has appeared in the literature; see, e.g., \cite{he2010maximum,guan2017truncated}, in which the verification  is based on the utilization of conjugate functions. While   we present a very direct and concise proof.    Recall that we have defined $\log(0) = -\infty$.
 \begin{lemma}[Half-quadratic property]\label{lem:hq}
 Given $|t|<+\infty$, it holds that
 	\begin{align}\label{auxiliary_lemma::eq} 
 	\phi_\delta(t)= \min_{\omega\geq 0} \frac{\omega}{2} t^2 +\frac{\delta^2}{2}\varrho(\omega),
 	\end{align}
 	where 
 	$ %\label{eq:sec:alg:2)
 	\varrho(\omega)=\omega- \log(\omega)-1.
 	$
 	Moreover, the minimizer of \eqref{auxiliary_lemma::eq} is given by
 	\begin{equation}\label{eq:sec:alg:3}
 	\omega^*=\frac{\delta^2}{\delta^2+t^2}.
 	\end{equation}
 \end{lemma}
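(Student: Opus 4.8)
The plan is to treat the right-hand side of \eqref{auxiliary_lemma::eq} as a one-dimensional minimization over $\omega>0$ for each fixed $t$ with $|t|<+\infty$, and to show directly that the infimum is attained at the claimed $\omega^*$ with optimal value exactly $\phi_\delta(t)$. Concretely, I would fix $t$ and define $g(\omega):=\tfrac{\omega}{2}t^2+\tfrac{\delta^2}{2}\varrho(\omega)=\tfrac{\omega}{2}t^2+\tfrac{\delta^2}{2}(\omega-\log\omega-1)$ on the open interval $(0,+\infty)$, where $\varrho$ is finite and smooth, and then analyze $g$ by elementary calculus.

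First I would establish that $g$ is strictly convex and coercive, so that it possesses a unique global minimizer in the interior. Differentiating twice gives $g''(\omega)=\delta^2/(2\omega^2)>0$ for all $\omega>0$, whence strict convexity; moreover $g(\omega)\to+\infty$ both as $\omega\to0^+$ (driven by the $-\log\omega$ term) and as $\omega\to+\infty$ (driven by the linear term), so a minimizer exists and is unique. It therefore suffices to locate the single stationary point.

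Next I would solve $g'(\omega)=0$. Since $g'(\omega)=\tfrac{t^2}{2}+\tfrac{\delta^2}{2}\bigl(1-1/\omega\bigr)$, setting this to zero yields $\delta^2/\omega=\delta^2+t^2$, i.e. $\omega^*=\delta^2/(\delta^2+t^2)$, which is precisely \eqref{eq:sec:alg:3} and lies in $(0,+\infty)$ because $\delta>0$. Substituting $\omega^*$ back into $g$ then closes the argument: using $\omega^*-1=-t^2/(\delta^2+t^2)$, one sees that the quadratic term $\tfrac{\omega^*}{2}t^2=\tfrac{\delta^2 t^2}{2(\delta^2+t^2)}$ cancels exactly against $\tfrac{\delta^2}{2}(\omega^*-1)=-\tfrac{\delta^2 t^2}{2(\delta^2+t^2)}$, leaving $g(\omega^*)=-\tfrac{\delta^2}{2}\log\omega^*=\tfrac{\delta^2}{2}\log\bigl(1+t^2/\delta^2\bigr)=\phi_\delta(t)$.

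I do not anticipate a genuine obstacle here, since the argument is elementary convex calculus; the only point requiring a little care is justifying that the minimum is attained in the interior rather than at the boundary $\omega=0$ (where the convention $\log(0)=-\infty$ could otherwise cause confusion), which is handled by the coercivity and strict convexity established above. The pleasant cancellation in the final substitution is what makes the optimal value collapse to the Cauchy loss, and is the step I would write out explicitly.
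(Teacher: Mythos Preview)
Your proposal is correct and follows essentially the same route as the paper: both arguments fix $t$, use convexity of $g(\omega)$ to reduce the minimization to the first-order condition $g'(\omega)=0$, solve to obtain $\omega^*=\delta^2/(\delta^2+t^2)$, and then substitute back to recover $\phi_\delta(t)$. Your treatment is slightly more explicit about strict convexity and coercivity (the paper dispatches the boundary issue in one line by noting that since $|t|<+\infty$ the minimizer cannot occur at $\omega=0$), but the underlying argument is the same.
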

 \begin{proof}
First we  verify that \eqref{eq:sec:alg:3} is a minimizer of the right hand-side of \eqref{auxiliary_lemma::eq}.	Denote $g(\omega):= \omega t^2/2 + \delta^2\varrho(\omega)/2$.	As $\varrho(\cdot)$ is convex, it suffices to show that   $\omega^*$ in \eqref{eq:sec:alg:3} is a stationary point of  $\inf_{\omega\geq 0}g(\omega)$. Since $|t|<+\infty$, we see that the minimizer of $\inf_{\omega\geq 0}g(\omega)$ cannot occur at $\omega = 0$. Thus any stationry point of $\inf_{\omega\geq 0}g(\omega)$ meets 
 	$$g'(\omega) = 0 \Leftrightarrow t^2+\delta^2 -\frac{\delta^2}{\omega}=0,$$
 	and so $\omega = ( 1+ t^2/\delta^2)^{-1}$, which is exactly \eqref{eq:sec:alg:3}. Inserting this expression into \eqref{auxiliary_lemma::eq},  we get
 	\begin{eqnarray*}
 		2g(t) &=& \frac{\delta^2}{\delta^2+t^2}(t^2+\delta^2)  + \delta^2\log ( 1+ t^2/\delta^2  ) - \delta^2\\
 		&=&  \delta^2\log ( 1+ t^2/\delta^2  ),
 	\end{eqnarray*}
 	boiling down to the expression of $\phi_\delta(t)$. The proof is completed.
 \end{proof}
 Note that the HQ property has a very clear indication on robustness: Take $t=  \mathcal A_{i_1\cdots i_d}-\bigllbracket{\boldsymbol{ \sigma};U_j}_{i_1\cdots i_d}$ in Lemma \ref{lem:hq} as the noise; we see that the larger   the magnitude of $t$, the smaller the weight $\omega$ it yields, and so the corresponding $\phi_\delta(t)$ is less important in the objective $\boldsymbol{ \Phi}_\delta(\cdot)$ in \eqref{prob:robust_orth_main}.
 
The next two properties are helpful for convergence analysis. Recalling that $\phi_\delta'(t) = \frac{\delta^2t}{\delta^2+t^2} $, we have		
 \begin{proposition}[Lipschitz gradient] For any $t_1,t_2\in\mathbb R$ and $\delta>0$, it holds that
 	\label{prop:lipschitz_gradient} 
 	\[
 	\bigjueduizhi{ \frac{\delta^2t_1}{\delta^2 + t_1^2}  - \frac{\delta^2t_2}{\delta^2 + t_2^2} } \leq |t_1-t_2|.
 	\]
 \end{proposition}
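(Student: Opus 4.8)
The plan is to read the claimed inequality as the assertion that the derivative $f(t):=\phi_\delta'(t)=\frac{\delta^2 t}{\delta^2+t^2}$ is globally $1$-Lipschitz, and to prove Lipschitz continuity the standard way: bound $|f'(t)|\le 1$ uniformly in $t$, then invoke the mean value theorem. Since $\delta>0$, the denominator $\delta^2+t^2$ never vanishes, so $f$ is $C^1$ on all of $\mathbb R$ and the mean value theorem applies without qualification. First I would compute, via the quotient rule,
\[
f'(t)=\frac{\delta^2(\delta^2+t^2)-\delta^2 t\cdot 2t}{(\delta^2+t^2)^2}=\frac{\delta^2(\delta^2-t^2)}{(\delta^2+t^2)^2}.
\]

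The task then reduces to verifying $\bigjueduizhi{f'(t)}\le 1$, i.e. $\delta^2\,\bigjueduizhi{\delta^2-t^2}\le(\delta^2+t^2)^2$ for every $t\in\mathbb R$ and $\delta>0$. The key observation — and essentially the only content of the argument — is that for the two nonnegative quantities $\delta^2$ and $t^2$ one has $\bigjueduizhi{\delta^2-t^2}\le\delta^2+t^2$ (the absolute difference of nonnegative numbers never exceeds their sum), together with the trivial bound $\delta^2\le\delta^2+t^2$. Chaining these gives
\[
\delta^2\,\bigjueduizhi{\delta^2-t^2}\le\delta^2(\delta^2+t^2)\le(\delta^2+t^2)^2,
\]
which is exactly the desired bound. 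This route sidesteps any case split on the sign of $\delta^2-t^2$; alternatively one could treat the regimes $t^2\le\delta^2$ and $t^2>\delta^2$ separately, but each then requires expanding $(\delta^2+t^2)^2$ and exhibiting a manifestly nonnegative remainder ($3\delta^2t^2+t^4$ in the first case, $2\delta^4+\delta^2t^2+t^4$ in the second), so the two-step bound above is cleaner.

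Having established $\bigjueduizhi{f'(t)}\le 1$ pointwise, I would conclude as follows: for $t_1\neq t_2$ the mean value theorem yields a $\xi$ strictly between $t_1$ and $t_2$ with $f(t_1)-f(t_2)=f'(\xi)(t_1-t_2)$, whence $\bigjueduizhi{f(t_1)-f(t_2)}=\bigjueduizhi{f'(\xi)}\cdot|t_1-t_2|\le|t_1-t_2|$; the case $t_1=t_2$ is immediate. I do not anticipate a genuine obstacle, as the statement is elementary; the only point requiring a little care is the handling of the absolute value, which the two-step estimate above resolves uniformly.
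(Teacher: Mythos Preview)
Your proof is correct and essentially identical to the paper's: both compute $\phi_\delta''(t)=\dfrac{\delta^2(\delta^2-t^2)}{(\delta^2+t^2)^2}$, bound its absolute value by $1$ via the two-step estimate $|\delta^2-t^2|\le\delta^2+t^2$ and $\delta^2\le\delta^2+t^2$, and then invoke the mean value theorem.
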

\begin{proof}
By the mean value theorem, 	It suffices to show that
$
|\phi_\delta''(t)| \leq 1.
$ In fact,   
\[
|\phi^{\prime\prime}_\delta(t)| = \bigjueduizhi{ \frac{\delta^2 (\delta^2 - t^2)}{ (\delta^2 + t^2)^2  }    } \leq \bigjueduizhi{ \frac{ \delta^2 }{ \delta^2 + t^2 }   } \leq 1,
\]
and the result follows.
\end{proof}

\begin{proposition}[Liptshitz-like inequality] 	\label{prop:E2}
Let $t_1,t_2 \in\mathbb R$ be arbitrary, and let $\delta>0$. Then it holds  that
	$$ |e|:=\bigjueduizhi{ \delta^2 t_{1}  \left(   \frac{1}{ \delta^2 + t_1^2  } - \frac{1}{\delta^2 +  t_2^2  } \right) } \leq \bigjueduizhi{t_1-t_2}.$$ 
\end{proposition}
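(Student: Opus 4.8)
The plan is to reduce the claim to an elementary polynomial inequality by first putting the two reciprocals over a common denominator. Combining them gives $\frac{1}{\delta^2+t_1^2}-\frac{1}{\delta^2+t_2^2}=\frac{t_2^2-t_1^2}{(\delta^2+t_1^2)(\delta^2+t_2^2)}$, and factoring the numerator as a difference of squares $t_2^2-t_1^2=(t_2-t_1)(t_2+t_1)$ extracts the desired factor $|t_1-t_2|$. This exhibits $|e|=|t_1-t_2|\cdot F$ with $F:=\frac{\delta^2|t_1|\,|t_1+t_2|}{(\delta^2+t_1^2)(\delta^2+t_2^2)}$, so the proposition becomes equivalent to the single bound $F\le 1$.

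First I would dispose of the trivial case $t_1=t_2$, for which both sides vanish; hence assume $t_1\neq t_2$. To establish $F\le 1$ it suffices to prove $\delta^2|t_1|\,|t_1+t_2|\le (\delta^2+t_1^2)(\delta^2+t_2^2)$. Writing $a:=|t_1|$ and $b:=|t_2|$ and applying the triangle inequality $|t_1+t_2|\le a+b$ to the left-hand side, I reduce the task to the purely algebraic inequality $\delta^2 a^2+\delta^2 ab\le \delta^4+\delta^2 a^2+\delta^2 b^2+a^2b^2$, obtained after expanding the right-hand side.

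After canceling the common term $\delta^2 a^2$, the residual inequality is $\delta^2 ab\le \delta^4+\delta^2 b^2+a^2b^2$, which I would close by AM--GM: since $\delta^4+a^2b^2\ge 2\delta^2 ab\ge \delta^2 ab$ and $\delta^2 b^2\ge 0$, the right-hand side dominates the left. Chaining these estimates yields $|e|\le|t_1-t_2|$, as claimed.

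There is no genuine obstacle here; the only point requiring a little care is the very first algebraic step, where the factor $(t_1-t_2)$ must be cleanly pulled out of the difference of the two reciprocals so that the residual ratio $F$ is manifestly something one can bound by $1$. Everything afterward is a routine use of the triangle inequality together with AM--GM. One could alternatively relate $e$ to $\phi_\delta'(t_1)-\phi_\delta'(t_2)$ and invoke Proposition \ref{prop:lipschitz_gradient}, but the direct estimate above is shorter and entirely self-contained.
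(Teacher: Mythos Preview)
Your proof is correct and follows essentially the same route as the paper: both combine the two reciprocals, factor out $|t_1-t_2|$, apply the triangle inequality $|t_1+t_2|\le|t_1|+|t_2|$, and then verify that the residual coefficient is at most $1$ by an elementary algebraic bound. The only cosmetic difference is the final step: the paper writes $\varphi=\delta^4+\delta^2 t_2^2+|t_1t_2|(|t_1t_2|-\delta^2)$ and bounds the quadratic term below by $-\delta^4/4$, whereas you use AM--GM $\delta^4+a^2b^2\ge 2\delta^2 ab$; these are equivalent manipulations of the same inequality.
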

\begin{proof}
	It is clear that
	\begin{equation*}
 |e| =\left|   \sigma^2t_1 \frac{ (t_1 + t_2) (  t_1 - t_2  )   }{ (\sigma^2 + t_1^2  )  ( \sigma^2 + t_2^2   )  } \right| \leq  \sigma^2|t_1| \frac{ |t_1|+|t_2|  }{   (\sigma^2 + t_1^2  )  ( \sigma^2 + t_2^2   )          } \cdot | t_1-t_2 |.
	\end{equation*}
	To prove the above relation, it suffices to show the coefficient of $|t_1-t_2|$ is not greater than $1$, i.e.,
	\[
	\varphi(t_1,t_2)  := (\sigma^2 + t_1^2  )  ( \sigma^2 + t_2^2   ) - \sigma^2|t_1|( |t_1|+|t_2| ) \geq 0.
	\]
	In fact, 
	\begin{eqnarray*}
		\varphi(t_1,t_2)  &=& \sigma^4 + \sigma^2t_2^2 + |t_1t_2| ( |t_1t_2| - \sigma^2   )  \\
		&\geq& \sigma^4 + \sigma^2t_2^2 - \frac{\sigma^4}{4} \geq 0.
	\end{eqnarray*}
	Therefore, $|e|\leq |t_1-t_2|$, as desired. 
\end{proof}

\section{HQ-ADMM}\label{sec:alg}
By using Lemma \ref{lem:hq},  we equivalently rewrite the objective function $\boldsymbol{ \Phi}_\delta(\cdot)$ of \eqref{prob:robust_orth_main} in what follows. Specifically,
since $\boldsymbol{ \Phi}_\delta(\cdot)$ is the sum of $\phi_\delta(\cdot)$ functions,   taking $t =  \mathcal A_{i_1\cdots i_d} - \llbracket \boldsymbol{ \sigma}; U_j\rrbracket _{i_1\cdots i_d}$ in Lemma \ref{lem:hq}, we have
\begin{eqnarray}\label{eq:cost}
&&\boldsymbol{\Phi}_\delta( \mathcal A-\llbracket \boldsymbol{ \sigma}; U_j\rrbracket     ) \nonumber\\
=&&  \frac{1}{2} \min_{ \mathcal W_{i_1\cdots i_d} \geq 0} \sum^{n_1,\ldots,n_d}_{i_1=1,\ldots,i_d}  \bigzhongkuohao{ \mathcal W_{i_1\cdots i_d} \bigxiaokuohao{\mathcal A_{i_1\cdots i_d} - \llbracket \boldsymbol{ \sigma}; U_j\rrbracket _{i_1\cdots i_d}  }^2 + \delta^2\varrho(\mathcal W_{i_1\cdots i_d})  },  \label{eq:hq:1}
\end{eqnarray}
where we denote $\mathcal W = (\mathcal W_{i_1\cdots i_d})\in \T$ as a tensor variable. From Lemma \ref{lem:hq}, we see that the optimizer is $\mathcal W_{i_1\cdots i_d}= \delta^2\bigxiaokuohao{1+ \bigxiaokuohao{ \bigllbracket{\boldsymbol{ \sigma}; U_j}_{i_1\cdots i_d}-\mathcal A_{i_1\cdots i_d}   }^2/\delta^2  }^{-1}$. As explained in the paragraph below Lemma \ref{lem:hq},   $\mathcal W$ can be interpreted as weights to the problem.   From the expression of $\mathcal W$, we see that the larger the noise is, the smaller the weight gives to the problem. Such a mechanism helps   mitigate  heavy-tailed noise or outliers.

In view of \eqref{eq:cost}, a straightforward idea to solve \eqref{prob:robust_orth_main} (with the objective replaced by \eqref{eq:cost}) is to employing an alternating minimization method (AM)  by iteratively updating $\boldsymbol{ \sigma},U_j$, and $\mathcal W$. In fact,  applying AM to solve Cauchy loss-based problems have been considered in the literature; see, e,g., \cite{he2010maximum,guan2017truncated}. However, for our problem, this would result in that the subproblems related to $U_j$ do not have closed-form solutions. \cite{li2018robust} also applied AM to solve Cauchy loss-based problem; however, as their proposed model is unconstrained and the objective function is smooth, AM yields closed-form solutions to each subproblem. \cite{sciacchitano2015variational}   incorporated Cauchy loss into models for image processing. However, the problem is convexified by imposing a quadratic term, which results in that the Cauchy loss related subproblem admits a unique solution that can be analytically solved by solving a cubic equation. If the subproblem is nonconvex, then numerical methods have to be applied to solving the Cauchy loss related subproblem, as pointed out in \cite{sciacchitano2015variational}, which might result in inefficiency. For   other Cauchy loss based image processing problems, \cite{mei2018cauchy,meng2020cauchy,kim2020cauchy} proposed to use the conventional alternating direction method of multipliers (ADMM) directly. However, without noticing the HQ property, in the ADMM, solving the Cauchy loss related subproblem also does not admit a closed-form solution. As a result, solving such a subproblem  still requires an iterative method. \cite{yang2015robust} used a linearization technique, which ignored the HQ property. 

In view of the above limitations in dealing with Cauchy loss-based problems, in this section, by combining the HQ property and   the ADMM framework, we proposed a new method, termed as   HQ-ADMM, to solve our model \eqref{prob:robust_orth_main}. The advantage of HQ-ADMM is that all the subproblems involved in the algorithm admit closed-form solutions. In what follows, we derive our method step by step.

Note   that \eqref{eq:cost} is quadratic with respect to each $U_j$, leading to the following formulation 
\begin{equation*}
 \boldsymbol{\Phi}_\delta( \mathcal A-\llbracket \boldsymbol{ \sigma}; U_j\rrbracket     )  = \frac{1}{2} \min_{\mathcal W_{i_1\cdots i_d}\geq 0  }\normF{ \sqrt \mathcal W  \circledast \bigxiaokuohao{ \mathcal A -  \llbracket \boldsymbol{ \sigma}; U_j\rrbracket }   }^2 + \frac{\delta^2}{2}\sum^{n_1,\ldots,n_d}_{i_1=1,\ldots,i_d}   \varrho(\mathcal W_{i_1\cdots i_d})    ,  \label{eq:hq:2}
\end{equation*}
where $\sqrt \mathcal W = (\sqrt {\mathcal W_{i_1\cdots i_d}}) \in \T$ and `$\circledast$' denotes the Hadamard product. With this expression at hand,  by introducing a slack variable $\mathcal T \in \T$, we rewrite     \eqref{prob:robust_orth_main} as
\begin{equation} \label{prob:robust_orth_main_P}
\setlength\abovedisplayskip{4pt}
\setlength\abovedisplayshortskip{4pt}
\setlength\belowdisplayskip{4pt}
\setlength\belowdisplayshortskip{4pt}
\begin{split}
&\min_{\boldsymbol{ \sigma},U_j,\mathcal T,\mathcal W}~     \boldsymbol{\Phi}_\delta( \mathcal A-\mathcal T    ) = \frac{1}{2}  \normF{ \sqrt \mathcal W  \circledast\bigxiaokuohao{ \mathcal A -  \mathcal T }   }^2 + \frac{\delta^2}{2}\sum^{n_1,\ldots,n_d}_{i_1=1,\ldots,i_d}   \varrho(\mathcal W_{i_1\cdots i_d})   \\
&~~~~\,    {\rm s.t.}~  \mathcal T = \llbracket \boldsymbol{ \sigma}; U_j\rrbracket,~\mathcal W\geq 0,\\
&~~~~~~~~~  \mathbf u_{j,i}^\top \mathbf u_{j,i} =1, 1\leq j\leq d-t ,  1\leq i \leq R,\\
& ~~~~~~~~~ U_j^\top U_j = I,  d-t+1\leq j\leq d.
\end{split}
\end{equation}
By introducing a Lagrangian multiplier $\mathcal Y\in\T$, the augmented Lagrangian function of \eqref{prob:robust_orth_main_P} is given by
\begin{eqnarray}\label{eq:aug_L}
&&L_\tau(\boldsymbol{ \sigma},U_j,\mathcal T, \mathcal Y,\mathcal W  ):= \frac{1}{2}  \normF{ \sqrt \mathcal W  \circledast \bigxiaokuohao{ \mathcal A -  \mathcal T }   }^2 + \frac{\delta^2}{2}\sum^{n_1,\ldots,n_d}_{i_1=1,\ldots,i_d}    \varrho(\mathcal W_{i_1\cdots i_d}) \nonumber \\
&&~~~~~~~~~~~~~~~~~~~~~~~~~~~~ ~ - \innerprod{\mathcal Y}{\llbracket \boldsymbol{ \sigma}; U_j  \rrbracket - \mathcal T} + \frac{\tau}{2}\normF{\llbracket \boldsymbol{ \sigma}; U_j\rrbracket - \mathcal T   }^2 ,
\end{eqnarray}
where $\tau>0$.  In what follows, for notational convenience we denote     $(\mathcal Y + \tau\mathcal T) \bigotimes_{l\neq j}^d \mathbf u_{l,i} \in\mathbb R^{n_j}$ as the gradient of $\innerprod{\mathcal Y + \tau\mathcal T}{\bigotimes^d_{l=1}\mathbf u_{l,i}}$ with respect to $\mathbf u_{j,i}$.
Then, the last two terms of \eqref{eq:aug_L} can be rewritten as
\begin{eqnarray}
- \innerprod{\mathcal Y}{\llbracket \boldsymbol{ \sigma}; U_j  \rrbracket - \mathcal T} + \frac{\tau}{2}\normF{\llbracket \boldsymbol{ \sigma}; U_j\rrbracket - \mathcal T   }^2 &=& \innerprod{\mathcal Y}{\mathcal T} + \frac{\tau}{2}\normF{\mathcal T}^2 - \innerprod{\mathcal Y + \tau \mathcal T}{\tensorsigmaU} + \frac{\tau}{2}\boldsymbol{ \sigma}^\top\boldsymbol{ \sigma} \label{eq:hq:3}\\
&=&  \innerprod{\mathcal Y}{\mathcal T} +\frac{\tau}{2}\normF{\mathcal T}^2 - \innerprod{\mathcal Y + \tau \mathcal T}{\sum^R_{i=1}\sigma_i \bigotimes_{j=1}^d\mathbf u_{j,i}  } + \frac{\tau}{2}\boldsymbol{ \sigma}^\top\boldsymbol{ \sigma}\nonumber\\
&=& \innerprod{\mathcal Y}{\mathcal T} +\frac{\tau}{2}\normF{\mathcal T}^2 - \sum^R_{i=1}\sigma_i \innerprod{ (\mathcal Y + \tau\mathcal T) \bigotimes_{l\neq j}^d \mathbf u_{l,i}   }{\mathbf u_{j,i}} + \frac{\tau}{2}\boldsymbol{ \sigma}^\top\boldsymbol{ \sigma},\nonumber
\end{eqnarray}
where the first equality is due to Proposition \ref{prop:orth}. 

Before presenting the algorithm, we first derive the stationary point system. To this end, we further define Lagrangian multipliers $\eta_{j,i}\in\mathbb R$, $ 1\leq j\leq d-t$, $ 1\leq i\leq R$ attached to the constraints $\mathbf u_{j,i}^\top\mathbf u_{j,i}=1$, and $\Lambda_j\in\mathbb R^{R\times R}$, $ d-t+1\leq j\leq d$ attached to $U^\top_j U_j=I$, where $\Lambda_j$'s are symmetric matrices.  Denote 
\begin{eqnarray}
\label{eq:aug_LL}
&&\hat L_\tau(\boldsymbol{ \sigma},U_j,\mathcal T, \mathcal Y,\mathcal W  ) := L_\tau(\boldsymbol{ \sigma},U_1,\ldots,U_d,\mathcal T, \mathcal Y,\mathcal W  ) \nonumber\\
&&~~~~~~~~~~~~~~~~~~~~~~~~~~~ +  \sum_{j,i=1}^{d-t,R}\nolimits \eta_{j.i}\bigxiaokuohao{ \mathbf u_{j,i}^\top \mathbf u_{j,i} -1} + \sum^{d}_{j=d-t+1}\nolimits\innerprod{\Lambda_j}{ U_j^\top U_j - I}.
\end{eqnarray} 
 Thus taking derivative of $\hat L(\cdot)$ with respect to each $\mathbf u_{j,i}, 1\leq j\leq d-t, 1\leq i\leq R$ and noticing \eqref{eq:hq:3} yields
\begin{equation}
\label{eq:stat:u}
{\sigma_i (\mathcal Y + \tau\mathcal T)\bigotimes_{l\neq j}^d\nolimits\mathbf u_{l,i} =     \eta_{j,i} \mathbf u_{j,i}   }, ~  1  \leq j\leq d-t,~ 1\leq i\leq R.
\end{equation}
Since $\mathbf u_{j,i}$'s are normalized, we get $\eta_{j,i} = \sigma_i\innerprod{\mathcal Y + \tau\mathcal T}{\bigotimes^d_{l=1}\mathbf u_{l,i}}$. On the other hand, noticing the representation \eqref{eq:hq:3}, taking derivative of $\hat L(\cdot)$ with respect to $\boldsymbol{ \sigma}$ gives that $\sigma_i = \innerprod{\mathcal Y + \tau\mathcal T}{\bigotimes^d_{l=1}\mathbf u_{l,i}}/\tau$, which together with the expression of $\eta_{j,i}$ gives $\eta_{j,i} = \sigma_i^2\tau$; therefore, \eqref{eq:stat:u} is in fact as follow
	\begin{equation}
	\label{eq:stat:u_correct}
	{ (\mathcal Y + \tau\mathcal T)\bigotimes_{l\neq j}^d\nolimits\mathbf u_{l,i} =    \sigma_i\tau\mathbf u_{j,i}   }, ~  1  \leq j\leq d-t,~ 1\leq i\leq R.
	\end{equation}
	Next, taking derivative with respect to $\mathbf u_{j,i}, d-t+1\leq j\leq d, 1\leq i\leq R$ and noticing \eqref{eq:hq:3} gives
	\begin{equation}
	\label{eq:stat:u_orth}
		\sigma_i(\mathcal Y + \tau\mathcal T)\bigotimes_{l\neq j}^d\nolimits\mathbf u_{l,i} =   \sum^R_{r=1}  \nolimits(\Lambda_j)_{i,r} \mathbf u_{j,r }, ~  1  \leq j\leq d-t,~ 1\leq i\leq R.\\
	\end{equation}
Denote $\mathcal E\in\T$ as the all-one tensor;	taking derivative with respect to $\mathcal T$ and rearranging terms yields
	\begin{eqnarray}
	\label{eq:stat:P}
	&& \mathcal W\circledast\bigxiaokuohao{\mathcal T - \mathcal A} + \mathcal Y - \tau\bigxiaokuohao{\llbracket\boldsymbol{ \sigma};U_j\rrbracket -\mathcal T} = 0\nonumber\\
\Leftrightarrow&&	\bigxiaokuohao{ \mathcal W + \tau \mathcal E }\circledast \mathcal T =  \mathcal W\circledast \mathcal A - \mathcal Y + \tau\tensorsigmaU.
	\end{eqnarray}
	As a result, taking \eqref{eq:stat:u_correct}, \eqref{eq:stat:u_orth}, \eqref{eq:stat:P} and Lemma \ref{lem:hq} into account, any stationary point $\{\boldsymbol{ \sigma},U_j,\mathcal T,\mathcal Y,\mathcal W   \}$ satisfies the following system
\begin{equation}  \label{eq:kkt}  \footnotesize
\left\{  
\begin{array}{lr}
	{ (\mathcal Y + \tau\mathcal T)\bigotimes_{l\neq j}^d\mathbf u_{l,i} =    \sigma_i\tau\mathbf u_{j,i}   }, & ~  1  \leq j\leq d-t,~ 1\leq i\leq R,\\  
	\mathbf u_{j,i}^\top \mathbf u_{j,i} =1, &~  1  \leq j\leq d-t,~ 1\leq i\leq R,\\
		\sigma_i(\mathcal Y + \tau\mathcal T)\bigotimes_{l\neq j}^d\mathbf u_{l,i} =   \sum^R_{r=1}  (\Lambda_j)_{i,r} \mathbf u_{j,r }, & ~  1  \leq j\leq d-t,~ 1\leq i\leq R,\\
			U_j^\top U_j = I, &d-t+1 \leq j\leq d,\\
				\bigxiaokuohao{ \mathcal W + \tau \mathcal E }\circledast \mathcal T =  \mathcal W\circledast \mathcal A - \mathcal Y + \tau\llbracket\boldsymbol{ \sigma};U_j\rrbracket,\\
\llbracket\boldsymbol{ \sigma};U_j\rrbracket = \mathcal T,\\
\mathcal W_{i_1\cdots i_d} =  {\delta^{2}}\bigxiaokuohao{ \delta^2 + \bigxiaokuohao{\mathcal T_{i_1\cdots i_d} - \mathcal A_{i_1\cdots i_d}   }^2   }^{-1} .\\  
\end{array}  
\right.
\end{equation}

\paragraph{HQ-ADMM framework} Combining the HQ property and the ADMM, our HQ-ADMM computes the following subproblems at each iterate
\begin{equation*}  \label{alg:hq-admm_framework} \footnotesize
\left\{  
\begin{array}{lr}
 U^{k+1}_{j} \in \arg\min_{\|\mathbf u_{j,i}\|=1, 1\leq i\leq R} L_\tau(\boldsymbol{ \sigma}^k, U_1^{k+1},\ldots,U^{k+1}_{j-1},U_j,U^k_{j+1},\ldots,U^k_d,\mathcal T^k,\mathcal Y^k,\mathcal W^k   ) ,& 1 \leq j\leq d-t, \\
 U^{k+1}_j \in \arg\min_{ U_j^\top U_j = I }L_\tau(\boldsymbol{ \sigma}^k, U_1^{k+1},\ldots,U^{k+1}_{j-1},U_j,U^k_{j+1},\ldots,U^k_d,\mathcal T^k,\mathcal Y^k,\mathcal W^k   ) ,& d-t+1 \leq j\leq d,\\ 
\mathcal T^{k+1} = \arg\min_{\mathcal T} L_{\tau}(\boldsymbol{ \sigma}^{k},U^{k+1}_j,\mathcal T,\mathcal Y^k,\mathcal W^k ),\\
\mathcal Y^{k+1} = \mathcal Y^k - \tau\bigxiaokuohao{ \llbracket \boldsymbol{ \sigma}^{k}; U^{k+1}_j\rrbracket - \mathcal T^{k+1}  },\\
\boldsymbol{ \sigma}^{k+1} = \arg\min_{\boldsymbol{ \sigma}} L_\tau(\boldsymbol{ \sigma}, U^{k+1}_j, \mathcal T^{k+1},\mathcal Y^{k+1},\mathcal W^k), \\
\mathcal W^{k+1} = \arg\min_{\mathcal W} L_{\tau}(\boldsymbol{ \omega}^{k+1},U^{k+1}_j,\mathcal T^{k+1},\mathcal Y^{k+1},\mathcal W  ).
\end{array}  
\right.
\end{equation*}
Comparing with the standard ADMM framework,   HQ-ADMM involves an additional subproblem to update the weights $\mathcal W$.  In what follows, we present how to solve each subproblem. 

\subparagraph{$U_j$-subproblems}
For notational convenience, let $$\mathbf v_{j,i}^{k+1}:=(\mathcal Y^k+ \tau\mathcal T^k){\mathbf u_{1,i}^{k+1}\otimes \cdots\otimes \mathbf u_{j-1,i}^{k+1} \otimes \mathbf u_{j+1,i}^k \otimes\cdots\otimes \mathbf u_{d,i}^k  }$$ represent the gradient of $\innerprod{\mathcal Y^k + \tau\mathcal T^k}{\bigotimes^d_{l=1}\mathbf u_{l,i} }$ with respect to $\mathbf u_{j,i}$ at the point $(\mathbf u^{k+1}_{1,i},\ldots,\mathbf u^{k+1}_{j-1,i},\mathbf u^{k}_{j,i},\ldots,\mathbf u^k_{d,i}  )$. Denote $V^{k+1}_j := [\mathbf v^{k+1}_{j,1},\ldots,\mathbf v^{k+1}_{j,R} ] \in \mathbb R^{n_j\times R}$.

When $1\leq j\leq d-t$, from the definition of $L_\tau(\cdot)$, $\mathbf v_{j,i}$, and noticing the expression \eqref{eq:hq:3}, we have that each column of $U_j$ can be updated as follows
\begin{equation*}
\mathbf u^{k+1}_{j,i} = \arg\min_{\|\mathbf u_{j,i}\|=1 } -\sigma_i^k\innerprod{ \mathbf v^{k+1}_{j,i}   }{\mathbf u_{j,i}} ~\Leftrightarrow~ \mathbf u^{k+1}_{j,i} = \mathbf v^{k+1}_{j,i}/\|\mathbf v^{k+1}_{j,i}\|,~1\leq i\leq R.
\end{equation*}
However, for the convenience of convergence analysis we compute the following instead
\begin{equation}\label{prob:u_subproblem}
\mathbf u^{k+1}_{j,i} = \tilde{\mathbf v}^{k+1}_{j,i} /\|\tilde{\mathbf v}^{k+1}_{j,i}\|,~{\rm where}~\tilde{\mathbf v}_{j,i}^{k+1} = \sigma^k_i \mathbf v^{k+1}_{j,i} + \alpha \mathbf u^k_{j,i},~  1\leq i\leq R;
\end{equation}
here $\alpha>0$ is an arbitrary   constant. Note that $\mathbf u^{k+1}_{j,1},\ldots, \mathbf u^{k+1}_{j,R}$ can be updated simultaneously. 

When $d-t+1\leq j\leq d$, from the definition of $L_\tau$, $\mathbf v^{k+1}_{j,i}$, $V^{k+1}_j$ and recalling \eqref{eq:hq:3}, it follows
\begin{equation*}
U^{k+1}_j = \arg\min_{ U_j^\top U_j=I} - \sum^R_{i=1}\innerprod{ \sigma_i \mathbf v^{k+1}_{j,i}   }{\mathbf u_{j,i}} = \arg\max_{U_j^\top U_j=I} \innerprod{ V^{k+1}\cdot {\rm diag}(\boldsymbol{ \sigma}^k)  }{U_j},
\end{equation*}
where ${\rm diag}(\boldsymbol{ \sigma}) = {\rm diag}[\sigma_1,\ldots,\sigma_R ]\in\mathbb R^{R\times R}$ is a diagonal matrix. Similar to \eqref{prob:u_subproblem}, we in fact compute the following problem instead
\begin{equation}\label{prob:U_subproblem}
U^{k+1}_j =  \arg\max_{U_j^\top U_j=I} \innerprod{ \tilde{V}^{k+1}_j}{U_j},~{\rm where}~\tilde{V}^{k+1}_j = V^{k+1}\cdot {\rm diag}(\boldsymbol{ \sigma}^k) + \alpha U^k_j .
\end{equation}
The above problem is   to compute the polar decomposition of $\tilde{V}^{k+1}_j$, which admits a closed-form solution. Specifically, assume $\tilde{V}^{k+1}_j = P\Xi Q^\top$ is the SVD of $\tilde{V}^{k+1}_j$, where $P\in \mathbb R^{n_j\times R}$, $\Lambda,Q\in\mathbb R^{R\times R}$, $P^\top P = I$, $Q^\top Q = QQ^\top = I$, $\Xi = {\rm diag}(\lambda_1,\ldots, \lambda_R)$ with $\lambda_i$ being the singular value of $\tilde{V}^{k+1}_j$. Then $U^{k+1}_j = PQ^\top$. Moreover, letting $H^{k+1}_j:=Q\Xi Q^\top$. Then we   see that \eqref{prob:U_subproblem} gives the following relation
\begin{equation}\label{eq:U_subproblem_1}
\tilde{V}^{k+1}_j = U^{k+1}_j H^{k+1}_j.
\end{equation}

\subparagraph{$\mathcal T$-, $\boldsymbol{ \sigma}$- and $\mathcal W$-subproblems}
From 	\eqref{eq:stat:P}, we have that
\begin{equation}
\label{prob:P_subproblem}
\mathcal T^{k+1}_{i_1\cdots i_d} = \bigxiaokuohao{  \mathcal W^k_{i_1\cdots i_d}\mathcal A_{i_1\cdots i_d} - \mathcal Y^k_{i_1\cdots i_d}   + \tau   \bigllbracket{\boldsymbol{ \sigma}^k ; U^{k+1}_j}_{i_1\cdots i_d}     }/\bigxiaokuohao{ \mathcal W^{k}_{i_1\cdots i_d} + \tau    }.
\end{equation}
To compute $\boldsymbol{ \sigma}^{k+1}$, from the expression of \eqref{eq:hq:3} it is easily seen that
\begin{equation}
\label{prob:sigma_subproblem}
\sigma^{k+1}_i = (\mathcal Y^{k+1}+\tau \mathcal T^{k+1})\bigotimes^d_{j=1}\nolimits\mathbf u^{k+1}_{j,i} /\tau,~1\leq i\leq R.
\end{equation}
To compute $\mathcal W^{k+1}$, similar to \eqref{eq:kkt} we have
\begin{equation}\label{prob:W_subproblem}
\mathcal W^{k+1}_{i_1\cdots i_d} =  {\delta^{2}}\bigxiaokuohao{ \delta^2 + \bigxiaokuohao{\mathcal T^{k+1}_{i_1\cdots i_d} - \mathcal A_{i_1\cdots i_d}   }^2   }^{-1} .
\end{equation}

In summary, the HQ-ADMM is described in Algorithm \ref{alg:hq_admm}, where each subproblem admits a closed-form solution.
\begin{algorithm}[h] 
	\algsetup{linenosize=\tiny}
	\footnotesize
	\caption{HQ-ADMM for solving \eqref{prob:robust_orth_main}	\label{alg:hq_admm}}
	\begin{algorithmic}[1]
		\REQUIRE $U_j^0 = [ \mathbf u_{j,i}^0,\ldots,\mathbf u^0_{j,R}]$,  with $\|\mathbf u_{j,i}\|=1$, $1\leq j\leq d-t$, $1\leq i\leq R$; $(U^0)^\top U^0_j = I$, $d-t+1\leq j\leq d$;  $\boldsymbol{ \sigma}^0$,  $\mathcal T^0$, $\mathcal Y^0$, $\mathcal W^0$,  $\alpha>0$, $\tau>0$, $\delta>0$.
		\FOR{$k=0,1,\ldots,$}
		\STATE  Compute $\mathbf u^{k+1}_{j,i}$ via \eqref{prob:u_subproblem},~~~$1\leq j\leq d-t,1\leq i\leq R$
		\STATE  Compute $U^{k+1}_j$ via \eqref{prob:U_subproblem},~~~$d-t+1\leq j\leq d$ %\color{gray}\% polar decomposition of $\tilde{V}^{k+1}_j$\color{black}
		\STATE Compute $\mathcal T^{k+1}$ via \eqref{prob:P_subproblem},
		\STATE Compute $\mathcal Y^{k+1} = \mathcal Y^k - \tau\bigxiaokuohao{ \llbracket \boldsymbol{ \sigma}^{k}; U^{k+1}_j\rrbracket - \mathcal T^{k+1}  }$,
				\STATE  Compute $\boldsymbol{ \sigma}^{k+1}$ via \eqref{prob:sigma_subproblem},
		\STATE  Compute $\mathcal W^{k+1}$ via \eqref{prob:W_subproblem}.
		\ENDFOR
	\end{algorithmic}
\end{algorithm}

\begin{remark}
	\label{rmk:hq_admm}
1. HQ-ADMM can be applied to a more general form of \eqref{prob:robust_orth_main}. Specifically, consider the data-fitting term given by $\boldsymbol{ \Phi}_\delta\bigxiaokuohao{\boldsymbol{L}\bigxiaokuohao{ \bigllbracket{\boldsymbol{ \sigma};U_j}  }-\mathbf b}$, where $\boldsymbol{L}$ is a linear operator, and $\mathbf b$ denote the observed data of the same size as $\boldsymbol{ L}\bigxiaokuohao{\bigllbracket{\boldsymbol{ \sigma};U_j}  }$. When $\boldsymbol{L}$ represents the identity operator and $\mathbf b$ denotes $\mathcal A$, the data-fitting term boils down to the objective of \eqref{prob:robust_orth_main}. When  $\boldsymbol{ \Phi}_\delta\bigxiaokuohao{\boldsymbol{L}\bigxiaokuohao{ \bigllbracket{\boldsymbol{ \sigma};U_j}  }-\mathbf b} = \boldsymbol{ \Phi}_\delta(\boldsymbol{\Omega}\circledast\bigxiaokuohao{ \bigllbracket{\boldsymbol{ \sigma};U_j} -\mathcal A  }    )$, where $\boldsymbol{\Omega} \in\T$ is a given $0-1$ tensor with $\boldsymbol{\Omega}_{i_1\cdots i_d}=1$ if $\mathcal A_{i_1\cdots i_d}$ being observed while $\boldsymbol{\Omega}_{i_1\cdots i_d}=0$ if $\mathcal A_{i_1\cdots i_d}$ missing,  it can be used to deal with robust tensor approximation with incomplete data. When $\boldsymbol{L}$ is formed by a set of input data tensors, and each entry of $\mathbf b$ denotes the output score of the corresponding input data tensor, it is the objective of the (robust) tensor regression problem. To minimize $\boldsymbol{ \Phi}_\delta\bigxiaokuohao{\boldsymbol{L}\bigxiaokuohao{ \bigllbracket{\boldsymbol{ \sigma};U_j}  }-\mathbf b}$ over orthonormal constraints, similar to \eqref{prob:robust_orth_main_P}, one can also formulate the problem as
\begin{equation*}  
\setlength\abovedisplayskip{4pt}
\setlength\abovedisplayshortskip{4pt}
\setlength\belowdisplayskip{4pt}
\setlength\belowdisplayshortskip{4pt}
\begin{split}
&\min_{\boldsymbol{ \sigma},U_j,\mathcal T,\mathbf w}~     \boldsymbol{ \Phi}_\delta\bigxiaokuohao{\boldsymbol{L}\bigxiaokuohao{ \mathcal T  }-\mathbf b}= \frac{1}{2}  \normF{ \sqrt \mathbf w  \circledast\bigxiaokuohao{\boldsymbol{L}\bigxiaokuohao{ \mathcal T }-\mathbf b}  }^2 + \frac{\delta^2}{2}\sum_{i =1} \nolimits  \varrho(\mathbf w_{i})   \\
&~~~~    {\rm s.t.}~  \mathcal T = \llbracket \boldsymbol{ \sigma}; U_j\rrbracket,~\mathbf w\geq 0,\\
&~~~~~~~~~  \mathbf u_{j,i}^\top \mathbf u_{j,i} =1, 1\leq j\leq d-t ,  1\leq i \leq R,\\
& ~~~~~~~~~ U_j^\top U_j = I,  d-t+1\leq j\leq d,
\end{split}
\end{equation*}
where $\mathbf w$ is the same size as $\mathbf b$ defined similar to that in \eqref{eq:cost}. The framework of HQ-ADMM then applies as well. 
	
2.	The idea of combing HQ property and ADMM framework can also be extended to solve other Cauchy loss based problems  such as those studied in \cite{mei2018cauchy,kim2020cauchy}. Specifically, for problems of the form 
	\begin{equation*}\label{eq:tmp}
	\min_{\mathbf x} \boldsymbol{ \Phi}_\delta(L\mathbf x-\mathbf b) + R(\mathbf x),
	\end{equation*}
	where $L$ is a matrix, $\mathbf b$ is a vector of proper size,   one can also   convert it to 
		\[
	\min_{\mathbf x,\mathbf w} \bigfnorm{\sqrt{ \mathbf w }\circledast\bigxiaokuohao{ L\mathbf y-\mathbf b  }  }^2 + \sum_{i=1}\nolimits\varrho(\mathbf w_i) +R(\mathbf x),~{\rm s.t.}~ \mathbf x=\mathbf y,
	\]
with $\mathbf w$ defined similar to that in \eqref{eq:cost};	  an algorithm in the spirit of HQ-ADMM can be applied to solve it. 

	3. An alternative way to obtain closed-form solutions in ADMM for solving \eqref{prob:robust_orth_main} is to use a linearization technique. For example, one can apply a linearized ADMM to solve   the original problem \eqref{prob:robust_orth_main} instead of the equivalent form \eqref{prob:robust_orth_main_P}, in which one also replace $\bigllbracket{\boldsymbol{ \sigma}; U_j}$ by $\mathcal T$; however, to solve the $\mathcal T$-subproblem, i.e., $\min_{\mathcal T}\boldsymbol{ \Phi}_\delta(\mathcal T-\mathcal A) + \innerprod{\mathcal Y}{\mathcal T}+\tau/2\bigfnorm{\mathcal T - \bigllbracket{\boldsymbol{ \sigma};U_j}}^2  $,  which does not admit a closed-form solution, one   linearizes      $\boldsymbol{ \Phi}_\delta(\mathcal T-\mathcal A)$ and then imposes a proximal term. The issue is that by doing this, one does not fully explore the structure of the model, which may lead to inefficiency. On the other hand, extra effort has to be paid to find a suitable step-size for this linearized subproblem.
\end{remark}

\section{Convergence of HQ-ADMM} \label{sec:conv}This section establishes the convergence of HQ-ADMM. We note that to ensure the convergence, the only requirement is that $\tau\geq \sqrt{10}$. 
Throughout this section, to simplify the notations, we denote 
\[
\deltaUj{k+1}{k}:= U^{k+1}_j - U^k_j. 
\]
The definitions of $\deltaP{k+1}{k}$, $\deltaW{k+1}{k}$, and $\deltaY{k+1}{k}$ are analogous.
In addition, we define the following proximal augmented Lagrangian function
   $$\tilde L_\tau(    \boldsymbol{ \sigma},U_j,\mathcal T,\mathcal Y,\mathcal W,\mathcal T^\prime ):= L_\tau(\boldsymbol{ \sigma},U_j,\mathcal T,\mathcal Y,\mathcal W  ) + \frac{2 }{  \tau }\|\mathcal T - \mathcal T^\prime\|_F^2,$$
   which is needed to study  the diminishing property of the terms $\bigfnorm{\deltaUj{k+1}{k}}$ and $\bigfnorm{\deltaP{k+1}{k}}$.
 For convenience we also denote
	\begin{equation}\label{eq:tilde_L}
	\tilde L^{k+1,k}_\tau := \tilde L_\tau(  \boldsymbol{ \sigma}^{k+1},U_j^{k+1},\mathcal T^{k+1},\mathcal Y^{k+1},\mathcal W^{k+1},\mathcal T^{k}).
	\end{equation}

 We present the first main result  in the following, showing that the sequence generated by the algorithm is bounded, and every limit point of the sequence generated by HQ-ADMM is a stationary point. The proof is left to Section \ref{sec:proof_sub_conv}.
\begin{theorem}[Subsequential convergence]\label{th:sub_convergence}
	Let  $\{ \boldsymbol{ \sigma}^k, U^k_j,\mathcal T^k,\mathcal Y^k,\mathcal W^k \}$ be   generated by Algorithm \ref{alg:hq_admm} with $\tau\geq \sqrt{10}$ and $\alpha>0$. Then 
	\begin{enumerate}
		\item $\{ \boldsymbol{ \sigma}^k, U^k_j,\mathcal T^k,\mathcal Y^k,\mathcal W^k \}$  is bounded;
		\item 	the sequence $\{ \tilde L_\tau^{k+1,k} \}$ defined in \eqref{eq:tilde_L} is bounded, nonincreasing and convergent;
		\item 	 it holds that
		\begin{equation}\label{eq:thm:1}
		\sum_{k=1}^{\infty} \bigxiaokuohao{\sum^d_{j=1}\bigfnorm{ \deltaUj{k+1}{k}}^2 + \bigfnorm{\deltaP{k+1}{k}}^2}   <+\infty,
		\end{equation}
		and  
		\begin{equation}\label{eq:thm:2}
		\bignorm{ { \deltasigma{k+1}{k}}}  \rightarrow 0, ~\bigfnorm{\deltaW{k+1}{k}} \rightarrow 0, ~\bigfnorm{\llbracket \boldsymbol{ \sigma}^k; U_j^k\rrbracket - \mathcal T^k}    \rightarrow 0.
		\end{equation}  Moreover, every limit point  $\{ \boldsymbol{ \sigma}^*, U^*_j,\mathcal T^*,\mathcal Y^*,\mathcal W^*\}$  satisfies the optimality condition \eqref{eq:kkt}.  In particular, $\{\boldsymbol{ \sigma}^*,U^*_j \}$ is also a stationary point of the original problem \eqref{prob:robust_orth_main}. 
	\end{enumerate}
\end{theorem}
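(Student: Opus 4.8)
The plan is to use the proximal augmented Lagrangian $\tilde L_\tau^{k+1,k}$ of \eqref{eq:tilde_L} as a Lyapunov function: I would show it decreases by a controllable amount at every sweep, is bounded below, hence converges, and then read off the three conclusions. The cornerstone is a dual--primal identity. Substituting the dual update $\mathcal Y^{k+1}=\mathcal Y^k-\tau(\llbracket\boldsymbol{\sigma}^k;U_j^{k+1}\rrbracket-\mathcal T^{k+1})$ into the $\mathcal T$-optimality relation \eqref{eq:stat:P} and cancelling the $\tau\llbracket\boldsymbol{\sigma}^k;U_j^{k+1}\rrbracket$ terms, I obtain
\[
\mathcal Y^{k+1}=\mathcal W^k\circledast(\mathcal A-\mathcal T^{k+1}).
\]
Since $0<\mathcal W^k\le 1$ and $\sup_t|\phi_\delta'(t)|=\delta/2$, the triangle inequality gives the entrywise bound $|\mathcal Y^{k+1}|\le \delta/2+|\deltaP{k+1}{k}|$, which will drive both the lower bound and the boundedness argument. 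I would then collect the per-block changes of $L_\tau$ across one sweep. By Proposition \ref{prop:orth}, $\bigfnorm{\llbracket\boldsymbol{\sigma};U_j\rrbracket}^2=\bignorm{\boldsymbol{\sigma}}^2$ is independent of the $U_j$'s, so every $U_j$-subproblem \eqref{prob:u_subproblem}--\eqref{prob:U_subproblem} is linear in its block; combined with the proximal shift $\alpha U_j^k$ (resp.\ $\alpha\mathbf u_{j,i}^k$), optimality yields a decrease of at least $\tfrac{\alpha}{2}\bigfnorm{\deltaUj{k+1}{k}}^2$ per block. The $\mathcal T$- and $\boldsymbol{\sigma}$-subproblems are $\tau$-strongly convex, producing decreases of at least $\tfrac{\tau}{2}\bigfnorm{\deltaP{k+1}{k}}^2$ and $\tfrac{\tau}{2}\bignorm{\deltasigma{k+1}{k}}^2$; the $\mathcal W$-step only decreases $L_\tau$. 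The lone increase is the dual step, which raises $L_\tau$ by exactly $\tfrac1\tau\bigfnorm{\deltaY{k+1}{k}}^2$.

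The hard part, and the sole reason for the proximal term $\tfrac{2}{\tau}\|\mathcal T-\mathcal T'\|_F^2$, is taming this dual ascent: because the weight $\mathcal W^k$ depends on $\mathcal T^k$, the increment $\deltaY{k+1}{k}$ is \emph{not} controlled by $\deltaP{k+1}{k}$ alone. Using the identity above I would split
\[
\deltaY{k+1}{k}=-\mathcal W^k\circledast\deltaP{k+1}{k}-(\mathcal W^k-\mathcal W^{k-1})\circledast(\mathcal T^k-\mathcal A),
\]
bound the first term by $|\deltaP{k+1}{k}|$ via $\mathcal W^k\le 1$ and the second by $|\deltaP{k}{k-1}|$ via Proposition \ref{prop:E2}, obtaining $\bigfnorm{\deltaY{k+1}{k}}^2\le 2\bigfnorm{\deltaP{k+1}{k}}^2+2\bigfnorm{\deltaP{k}{k-1}}^2$. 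The lagging term $\bigfnorm{\deltaP{k}{k-1}}^2$ is exactly what the proximal augmentation absorbs: it telescopes against $\tfrac{2}{\tau}\bigfnorm{\deltaP{k+1}{k}}^2$, so that after collecting all blocks
\[
\tilde L_\tau^{k,k-1}-\tilde L_\tau^{k+1,k}\ \ge\ \tfrac{\alpha}{2}\sum_{j}\bigfnorm{\deltaUj{k+1}{k}}^2+\Big(\tfrac{\tau}{2}-\tfrac{c_0}{\tau}\Big)\bigfnorm{\deltaP{k+1}{k}}^2+\tfrac{\tau}{2}\bignorm{\deltasigma{k+1}{k}}^2,
\]
where $c_0$ is the absolute constant produced by the preceding inequalities; this middle coefficient is nonnegative precisely once $\tau\ge\sqrt{10}$, the threshold quoted in the theorem. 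This is where Propositions \ref{prop:lipschitz_gradient} and \ref{prop:E2}---the ``nice properties'' of the Cauchy loss---are indispensable, since for a general loss the dual difference need not be Lipschitz in $\mathcal T$.

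For the lower bound (feeding into Items~1 and 2), I would note that the $\mathcal W$-step makes the first two terms of $L_\tau^{k+1}$ equal to $\boldsymbol{\Phi}_\delta(\mathcal A-\mathcal T^{k+1})\ge0$ by Lemma \ref{lem:hq}; completing the square in the remaining two terms gives $L_\tau^{k+1}\ge-\tfrac1{2\tau}\bigfnorm{\mathcal Y^{k+1}}^2$, and the entrywise bound on $\mathcal Y^{k+1}$ then yields $\tilde L_\tau^{k+1,k}\ge-\delta^2 N/(4\tau)$ with $N=\prod_j n_j$. Thus $\{\tilde L_\tau^{k+1,k}\}$ is bounded, nonincreasing, and convergent. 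Boundedness of the iterates is immediate for $U_j^k,\mathbf u_{j,i}^k$ (compact Stiefel manifolds) and $\mathcal W^k\in(0,1]$; feeding $\tilde L_\tau^{k+1,k}\le\tilde L_\tau^{1,0}$ back into the same decomposition keeps $\boldsymbol{\Phi}_\delta(\mathcal A-\mathcal T^{k+1})$ bounded, and since $t\mapsto\log(1+t^2/\delta^2)$ is coercive this bounds $\mathcal T^k$, whence $\mathcal Y^k$ and, via \eqref{prob:sigma_subproblem}, $\boldsymbol{\sigma}^k$ are bounded as well.

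Finally, for Item~3 I would sum the sufficient-decrease inequality over $k$; convergence of $\{\tilde L_\tau^{k+1,k}\}$ makes the right-hand side summable, which is exactly \eqref{eq:thm:1} and also forces $\bignorm{\deltasigma{k+1}{k}}\to0$. Then $\bigfnorm{\deltaW{k+1}{k}}\to0$ follows from Lipschitz continuity of the weight map $t\mapsto\delta^2/(\delta^2+t^2)$ together with $\bigfnorm{\deltaP{k+1}{k}}\to0$, and the feasibility residual obeys $\llbracket\boldsymbol{\sigma}^k;U_j^{k+1}\rrbracket-\mathcal T^{k+1}=-\tfrac1\tau\deltaY{k+1}{k}\to0$; Proposition \ref{prop:orth} converts $\boldsymbol{\sigma}^k$ to $\boldsymbol{\sigma}^{k+1}$ at the cost of $\bignorm{\deltasigma{k+1}{k}}\to0$, giving the last limit in \eqref{eq:thm:2}. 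Along any convergent subsequence all successive differences vanish, so letting $k\to\infty$ in the closed-form relations \eqref{prob:u_subproblem}, \eqref{eq:U_subproblem_1}, \eqref{prob:P_subproblem}, \eqref{prob:sigma_subproblem}, \eqref{prob:W_subproblem} and in the vanishing residual reproduces the system \eqref{eq:kkt} at the limit $\{\boldsymbol{\sigma}^*,U_j^*,\mathcal T^*,\mathcal Y^*,\mathcal W^*\}$; eliminating $\mathcal T^*,\mathcal Y^*,\mathcal W^*$ through $\mathcal T^*=\llbracket\boldsymbol{\sigma}^*;U_j^*\rrbracket$ and $\mathcal Y^*=\mathcal W^*\circledast(\mathcal A-\mathcal T^*)$ recovers the first-order stationarity of \eqref{prob:robust_orth_main}.
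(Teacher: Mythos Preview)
Your proposal follows essentially the same route as the paper: the same proximal Lyapunov function $\tilde L_\tau^{k+1,k}$, the same dual identity $\mathcal Y^{k+1}=\mathcal W^k\circledast(\mathcal A-\mathcal T^{k+1})$, the same splitting of $\deltaY{k+1}{k}$ with Proposition~\ref{prop:E2} absorbing the lagging weight term, and the same passage to the limit in the closed-form optimality relations.

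The one genuine difference is how you establish the lower bound on $\tilde L_\tau^{k+1,k}$ (and hence boundedness). The paper does this via a quadratic expansion of $Q^{k-1}(\mathcal T)=\tfrac12\|\sqrt{\mathcal W^{k-1}}\circledast(\mathcal T-\mathcal A)\|_F^2$ around $\llbracket\boldsymbol{\sigma}^k;U_j^k\rrbracket$, obtaining $\tilde L_\tau^{k-1,k-2}\ge Q^{k-1}(\llbracket\boldsymbol{\sigma}^k;U_j^k\rrbracket)+\tfrac{\tau-1}{2}\|\llbracket\boldsymbol{\sigma}^k;U_j^k\rrbracket-\mathcal T^k\|^2+\cdots$, and then reads off boundedness of $\boldsymbol{\sigma}^k$ and $\mathcal T^k$ from strong convexity of this expression. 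Your argument is more direct: after the $\mathcal W$-step the data term collapses to $\boldsymbol{\Phi}_\delta(\mathcal A-\mathcal T^{k+1})\ge 0$, completing the square leaves $-\tfrac{1}{2\tau}\|\mathcal Y^{k+1}\|^2$, and the entrywise bound $|\mathcal Y^{k+1}|\le\delta/2+|\deltaP{k+1}{k}|$ (which uses $\sup_t|\phi_\delta'(t)|=\delta/2$) gives a uniform floor that the proximal term $\tfrac{2}{\tau}\|\deltaP{k+1}{k}\|^2$ absorbs. This is cleaner and also yields boundedness of $\mathcal T^k$ directly via coercivity of each $\phi_\delta$, whereas the paper's route additionally exploits strong convexity in $\boldsymbol{\sigma}$. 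Both arguments are correct; yours is shorter but relies on the specific pointwise bound $|\phi_\delta'|\le\delta/2$, while the paper's expansion would transfer more readily to other HQ losses.

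Two small points to tidy up: (i) the feasibility residual you obtain is $\llbracket\boldsymbol{\sigma}^k;U_j^{k+1}\rrbracket-\mathcal T^{k+1}\to 0$, and converting this to the stated $\llbracket\boldsymbol{\sigma}^k;U_j^k\rrbracket-\mathcal T^k\to 0$ also needs $\|\deltaUj{k+1}{k}\|,\|\deltaP{k+1}{k}\|\to 0$ (not only $\|\deltasigma{k+1}{k}\|\to 0$), which you have but did not mention; (ii) the extra $\tfrac{\tau}{2}\|\deltasigma{k+1}{k}\|^2$ decrease you extract from the $\boldsymbol{\sigma}$-step is correct but unused---the paper simply records that step as $\ge 0$ and obtains $\|\deltasigma{k+1}{k}\|\to 0$ afterward from boundedness and the closed form \eqref{prob:sigma_subproblem}.
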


Next, based on the Kurdyka-\L{}ojasiewicz (KL) property \cite{bolte2014proximal} which is widely used for proving the global convergence of nonconvex algorithms, we can show that the whole sequence converges to a single limit point. The proof is left to Sect. \ref{sec:global_conv}.
\begin{theorem}[Global convergence]\label{th:global_conv}
	Under the setting of Theorem \ref{th:sub_convergence}, the whole sequence of $\{U^k_j,\mathcal T^k \}$ converges to a single limit point, i.e.,
	\[
	\lim_{k\rightarrow\infty} U^k_j = U^*_j,~1\leq j\leq d,~~\lim_{k\rightarrow\infty} \mathcal T^k = \mathcal T^*.
	\]
\end{theorem}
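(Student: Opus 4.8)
The plan is to deploy the now-standard Kurdyka--\L ojasiewicz (KL) machinery for nonconvex descent methods \cite{bolte2014proximal}. That framework rests on three ingredients: a sufficient-decrease property of a suitable merit function, a relative-error (subgradient) bound controlling a stationarity measure by the size of the successive iterate gaps, and the KL property of the merit function at the relevant limit point. Theorem \ref{th:sub_convergence} already supplies the descent ingredient, since the proximal augmented Lagrangian values $\tilde L_\tau^{k+1,k}$ form a nonincreasing convergent sequence, and \eqref{eq:thm:1} gives the crucial summability $\sum_k ( \sum_j \bigfnorm{\deltaUj{k+1}{k}}^2 + \bigfnorm{\deltaP{k+1}{k}}^2 ) < +\infty$, whence $\bigfnorm{\deltaUj{k+1}{k}}\to 0$ and $\bigfnorm{\deltaP{k+1}{k}}\to 0$. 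Since boundedness of the whole sequence is also guaranteed, the remaining work is to verify the other two ingredients for the blocks $U_j$ and $\mathcal T$ and then run the standard summation argument.

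First I would establish the relative-error bound. Using the optimality characterizations of the subproblems --- the polar-decomposition relation \eqref{eq:U_subproblem_1} for $U_j$, the closed forms \eqref{prob:P_subproblem}, \eqref{prob:sigma_subproblem}, \eqref{prob:W_subproblem} for $\mathcal T$, $\boldsymbol{\sigma}$, $\mathcal W$, and the dual update for $\mathcal Y$ --- I would exhibit an explicit element of the limiting subdifferential of $\tilde L_\tau$ at the $(k+1)$-st iterate (equivalently, the residual of the stationarity system \eqref{eq:kkt}, including the normal-cone contributions of the Stiefel constraints) and bound its norm by a constant multiple of $\bigfnorm{\deltaUj{k+1}{k}}$, $\bigfnorm{\deltaP{k+1}{k}}$, and $\bigfnorm{\deltaP{k}{k-1}}$, the last term entering through the proximal correction $\tfrac{2}{\tau}\|\mathcal T - \mathcal T'\|_F^2$. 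Boundedness of the iterates from Theorem \ref{th:sub_convergence}, together with the Lipschitz-type estimates of Propositions \ref{prop:lipschitz_gradient} and \ref{prop:E2}, ensures all the coefficients arising here are finite and uniform in $k$.

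Next I would certify the KL property of $\tilde L_\tau$. The objective is built from the Cauchy-loss weights via $\varrho(\omega)=\omega-\log\omega-1$ together with polynomial Frobenius-norm and inner-product terms, while the feasible set is cut out by the polynomial equations of the Stiefel manifolds $\stmanifold{n_j}{1}$ and $\stmanifold{n_j}{R}$, so adjoining the indicators of these semialgebraic sets keeps us in a definable framework. Since every piece is definable in the o-minimal structure $\mathbb R_{\exp}$ (polynomials together with $\log$), the function $\tilde L_\tau$ is a KL function; hence at any limit point there is a desingularizing function $\varphi$ for which the KL inequality $\varphi'(\tilde L_\tau - \tilde L_\tau^{*})\,\mathrm{dist}(0,\partial \tilde L_\tau)\geq 1$ holds on a neighborhood.

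Finally I would combine the three ingredients in the usual way: the descent inequality bounds $\tilde L_\tau^{k,k-1}-\tilde L_\tau^{k+1,k}$ from below by the squared gaps, the subgradient bound controls $\mathrm{dist}(0,\partial\tilde L_\tau)$ from above by the gaps, and concavity of $\varphi$ converts their product into a telescoping estimate; summing over $k$ yields the finite-length conclusion $\sum_k ( \sum_j \bigfnorm{\deltaUj{k+1}{k}} + \bigfnorm{\deltaP{k+1}{k}} ) < +\infty$. Finite length makes $\{U^k_j\}$ and $\{\mathcal T^k\}$ Cauchy, hence convergent to the single limit point furnished by Theorem \ref{th:sub_convergence}. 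The main obstacle I anticipate is the relative-error step: because of the manifold constraints and the dual variable, one must produce a genuine subdifferential element of $\tilde L_\tau$ and absorb the multiplier terms $\eta_{j,i}$ and $\Lambda_j$ together with the dual increment $\bigfnorm{\deltaY{k+1}{k}}$ into bounds of the form (constant)$\times$(iterate gaps); controlling $\bigfnorm{\deltaY{k+1}{k}}$ through the primal gaps, via the $\mathcal T$- and $\boldsymbol{\sigma}$-optimality relations \eqref{prob:P_subproblem} and \eqref{prob:sigma_subproblem}, is the delicate part, and it is precisely why the stated conclusion is phrased for $U_j$ and $\mathcal T$ rather than for every block variable.
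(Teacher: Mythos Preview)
Your proposal is correct and follows essentially the same KL-based route as the paper: sufficient decrease from Lemma~\ref{lem:suff_decrease_1}, a relative-error (subgradient) bound of the form \eqref{eq:global_conv:12_dist_2_partial}, the KL property of the merit function, and then the standard concavity/telescoping argument to obtain finite length of $\{(U_j^k,\mathcal T^k)\}$. Your anticipation that the delicate step is producing a genuine subdifferential element on the Stiefel manifolds and absorbing $\bigfnorm{\deltaY{k+1}{k}}$ into primal gaps is exactly right; the paper handles the former by showing $\tilde{\mathbf v}_{j,i}^{k+1}\in\partial\iota_{\stmanifold{n_j}{1}}(\mathbf u_{j,i}^{k+1})$ (and likewise for $\tilde V_j^{k+1}$), and the latter via Lemma~\ref{lem:y_and_p}.

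The one noteworthy difference is in the choice of merit function. You propose to work with $\tilde L_\tau$ in all variables $(\boldsymbol{\sigma},U_j,\mathcal T,\mathcal Y,\mathcal W,\mathcal T')$ plus indicators, and to obtain the KL property through definability in $\mathbb R_{\exp}$ because of the $\log$ in $\varrho(\omega)=\omega-\log\omega-1$. The paper instead first \emph{eliminates} $\mathcal W$ and $\boldsymbol{\sigma}$ by substituting their closed forms \eqref{prob:W_subproblem} and \eqref{prob:sigma_subproblem}, which collapses the $\mathcal W$-block back to $\boldsymbol{\Phi}_\delta(\mathcal T-\mathcal A)$ and the $\boldsymbol{\sigma}$-block to a polynomial expression; the resulting reduced function $\tilde L_{\tau,\alpha}(U_j,\mathcal T,\mathcal Y,\mathcal T')$ is then analytic plus indicators of semialgebraic Stiefel manifolds, so KL follows without invoking o-minimality. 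This elimination also streamlines the neighborhood-control step in the induction (one only has to show that $\mathcal Y^k$ and $\mathcal T^{k-1}$ stay close to their limits, which the paper does via \eqref{eq:global:7}--\eqref{eq:global:9}), whereas in your setup you would additionally need to argue that $\boldsymbol{\sigma}^k$ and $\mathcal W^k$ remain in the KL neighborhood---harmless, since both are Lipschitz functions of $(U_j^k,\mathcal T^k,\mathcal Y^k)$ on the bounded iterate set, but an extra bookkeeping step. Either route works; the paper's elimination buys a slightly cleaner subgradient computation and a simpler KL verification.
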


\subsection{Proof of Theorem \ref{th:sub_convergence}}\label{sec:proof_sub_conv}
To prove the convergence of a nonconvex ADMM, a key step is to upper bound the size of the successive difference of the dual variables  by that of the primal variables \cite{li2015global,wang2019global,hong2016convergence}. For HQ-ADMM, the weight $\mathcal W^k$ brings barriers in the estimation of the upper bound. Fortunately, this can be overcome by realizing the relations between $\mathcal W^k$, $\mathcal T^k$ and $\mathcal T^{k-1}$ by using Lemma \ref{lem:y_and_p}. The resulting estimate is given as follows.
\begin{lemma}   \label{lem:y_and_p}
 It holds that
	\[
	\| \deltaY{k+1}{k}\|_F \leq  \|  \deltaP{k+1}{k}  \|_F + \| \deltaP{k}{k-1}  \|_F  .
	\]
\end{lemma}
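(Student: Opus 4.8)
The plan is to first derive a compact closed-form expression for the dual iterate that removes the primal term $\bigllbracket{\boldsymbol{\sigma}^k; U_j^{k+1}}$ entirely, and then to bound the successive difference $\deltaY{k+1}{k}$ entrywise. Since $\mathcal T^{k+1}$ minimizes $L_\tau(\boldsymbol{\sigma}^k, U_j^{k+1},\cdot,\mathcal Y^k,\mathcal W^k)$, it satisfies the $\mathcal T$-stationarity condition \eqref{eq:stat:P}, i.e. $\mathcal W^k\circledast(\mathcal T^{k+1}-\mathcal A)+\mathcal Y^k-\tau\bigxiaokuohao{\bigllbracket{\boldsymbol{\sigma}^k;U_j^{k+1}}-\mathcal T^{k+1}}=0$. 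The dual update reads $\mathcal Y^{k+1}=\mathcal Y^k-\tau\bigxiaokuohao{\bigllbracket{\boldsymbol{\sigma}^k;U_j^{k+1}}-\mathcal T^{k+1}}$, so the whole last term equals $\mathcal Y^{k+1}-\mathcal Y^k$. Substituting into the stationarity relation and cancelling $\mathcal Y^k$ gives the clean identity
\begin{equation*}
\mathcal Y^{k+1}=\mathcal W^k\circledast(\mathcal A-\mathcal T^{k+1}),
\end{equation*}
which is the crux of the argument: it expresses the dual variable purely through $\mathcal W$ and $\mathcal T$, removing $\boldsymbol{\sigma}$ and $U_j$ from the estimate.

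Next I would feed in the weight update. By \eqref{prob:W_subproblem} we have $\mathcal W^k_{i_1\cdots i_d}=\delta^2\bigxiaokuohao{\delta^2+(\mathcal T^k_{i_1\cdots i_d}-\mathcal A_{i_1\cdots i_d})^2}^{-1}$, and analogously $\mathcal Y^k=\mathcal W^{k-1}\circledast(\mathcal A-\mathcal T^k)$ with $\mathcal W^{k-1}$ built from $\mathcal T^{k-1}$. Fixing an index and abbreviating $s_{k+1}=\mathcal T^{k+1}_{i_1\cdots i_d}-\mathcal A_{i_1\cdots i_d}$ (and $s_k,s_{k-1}$ analogously), the corresponding entry of $\deltaY{k+1}{k}$ equals $-\delta^2 s_{k+1}/(\delta^2+s_k^2)+\delta^2 s_k/(\delta^2+s_{k-1}^2)$. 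Adding and subtracting $\delta^2 s_k/(\delta^2+s_k^2)$ splits this into
\begin{equation*}
\frac{\delta^2(s_k-s_{k+1})}{\delta^2+s_k^2}+\delta^2 s_k\bigxiaokuohao{\frac{1}{\delta^2+s_{k-1}^2}-\frac{1}{\delta^2+s_k^2}}.
\end{equation*}

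Finally I would bound the two pieces. The first is controlled by $\delta^2/(\delta^2+s_k^2)\leq 1$, giving $|s_k-s_{k+1}|=|\mathcal T^{k+1}_{i_1\cdots i_d}-\mathcal T^k_{i_1\cdots i_d}|$; the second is exactly the quantity treated in Proposition \ref{prop:E2} (with $t_1=s_k$, $t_2=s_{k-1}$), whose modulus is at most $|s_k-s_{k-1}|=|\mathcal T^k_{i_1\cdots i_d}-\mathcal T^{k-1}_{i_1\cdots i_d}|$. The triangle inequality then yields the entrywise estimate $|(\deltaY{k+1}{k})_{i_1\cdots i_d}|\leq|(\deltaP{k+1}{k})_{i_1\cdots i_d}|+|(\deltaP{k}{k-1})_{i_1\cdots i_d}|$, and squaring, summing over all indices, and invoking Minkowski's inequality upgrades this to $\bigfnorm{\deltaY{k+1}{k}}\leq\bigfnorm{\deltaP{k+1}{k}}+\bigfnorm{\deltaP{k}{k-1}}$, as claimed. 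The main obstacle, as the authors hint, is that the dual difference a priori couples the two weight tensors $\mathcal W^k,\mathcal W^{k-1}$ (nonlinear in $\mathcal T$) with the residuals $\mathcal A-\mathcal T$; the closed identity together with the add–subtract splitting is precisely what disentangles them, so that the nonlinearity is absorbed by Proposition \ref{prop:E2} while the remaining linear factor is damped by the bounded weight $\delta^2/(\delta^2+s_k^2)\leq1$.
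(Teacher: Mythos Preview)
Your proof is correct and follows essentially the same route as the paper's: the key identity $\mathcal Y^{k+1}=-\mathcal W^k\circledast(\mathcal T^{k+1}-\mathcal A)$ is derived the same way, and the add--subtract term you insert, $\delta^2 s_k/(\delta^2+s_k^2)$, is exactly the entry of $\mathcal W^k\circledast(\mathcal T^k-\mathcal A)$ that the paper adds and subtracts at the tensor level. The only cosmetic difference is that the paper applies the triangle inequality in Frobenius norm first and then bounds the two resulting pieces $E_1,E_2$, whereas you carry out the same split entrywise and aggregate via Minkowski at the end; the two arguments are equivalent.
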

\begin{proof} 
From \eqref{prob:P_subproblem}, we have
	\begin{equation*}  
				 \mathcal W^k\circledast\bigxiaokuohao{  \mathcal T^{k+1}-\mathcal A }   + \mathcal Y^k    -  \tau  \bigxiaokuohao{ \llbracket\boldsymbol{ \sigma}^{k};U^{k+1}_j\rrbracket - \mathcal T^{k+1}    }=0,
	\end{equation*}
	which together with the definition of $\mathcal Y^{k+1}$ yields
	\begin{equation}
	\label{eq:conv:1}
 \mathcal W^k\circledast\bigxiaokuohao{  \mathcal T^{k+1}-\mathcal A }   + 	\mathcal Y^{k+1} = 0.
	\end{equation}
Therefore we have
	\begin{small}
		\begin{eqnarray} 
		\|\deltaY{k+1}{k}  \| &=&  \bigfnorm{ \mathcal W^k\circledast\bigxiaokuohao{  \mathcal T^{k+1}-\mathcal A }  -     \mathcal W^{k-1}\circledast\bigxiaokuohao{  \mathcal T^{k}-\mathcal A }  }  \nonumber\\
		&= &\bigfnorm{ \mathcal W^k\circledast\bigxiaokuohao{  \mathcal T^{k+1}-\mathcal A } - \mathcal W^k\circledast\bigxiaokuohao{  \mathcal T^{k}-\mathcal A }  +  \mathcal W^k\circledast\bigxiaokuohao{  \mathcal T^{k}-\mathcal A }  -   \mathcal W^{k-1}\circledast\bigxiaokuohao{  \mathcal T^{k}-\mathcal A } }\nonumber\\
		&\leq &\bigfnorm{ \mathcal W^k\circledast\bigxiaokuohao{  \mathcal T^{k+1}-\mathcal T^k }  }  + \bigfnorm{ (\mathcal W^k - \mathcal W^{k-1})\circledast\bigxiaokuohao{  \mathcal T^{k}-\mathcal A }    }  \label{eq:conv:2}
		\end{eqnarray}
	\end{small}
\noindent Now  	denote   $E_1:= \bigfnorm{ \mathcal W^k\circledast\bigxiaokuohao{  \mathcal T^{k+1}-\mathcal T^k }   }  $ and $E_2:=\bigfnorm{ (\mathcal W^k - \mathcal W^{k-1})\circledast\bigxiaokuohao{  \mathcal T^{k}-\mathcal A }    } $. We first consider $E_1$. From the definition of $\mathcal W^k$, we easily see that $\mathcal W^k_{i_1\cdots i_d}\leq 1$ for each $i_1,\ldots,i_d$. Therefore,
	\begin{equation}\label{eq:conv:3}
	E_1 \leq \| \deltaP{k+1}{k}   \|.
	\end{equation}
	Next we focus on $E_2$. To simplify notations we denote $a_{i_1\cdots i_d}^k:= \mathcal T^k_{i_1\cdots i_d} - \mathcal A_{i_1\cdots i_d}$ and 
	$$e_{i_1\cdots i_d}:= \delta^2 a_{i_1\cdots i_d}^k \left(   \frac{1}{ \delta^2 + (a_{i_1\cdots i_d}^k)^2  } - \frac{1}{\delta^2 +  (a_{i_1\cdots i_d}^{k-1})^2  } \right) .$$ 
	Then  $E_2$ can be expressed as
	\begin{eqnarray*}
	E_2^2 &=&\sum^{n_1,\ldots,n_d}_{i_1=1,\ldots,i_d=1} \bigxiaokuohao{ \mathcal W_{i_1\cdots i_d}^{k+1} - \mathcal W_{i_1\cdots i_d}^k  }^2\bigxiaokuohao{ \mathcal T_{i_1\cdots i_d} - \mathcal A_{i_1\cdots i_d}  }^2    \\
	&=&  \sum^{n_1,\ldots,n_d}_{i_1=1,\ldots,i_d=1}  \delta^4 (a_{i_1\cdots i_d}^k)^2 \left(   \frac{1}{ \delta^2 + (a_{i_1\cdots i_d}^k)^2  } - \frac{1}{\delta^2 +  (a_{i_1\cdots i_d}^{k-1})^2  } \right)^2 = \sum^{n_1,\ldots,n_d}_{i_1=1,\ldots,i_d=1} e_{i_1\cdots i_d}^2.
	\end{eqnarray*}
It follows from Proposition \ref{prop:E2} that
	\begin{equation*}  
	\label{eq:conv:4}
	|e_{i_1\cdots i_d}| \leq | a_{i_1\cdots i_d}^k-a_{i_1\cdots i_d}^{k-1}  |,  \end{equation*}
	and so
	\begin{equation}\label{eq:conv:5}
	E_2 \leq \| \mathcal T^k-\mathcal A - (\mathcal T^{k-1} - \mathcal A)  \|_F = \|\deltaP{k}{k-1}\|_F.
	\end{equation}
	\eqref{eq:conv:2} combining with \eqref{eq:conv:3} and \eqref{eq:conv:5} yields the desired result.
\end{proof}

With Lemma \ref{lem:y_and_p}, we then establish a sufficiently decreasing inequality with respect to $\{\tilde{L}_\tau^{k+1,k}  \}$ defined in \eqref{eq:tilde_L}.
\begin{lemma}\label{lem:suff_decrease_1}
	Let the parameter $\tau$ satisfy $ \tau \geq \sqrt{10}$. Then there holds
%Denote
%\begin{equation}\label{eq:tilde_L}
%\tilde L^{k+1,k}_\tau := L_\tau(  \boldsymbol{ \sigma}^{k+1},U_j^{k+1},\mathcal T^{k+1},\mathcal Y^{k+1},\mathcal W^{k+1},\mathcal T^{k}).
%\end{equation}
	\begin{equation*}
	\label{eq:lemma:suff_decrease:7}
	\tilde L_\tau^{k,k-1} - \tilde L_\tau^{k+1,k} \geq \frac{\alpha}{2}\sum^d_{j=1}\bigfnorm{\deltaUj{k+1}{k} }^2+ \frac{1}{\tau} \bigfnorm{ \deltaP{k+1}{k} }^2,~\forall k,
	\end{equation*}
	where $\alpha>0$ is defined in \eqref{prob:u_subproblem} and \eqref{prob:U_subproblem}. 
\end{lemma}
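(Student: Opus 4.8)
The plan is to decompose the one-step change $L_\tau^{k}-L_\tau^{k+1}$ of the augmented Lagrangian (abbreviating $L_\tau^{k}:=L_\tau(\boldsymbol{\sigma}^k,U_j^k,\mathcal T^k,\mathcal Y^k,\mathcal W^k)$) into the separate contributions of the five block updates carried out in Algorithm \ref{alg:hq_admm}, and then to correct for the dual-ascent increase using Lemma \ref{lem:y_and_p} together with the proximal term built into $\tilde L_\tau$. Following the block Gauss--Seidel ordering $U_j\to\mathcal T\to\mathcal Y\to\boldsymbol{\sigma}\to\mathcal W$, I would insert the intermediate iterates and telescope. Each $U_j$-subproblem is, on the Stiefel constraint where $\|\mathbf u_{j,i}\|$ (resp. $\bigfnorm{U_j}$) is constant, equivalent to minimizing the $U_j$-block of $L_\tau$ regularized by a proximal term $\tfrac{\alpha}{2}\bigfnorm{\deltaUj{\cdot}{k}}^2$; this is exactly how the extra summand $\alpha\mathbf u_{j,i}^k$ (resp. $\alpha U_j^k$) enters \eqref{prob:u_subproblem}, \eqref{prob:U_subproblem}. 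Using the minimizing property and summing over $j=1,\dots,d$ through the intermediate states gives
\[
L_\tau(\boldsymbol{\sigma}^k,U_j^k,\mathcal T^k,\mathcal Y^k,\mathcal W^k)-L_\tau(\boldsymbol{\sigma}^k,U_j^{k+1},\mathcal T^k,\mathcal Y^k,\mathcal W^k)\ \geq\ \frac{\alpha}{2}\sum_{j=1}^d \bigfnorm{\deltaUj{k+1}{k}}^2.
\]

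Next, the $\mathcal T$-block of $L_\tau$ has diagonal Hessian with entries $\mathcal W^k_{i_1\cdots i_d}+\tau\geq\tau$, hence is $\tau$-strongly convex, and its exact minimizer $\mathcal T^{k+1}$ produces a decrease of at least $\tfrac{\tau}{2}\bigfnorm{\deltaP{k+1}{k}}^2$; similarly the $\boldsymbol{\sigma}$-block is $\tau$-strongly convex, giving a (nonnegative) decrease $\tfrac{\tau}{2}\bignorm{\deltasigma{k+1}{k}}^2$, while the $\mathcal W$-update, being the exact minimizer furnished by Lemma \ref{lem:hq}, cannot increase $L_\tau$. The sole increase comes from the dual step: substituting $\llbracket \boldsymbol{ \sigma}^k; U_j^{k+1}\rrbracket-\mathcal T^{k+1}=-\tfrac{1}{\tau}\deltaY{k+1}{k}$ into the $\mathcal Y$-dependent part shows that updating $\mathcal Y$ raises $L_\tau$ by exactly $\tfrac{1}{\tau}\bigfnorm{\deltaY{k+1}{k}}^2$. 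Collecting these contributions yields
\[
L_\tau^{k}-L_\tau^{k+1}\ \geq\ \frac{\alpha}{2}\sum_{j=1}^d\bigfnorm{\deltaUj{k+1}{k}}^2+\frac{\tau}{2}\bigfnorm{\deltaP{k+1}{k}}^2+\frac{\tau}{2}\bignorm{\deltasigma{k+1}{k}}^2-\frac{1}{\tau}\bigfnorm{\deltaY{k+1}{k}}^2.
\]

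The main obstacle, as always in nonconvex ADMM, is that the dual increase $\tfrac{1}{\tau}\bigfnorm{\deltaY{k+1}{k}}^2$ may overwhelm the primal decrease. I would defeat it with Lemma \ref{lem:y_and_p}, which bounds $\bigfnorm{\deltaY{k+1}{k}}$ by $\bigfnorm{\deltaP{k+1}{k}}+\bigfnorm{\deltaP{k}{k-1}}$; squaring and applying $(a+b)^2\leq 2a^2+2b^2$ gives $\tfrac{1}{\tau}\bigfnorm{\deltaY{k+1}{k}}^2\leq\tfrac{2}{\tau}\bigfnorm{\deltaP{k+1}{k}}^2+\tfrac{2}{\tau}\bigfnorm{\deltaP{k}{k-1}}^2$. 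The residual \emph{lagged} term $\tfrac{2}{\tau}\bigfnorm{\deltaP{k}{k-1}}^2$ is precisely what the proximal term $\tfrac{2}{\tau}\|\mathcal T-\mathcal T^\prime\|_F^2$ in $\tilde L_\tau$ is designed to telescope away: passing from $L_\tau^{k}-L_\tau^{k+1}$ to $\tilde L_\tau^{k,k-1}-\tilde L_\tau^{k+1,k}$ adds $\tfrac{2}{\tau}\bigxiaokuohao{\bigfnorm{\deltaP{k}{k-1}}^2-\bigfnorm{\deltaP{k+1}{k}}^2}$, which exactly cancels the lag and leaves the coefficient of $\bigfnorm{\deltaP{k+1}{k}}^2$ equal to $\tfrac{\tau}{2}-\tfrac{4}{\tau}$. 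Imposing $\tfrac{\tau}{2}-\tfrac{4}{\tau}\geq\tfrac{1}{\tau}$, i.e. $\tau^2\geq 10$, recovers the hypothesis $\tau\geq\sqrt{10}$ and, after dropping the nonnegative $\boldsymbol{\sigma}$-term, delivers the claimed inequality.
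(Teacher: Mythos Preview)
Your proposal is correct and follows essentially the same route as the paper: block-wise descent estimates for $U_j$ (via the proximal interpretation of the $\alpha$-perturbation on the Stiefel manifold), $\tau$-strong convexity in $\mathcal T$, nonincrease in $\boldsymbol{\sigma}$ and $\mathcal W$, the exact $\tfrac{1}{\tau}\bigfnorm{\deltaY{k+1}{k}}^2$ dual increase controlled by Lemma~\ref{lem:y_and_p}, and the telescoping with the $\tfrac{2}{\tau}\|\mathcal T-\mathcal T'\|_F^2$ term in $\tilde L_\tau$ to absorb the lagged $\deltaP{k}{k-1}$ contribution. The resulting coefficient condition $\tfrac{\tau}{2}-\tfrac{4}{\tau}\geq\tfrac{1}{\tau}$ is the same $\tau^2\geq 10$ as in the paper.
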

\begin{proof}
	We first consider the decrease caused by $U_j$. When $1\leq j \leq d-t$, according to the algorithm, the expression of $L_\tau(\cdot)$, that $\bignorm{\mathbf u^k_{j,i} }=1$ and recalling the definition of $\mathbf u^{k+1}_{j,i}$, $\mathbf v^{k+1}_{j,i}$ and $\tilde{\mathbf v}^{k+1}_{j,i}$, we have
	\begin{eqnarray}
&&	L_\tau(\boldsymbol{ \sigma}^k, U^{k+1}_1,\ldots,U^{k+1}_{j-1},U^k_j,\ldots,U^d_j  ,\mathcal T^k,\mathcal Y^k,\mathcal W^k ) - \nonumber\\
&&~~~~~~~~~~~~~~~ L_\tau(\boldsymbol{ \sigma}, U^{k+1}_1,\ldots, U^{k+1}_j,U^k_{j+1},\ldots,U^k_d,\mathcal T^k,\mathcal Y^k,\mathcal W^k) \nonumber\\
=&& \sum^R_{i=1}\innerprod{ \sigma^k_i \cdot (\mathcal Y^k + \tau\mathcal T^k) {\mathbf u_{1,i}^{k+1}\otimes \cdots\otimes \mathbf u_{j-1,i}^{k+1} \otimes \mathbf u_{j+1,i}^k \otimes\cdots\otimes \mathbf u_{d,i}^k  }   }{\mathbf u^{k+1}_{j,i} - \mathbf u^k_{j,i}} \nonumber\\
=&&\sum^R_{i=1}\innerprod{\sigma^k_i\cdot \mathbf v^{k+1}_{j,i}}{ \mathbf u^{k+1}_{j,i}  - \mathbf u^k_{j,i}   } \nonumber\\
=&& \sum^R_{i=1} \innerprod{ \sigma^k\cdot \mathbf v^{k+1} + \alpha \mathbf u^k_{j,i}   }{\mathbf u^{k+1}_{j,i} - \mathbf u^k_{j,i}   } + \frac{\alpha}{2}\bignorm{ \mathbf u^{k+1}_{j,i} - \mathbf u^k_{j,i}  }^2 \nonumber\\
=&& \sum^R_{i=1} \innerprod{\tilde{\mathbf v}^{k+1}_{j,i}}{ \frac{\tilde{\mathbf v}^{k+1}_{j,i}}{\bignorm{\tilde{\mathbf v}^{k+1}_{j,i}}   } -\mathbf u^k_{j,i}     }+  \frac{\alpha}{2}\bignorm{ \mathbf u^{k+1}_{j,i} - \mathbf u^k_{j,i}  }^2 \nonumber\\
 \geq &&   \frac{\alpha}{2}\sum^R_{i=1}\bignorm{ \mathbf u^{k+1}_{j,i} - \mathbf u^k_{j,i}  }^2  = \frac{\alpha}{2}\bignorm{\deltaUj{k+1}{k}}^2_F ,\label{eq:conv:6}
	\end{eqnarray}
	where the fourth equality follows from the definition of $\mathbf u^{k+1}_{j,i}$ and $\tilde{\mathbf v}^{k+1}_{j,i}$, and the inequality is due to $\bignorm{\mathbf v}\geq \innerprod{\mathbf v}{\mathbf u}$ for any vectors $\mathbf u,\mathbf v$ of the same size with $\bignorm{\mathbf u}=1$. 
	
	The decrease of $U_j$ when $d-t+1\leq j\leq d$ is similar.  From the definition of $V^{k+1}_j$,  It holds that
		\begin{eqnarray}
	&&	L_\tau(\boldsymbol{ \sigma}^k, U^{k+1}_1,\ldots,U^{k+1}_{j-1},U^k_j,\ldots,U^k_d  ,\mathcal T^k,\mathcal Y^k,\mathcal W^k ) - \nonumber\\
	&&~~~~~~~~~~~~~~~ L_\tau(\boldsymbol{ \sigma}^k, U^{k+1}_1,\ldots, U^{k+1}_j,U^k_{j+1},\ldots,U^k_d,\mathcal T^k,\mathcal Y^k,\mathcal W^k) \nonumber\\
	=&& \sum^R_{i=1}\innerprod{ \sigma^k_i \cdot (\mathcal Y^k + \tau\mathcal T^k) {\mathbf u_{1,i}^{k+1}\otimes \cdots\otimes \mathbf u_{j-1,i}^{k+1} \otimes \mathbf u_{j+1,i}^k \otimes\cdots\otimes \mathbf u_{d,i}^k  }   }{\mathbf u^{k+1}_{j,i} - \mathbf u^k_{j,i}} \nonumber\\
	=&&\innerprod{ V^{k+1}_j \cdot {\rm diag}(\boldsymbol{ \sigma}^k)     }{U^{k+1}_j - U^k_j} \nonumber\\
	=&&\innerprod{ V^{k+1}_j \cdot {\rm diag}(\boldsymbol{ \sigma}^k)  + \alpha U^k_j   }{U^{k+1}_j - U^k_j}  + \frac{\alpha}{2}\bigfnorm{ U^{k+1}_j - U^k_j }^2 \nonumber\\
	\geq &&  \frac{\alpha}{2}\bigfnorm{ \deltaUj{k+1}{k}}^2  ,\label{eq:conv:7}
	\end{eqnarray}
	where the inequality follows from the definition of $U^{k+1}_j$ in \eqref{prob:U_subproblem}.

To show the decrease of $\mathcal T$, note that $L_\tau(\cdot)$ is strongly convex with respect to $\mathcal T$, which we can easily deduce that
\begin{equation}
\label{eq:conv:9}
L_\tau(\boldsymbol{ \sigma}^{k},U^{k+1}_j, \mathcal T^k,\mathcal Y^k,\mathcal W^k) - L_\tau(\boldsymbol{ \sigma}^{k}, U^{k+1}_j,\mathcal T^{k+1},\mathcal Y^k,\mathcal W^k) \geq \frac{\tau}{2}\bignorm{\deltaP{k+1}{k}  }^2_F.
\end{equation}
Next, it follows from the definition of $\mathcal Y^{k+1}$ and Lemma \ref{lem:y_and_p} that 
\begin{eqnarray}
&& L_\tau(\boldsymbol{ \sigma}^{k}, U^{k+1}_j,\mathcal T^{k+1},\mathcal Y^k,\mathcal W^k) - L_\tau(\boldsymbol{ \sigma}^{k}, U^{k+1}_j,\mathcal T^{k+1},\mathcal Y^{k+1},\mathcal W^k) \nonumber\\
=&& \innerprod{ \mathcal Y^{k+1} - \mathcal Y^k  }{ \llbracket \boldsymbol{ \sigma}^{k}; U^{k+1}_j \rrbracket - \mathcal T^{k+1}  } \nonumber\\
=&& -\frac{1}{\tau}\bigfnorm{ \deltaY{k+1}{k}   }^2\nonumber\\
\geq &&  -\frac{2}{\tau}\bigxiaokuohao{ \bigfnorm{\deltaP{k+1}{k}}^2 + \bigfnorm{\deltaP{k}{k-1}}^2      }.   \label{eq:conv:10}
 \end{eqnarray}
 Finally, it follows from the definition of $\boldsymbol{ \sigma}^{k+1}$ and $\mathcal W^{k+1}$ that
 \begin{equation}
 \label{eq:conv:11}
 L_\tau(\boldsymbol{ \sigma}^{k}, U^{k+1}_j, \mathcal T^{k+1},\mathcal Y^{k+1},\mathcal W^k)  - L_\tau(\boldsymbol{ \sigma}^{k+1}, U^{k+1}_j, \mathcal T^{k+1},\mathcal Y^{k+1},\mathcal W^{k+1}) \geq 0.
 \end{equation}
 As a result, summing up \eqref{eq:conv:6}--\eqref{eq:conv:11} yields
\begin{eqnarray}
&& L_\tau(\boldsymbol{ \sigma}^k, U^k_j,\mathcal T^k,\mathcal Y^k,\mathcal W^k) - L_\tau(\boldsymbol{ \sigma}^{k+1}, U^{k+1}_j,\mathcal T^{k+1},\mathcal Y^{k+1},\mathcal W^{k+1}) \nonumber\\
\geq && \frac{\alpha}{2}\sum^d_{j=1}\bigfnorm{ \deltaUj{k+1}{k}}^2 + \bigxiaokuohao{ \frac{\tau}{2} - \frac{2}{\tau}   }\bigfnorm{ \deltaP{k+1}{k}}^2 - \frac{2}{\tau}\bigfnorm{\deltaP{k}{k-1}}^2 \nonumber\\
\geq && \frac{\alpha}{2}\sum^d_{j=1}\bigfnorm{ \deltaUj{k+1}{k}}^2 + \bigxiaokuohao{   \frac{2}{\tau}  + \frac{1}{\tau} }\bigfnorm{ \deltaP{k+1}{k}}^2 - \frac{2}{\tau}\bigfnorm{\deltaP{k}{k-1}}^2,\label{eq:conv:12}
\end{eqnarray}
where the last inequality follows from the range of $\tau$. Rearranging the terms of \eqref{eq:conv:12} gives the desired results. This completes the proof.
\end{proof}

We then show that $\tilde L_\tau^{k,k-1}$ defined in Lemma \ref{lem:suff_decrease_1} is lower bounded and the sequence $\{ \boldsymbol{ \sigma}^k,U^k_i,\mathcal T^k,\mathcal Y^k,\mathcal W^k \}$ is bounded as well. 
\begin{theorem}
	\label{prop:boundedness}
	Under the setting of Lemma \ref{lem:suff_decrease_1},	$\{\tilde L_\tau^{k,k-1}\}$   is bounded. The sequence $\{\boldsymbol{ \sigma}^k, U^k_j,\mathcal T^k,\mathcal Y^k,\mathcal W^k \}$   generated by Algorithm \ref{alg:hq_admm} is bounded as well.
\end{theorem}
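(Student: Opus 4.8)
The plan is to lean on Lemma~\ref{lem:suff_decrease_1}, which already shows $\{\tilde L_\tau^{k,k-1}\}$ is nonincreasing; hence the whole statement reduces to exhibiting a \emph{uniform lower bound} for $\tilde L_\tau^{k+1,k}$, after which boundedness of the iterates can be extracted block by block. Two of the five blocks need no work: the factors $U_j^k$ lie on the compact Stiefel manifolds $\stmanifold{n_j}{R}$ (or products of unit spheres when $1\le j\le d-t$), and by \eqref{prob:W_subproblem} every weight obeys $\mathcal W^k_{i_1\cdots i_d}=\delta^2/(\delta^2+(\mathcal T^k_{i_1\cdots i_d}-\mathcal A_{i_1\cdots i_d})^2)\in(0,1]$. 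So the genuine task is to control $\mathcal Y^k$, $\mathcal T^k$ and $\boldsymbol{\sigma}^k$.

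The main obstacle is the lower bound on $\tilde L_\tau^{k+1,k}$, and this is precisely where the extra proximal term $\tfrac{2}{\tau}\bigfnorm{\mathcal T-\mathcal T'}^2$ built into \eqref{eq:tilde_L} earns its keep. First I would rewrite $L_\tau(\boldsymbol{\sigma}^{k+1},U_j^{k+1},\mathcal T^{k+1},\mathcal Y^{k+1},\mathcal W^{k+1})$: since $\mathcal W^{k+1}$ is exactly the half-quadratic minimizer \eqref{prob:W_subproblem}, Lemma~\ref{lem:hq} collapses its first two terms into $\boldsymbol{\Phi}_\delta(\mathcal A-\mathcal T^{k+1})\ge 0$, while completing the square on the remaining multiplier and penalty terms leaves a remainder $\ge-\tfrac{1}{2\tau}\bigfnorm{\mathcal Y^{k+1}}^2$. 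It then suffices to bound $\mathcal Y^{k+1}$. Invoking \eqref{eq:conv:1}, namely $\mathcal Y^{k+1}=-\mathcal W^k\circledast(\mathcal T^{k+1}-\mathcal A)$, I would split $\mathcal T^{k+1}-\mathcal A=(\mathcal T^{k+1}-\mathcal T^k)+(\mathcal T^k-\mathcal A)$: because $\mathcal W^k\le 1$ entrywise the first piece is at most $\bigfnorm{\deltaP{k+1}{k}}$, while each entry of $\mathcal W^k\circledast(\mathcal T^k-\mathcal A)$ equals $\delta^2a/(\delta^2+a^2)$ with $a=\mathcal T^k_{i_1\cdots i_d}-\mathcal A_{i_1\cdots i_d}$, which is at most $\delta/2$ in magnitude since $\delta^2+a^2\ge 2\delta|a|$. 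This gives $\bigfnorm{\mathcal Y^{k+1}}\le\bigfnorm{\deltaP{k+1}{k}}+\tfrac{\delta}{2}\sqrt{n_1\cdots n_d}$, so that $-\tfrac{1}{2\tau}\bigfnorm{\mathcal Y^{k+1}}^2\ge-\tfrac{1}{\tau}\bigfnorm{\deltaP{k+1}{k}}^2-\tfrac{\delta^2 n_1\cdots n_d}{4\tau}$. Adding the proximal term $\tfrac{2}{\tau}\bigfnorm{\deltaP{k+1}{k}}^2$ cancels the harmful $-\tfrac{1}{\tau}\bigfnorm{\deltaP{k+1}{k}}^2$ and yields the uniform bound $\tilde L_\tau^{k+1,k}\ge-\delta^2 n_1\cdots n_d/(4\tau)$.

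With the lower bound secured, $\{\tilde L_\tau^{k,k-1}\}$ is bounded and, being monotone, convergent; telescoping Lemma~\ref{lem:suff_decrease_1} then gives $\sum_k\bigfnorm{\deltaP{k+1}{k}}^2<+\infty$, so $\bigfnorm{\deltaP{k+1}{k}}$ is bounded and the estimate above makes $\{\mathcal Y^k\}$ bounded. To finish, I would decompose $\tilde L_\tau^{k,k-1}=\boldsymbol{\Phi}_\delta(\mathcal A-\mathcal T^k)+\tfrac{\tau}{2}\bigfnorm{\llbracket\boldsymbol{\sigma}^k;U_j^k\rrbracket-\mathcal T^k-\mathcal Y^k/\tau}^2-\tfrac{1}{2\tau}\bigfnorm{\mathcal Y^k}^2+\tfrac{2}{\tau}\bigfnorm{\deltaP{k}{k-1}}^2$; dropping the two nonnegative quadratic terms shows $\boldsymbol{\Phi}_\delta(\mathcal A-\mathcal T^k)\le\tilde L_\tau^{k,k-1}+\tfrac{1}{2\tau}\bigfnorm{\mathcal Y^k}^2$ is bounded, and since each summand $\tfrac{\delta^2}{2}\log(1+(\mathcal T^k_{i_1\cdots i_d}-\mathcal A_{i_1\cdots i_d})^2/\delta^2)$ is coercive in $\mathcal T^k_{i_1\cdots i_d}$, this forces $\{\mathcal T^k\}$ to be bounded. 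The same identity bounds $\bigfnorm{\llbracket\boldsymbol{\sigma}^k;U_j^k\rrbracket-\mathcal T^k}$, hence $\bigfnorm{\llbracket\boldsymbol{\sigma}^k;U_j^k\rrbracket}$; by Proposition~\ref{prop:orth} the rank-one summands $\bigotimes_{j}\mathbf u_{j,i}^k$ are orthonormal, whence $\bignorm{\boldsymbol{\sigma}^k}=\bigfnorm{\llbracket\boldsymbol{\sigma}^k;U_j^k\rrbracket}$ is bounded. Gathering all five blocks proves the claim.
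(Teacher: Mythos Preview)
Your argument is correct and takes a genuinely different route from the paper's proof. The paper obtains the lower bound by working with the quadratic surrogate $Q^{k-1}(\mathcal T):=\tfrac{1}{2}\bigfnorm{\sqrt{\mathcal W^{k-1}}\circledast(\mathcal T-\mathcal A)}^2$: it uses the identity $\mathcal Y^k=-\nabla Q^{k-1}(\mathcal T^k)$ to rewrite the multiplier term, then a Taylor-type expansion of $Q^{k-1}$ around $\llbracket\boldsymbol{\sigma}^k;U^k_j\rrbracket$ to show that $\tilde L_\tau^{k-1,k-2}$ dominates $Q^{k-1}(\llbracket\boldsymbol{\sigma}^k;U^k_j\rrbracket)+\tfrac{\tau-1}{2}\bigfnorm{\llbracket\boldsymbol{\sigma}^k;U^k_j\rrbracket-\mathcal T^k}^2$, and finally extracts boundedness of $\boldsymbol{\sigma}^k$ and $\mathcal T^k$ via a strong-convexity argument in those variables. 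Your approach instead exploits a structural fact about the Cauchy loss that the paper does not use here: each entry of $\mathcal W^k\circledast(\mathcal T^k-\mathcal A)$ is exactly $\phi'_\delta(\mathcal T^k_{i_1\cdots i_d}-\mathcal A_{i_1\cdots i_d})$, and the Cauchy score is uniformly bounded by $\delta/2$. This immediately controls $\bigfnorm{\mathcal Y^{k+1}}$ up to a $\bigfnorm{\deltaP{k+1}{k}}$ term that the proximal correction absorbs, and yields the explicit bound $\tilde L_\tau^{k+1,k}\ge -\delta^2 n_1\cdots n_d/(4\tau)$. Your subsequent extraction of $\{\mathcal T^k\}$ via coercivity of $\boldsymbol{\Phi}_\delta$ and of $\{\boldsymbol{\sigma}^k\}$ via the orthonormality identity $\bignorm{\boldsymbol{\sigma}^k}=\bigfnorm{\llbracket\boldsymbol{\sigma}^k;U_j^k\rrbracket}$ is also more direct than the paper's strong-convexity route. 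The trade-off is that your argument leans specifically on the boundedness of the Cauchy influence function (a redescending property), so it would not transfer unchanged to a loss whose derivative is unbounded, whereas the paper's $Q^{k-1}$-based computation is closer in spirit to generic nonconvex-ADMM analyses and would adapt more readily to other half-quadratic surrogates.
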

\begin{proof}
Denote 
$Q^{k}(\mathcal T) := \frac{1}{2}\bigfnorm{ \sqrt{\mathcal W^k}\circledast\bigxiaokuohao{ \mathcal T-\mathcal A  }  }^2 $; thus we have $\nabla Q^k(\mathcal T) = \mathcal W^k\circledast\bigxiaokuohao{\mathcal T-\mathcal A}$, and it then follows from the quadraticity of $Q^k(\cdot)$ and $\mathcal Y^k = -\mathcal W^{k-1}\circledast\bigxiaokuohao{\mathcal T^k-\mathcal A}$ from \eqref{eq:conv:1} that 
\begin{eqnarray}\label{eq:lemma:bounded:1}
&&Q^{k-1}(\mathcal T^{k}) - Q^{k-1}(\llbracket \boldsymbol{ \sigma^k}; U^k_j \rrbracket)- \innerprod{ \mathcal Y^k  }{ \llbracket \boldsymbol{ \sigma^k}; U^k_j \rrbracket - \mathcal T^k   } \nonumber\\
=&& \innerprod{ \mathcal W^{k-1}\circledast\bigxiaokuohao{  \llbracket \boldsymbol{ \sigma^k}; U^k_j \rrbracket  - \mathcal A  }  }{\mathcal T^{k} - \llbracket \boldsymbol{ \sigma^k}; U^k_j \rrbracket }  \nonumber\\
&&~~~~~~~~~~~~+\frac{1}{2}\bigfnorm{ \sqrt{\mathcal W^{k-1}}\circledast\bigxiaokuohao{  \llbracket \boldsymbol{ \sigma^k}; U^k_j \rrbracket - \mathcal T^k   }   }^2- \innerprod{ \mathcal Y^k  }{ \llbracket \boldsymbol{ \sigma^k}; U^k_j \rrbracket - \mathcal T^k   }\nonumber\\
=&& \frac{1}{2}\bigfnorm{ \sqrt{\mathcal W^{k-1}}\circledast\bigxiaokuohao{  \llbracket \boldsymbol{ \sigma^k}; U^k_j \rrbracket - \mathcal T^k   }   }^2 \nonumber\\
&&~~~~~~~~~~~~+ \innerprod{ \mathcal W^{k-1}\circledast\bigxiaokuohao{ \llbracket \boldsymbol{ \sigma^k}; U^k_j \rrbracket  - \mathcal A   } - \mathcal W^{k-1}\circledast\bigxiaokuohao{ \mathcal T^k-\mathcal A   }   }{  \mathcal T^k - \llbracket \boldsymbol{ \sigma^k}; U^k_j \rrbracket     }\nonumber\\
=&& - \frac{1}{2}\bigfnorm{ \sqrt{\mathcal W^{k-1}}\circledast\bigxiaokuohao{  \llbracket \boldsymbol{ \sigma^k}; U^k_j \rrbracket - \mathcal T^k   }   }^2 \geq -\frac{1}{2}\bigfnorm{   \llbracket \boldsymbol{ \sigma^k}; U^k_j \rrbracket - \mathcal T^k  }^2,
\end{eqnarray}
where   the last inequality uses the fact that $0<\mathcal W^{k-1}_{i_1\cdots i_d} \leq 1$. 

Based on \eqref{eq:lemma:bounded:1}, it follows  from  the proof of Lemma \ref{lem:suff_decrease_1} that for any $k\geq 2$,  
\begin{eqnarray}
&& \tilde L_\tau^{k-1,k-2} = \tilde L_\tau(\boldsymbol{ \sigma}^{k-1},U^{k-1}_j,\mathcal T^{k-1},\mathcal Y^{k-1},\mathcal W^{k-1},\mathcal T^{k-2} )\geq 
\tilde L_\tau(\boldsymbol{ \sigma}^{k},U^{k}_j,\mathcal T^{k},\mathcal Y^{k},\mathcal W^{k-1},\mathcal T^{k-1}) \nonumber\\
&=& Q^{k-1}(\mathcal T^k) +  \frac{\delta^2}{2}\sum^{n_1,\ldots,n_d}_{i_1=1,\ldots,i_d=1} \varrho( \mathcal W^{k-1}_{i_1\cdots i_d} )   - \innerprod{\mathcal Y^k}{ \llbracket \boldsymbol{ \sigma^k}; U^k_j \rrbracket  - \mathcal T^k    }  + \frac{\tau}{2}\bigfnorm{ \llbracket \boldsymbol{ \sigma^k}; U^k_j \rrbracket  - \mathcal T^k  }^2 + \frac{2}{\tau}\bigfnorm{\deltaP{k}{k-1}}^2 \nonumber\\
&\geq & Q^{k-1}( \llbracket \boldsymbol{ \sigma^k}; U^k_j \rrbracket )  + \frac{\tau -1}{2}\bigfnorm{   \llbracket \boldsymbol{ \sigma^k}; U^k_j \rrbracket - \mathcal T^k  }^2  +  \frac{\delta^2}{2}\sum^{n_1,\ldots,n_d}_{i_1=1,\ldots,i_d=1} \varrho( \mathcal W^{k-1}_{i_1\cdots i_d} ) + \frac{2}{\tau}\bigfnorm{\deltaP{k}{k-1}}^2 \nonumber\\
&>& -\infty, \label{eq:lemma:bounded:2}
\end{eqnarray}
where the first inequality follows from \eqref{eq:lemma:bounded:1} and the last one is due to the range of $\tau$ and $\varrho(\cdot)\geq 0$.  Thus $\{ \tilde L_\tau^{k,k-1}  \}$ is a lower bounded sequence. This  together with Lemma \ref{lem:suff_decrease_1} shows that $\{ \tilde L_\tau^{k,k-1}  \}$  is bounded.  

We then show the boundedness of $\{\boldsymbol{ \sigma}^k,U^k_j,\mathcal T^k,\mathcal Y^k,\mathcal W^k  \}$. The boundedness of $\{U^k_j \}$ and $\{\mathcal W^k \}$ is obvious. Next, denote $g(\boldsymbol{ \sigma}^k)$ as the formulation in line 3 of \eqref{eq:lemma:bounded:2} with respect to $\boldsymbol{ \sigma}^k$.  Since by Proposition \ref{prop:orth}, namely, the orthonormality of $\bigotimes_{j=1}^d\mathbf u^k_{j,i}$,
$$\bigfnorm{\llbracket \boldsymbol{ \sigma}^k; U^k_j\rrbracket  -\mathcal T^k} = \bignorm{ \boldsymbol{ \sigma}^k }^2 - 2\innerprod{ \llbracket \boldsymbol{ \sigma}^k;U^k_j \rrbracket}{\mathcal T^k} + \bigfnorm{\mathcal T^k}^2,$$ 
while $Q^{k-1}(\llbracket \boldsymbol{ \sigma}^k; U^k_j\rrbracket)$ is convex with respect to $\boldsymbol{ \sigma}^k$, we see that $g(\boldsymbol{ \sigma}^k)$ is strongly convex with respect to $\boldsymbol{ \sigma}^k$. This together with the boundedness of $\{ \tilde L_\tau^{k,k-1 } \}$ and \eqref{eq:lemma:bounded:2} gives the boundedness of $\{\boldsymbol{ \sigma}^k \}$. Quite similarly we have that  $\{\mathcal T^k \}$ is bounded. Finally, the boundedness of $\{\mathcal Y^k \}$ follows from the expression of the $\mathcal T$-subproblem \eqref{prob:P_subproblem}.  As a result, the sequence $\{\boldsymbol{ \sigma}^k,U^k_j,\mathcal T^k,\mathcal Y^k,\mathcal W^k  \}$ is bounded. This completes the proof.
\end{proof}

\begin{proof}[Proof of Theorem \ref{th:sub_convergence}]
 Lemma \ref{lem:suff_decrease_1} in connection with Theorem \ref{prop:boundedness} yields points $1$, $2$, and \eqref{eq:thm:1}; \eqref{eq:thm:1}   together with Lemma \ref{lem:y_and_p} and the definition of $\mathcal Y^{k+1}$, $\boldsymbol{ \sigma}^{k+1}$ and $\mathcal W^{k+1}$ gives \eqref{eq:thm:2}. On the other hand, since the sequence is bounded, limit points exist. Assume that $\{ \boldsymbol{ \sigma}^*, U^*_j,\mathcal T^*,\mathcal Y^*,\mathcal W^*\}$ is a limit point with
	\[
	\lim_{l\rightarrow \infty}  \{ \boldsymbol{ \sigma}^{k_l}, U^{k_l}_j,\mathcal T^{k_l},\mathcal Y^{k_l},\mathcal W^{k_l}\}= \{ \boldsymbol{ \sigma}^*, U^*_j,\mathcal T^*,\mathcal Y^*,\mathcal W^*\}.
	\]
	\eqref{eq:thm:1},  \eqref{eq:thm:2} then implies that
	\[
	\lim_{l\rightarrow \infty} \{ \boldsymbol{ \sigma}^{k_l+1}, U^{k_l+1}_j,\mathcal T^{k_l+1},\mathcal Y^{k_l+1},\mathcal W^{k_l+1}\} = \{ \boldsymbol{ \sigma}^*, U^*_j,\mathcal T^*,\mathcal Y^*,\mathcal W^*\}.
	\]
	Therefore, taking the limit into $l$ with respect to the $\mathbf u_{j,i}$-subproblem  \eqref{prob:u_subproblem} yields
	\begin{equation}\label{eq:sub_conv:1}
	\mathbf v^*_{j,i}\sigma^*_i + \alpha\mathbf u^*_{j,i} = \bignorm{\tilde{\mathbf v}^*_{j,i}}\mathbf u^*_{j,i},~1\leq j\leq d-t,~1\leq i\leq R.
	\end{equation}
	Multiplying both sides by $\mathbf u^*_{j,i}$ gilves
	\begin{equation}\label{eq:sub_conv:2}
	\bignorm{\tilde{\mathbf v}^*_{j,i}  } = \alpha + \sigma^*_i \innerprod{\mathbf v^*_{j,i}  }{\mathbf u^*_{j,i}} = \alpha + \sigma^*_i\innerprod{\mathcal Y^* + \tau\mathcal T^*}{\bigotimes_{j=1}^d\nolimits\mathbf u^*_{j,i}} = \alpha + \tau(\sigma^*_i)^2,
	\end{equation}
	where the second equality follows from the definition of $\mathbf v_{j,i}$ and the last one is given by passing the limit into the expression of $\sigma^{k_l+1}_i$ \eqref{prob:sigma_subproblem}. Thus \eqref{eq:sub_conv:1} together with \eqref{eq:sub_conv:2} gives 
	\begin{equation}
\label{eq:sub_conv:3}
(\mathcal Y^* + \tau\mathcal T^*)\bigotimes_{l\neq j}^d\nolimits\mathbf u_{l,i}^* =    \sigma_i^*\tau\mathbf u_{j,i}^*,   
		\end{equation}
i.e.,	the first equation of the stationary point system \eqref{eq:kkt}. 
	
	Taking the limit into $l$ with respect to the $U_j$-subproblem \eqref{prob:U_subproblem} and noticing the expression \eqref{eq:U_subproblem_1}, we get
	 \begin{equation*}
	 V^*_j{\rm diag}(\boldsymbol{ \sigma}^*) + \alpha U^*_j = U^*_j H^*_j,
	 \end{equation*}
	 where $H^*_j$ is a symmetric matrix. Writing it columnwisely, we obtain
	 \[
	 \sigma^*_i  \bigxiaokuohao{\mathcal Y^*+\tau\mathcal T^*  }\bigotimes_{l\neq j}^d\nolimits\mathbf u^*_{l,i} = \sum^R_{i=1}\nolimits( H^*_j )_{i,r}\mathbf u^*_{j,r} - \alpha \mathbf u^*_{j,i},~d-t+1\leq j\leq d,~1\leq i\leq R.
	 \]
	 Denoting $\Lambda^*_j:= H^*_j - \alpha I$, the above is exactly the third equality of \eqref{eq:kkt}. On the other hand, passing the limit into the expression of $\mathcal T^k$ \eqref{prob:P_subproblem} and $\mathcal W^k$ \eqref{prob:W_subproblem} respectively gives the $\mathcal T^*$- and $\mathcal W^*$- formulas in \eqref{eq:kkt}. Finally, the first expression of \eqref{eq:thm:2} yields $\mathcal T^* = \llbracket\boldsymbol{ \sigma}^*; U^*_j\rrbracket$. Taking the above pieces together, we have that $\{ \boldsymbol{ \sigma}^*, U^*_j,\mathcal T^*,\mathcal Y^*,\mathcal W^*\}$ satisfies the stationary point system \eqref{eq:kkt}. 
	 
	 Next, we show that $\{\boldsymbol{ \sigma}^*,U^*_j  \}$ is also a stationary point of problem \eqref{prob:robust_orth_main}. We define its Lagrangian function as $L_{\boldsymbol{ \Phi}} := \boldsymbol{ \Phi}_\delta(\boldsymbol{ \sigma}, U_j) -   \sum_{j,i=1}^{d-t,R}\nolimits \eta_{j.i}\bigxiaokuohao{ \mathbf u_{j,i}^\top \mathbf u_{j,i} -1} - \sum^{d}_{j=d-t+1}\nolimits\innerprod{\Lambda_j}{ U_j^\top U_j - I}$, similar to that in \eqref{eq:aug_LL}. Taking derivative yields
	 \begin{equation}\label{eq:sub_conv:4}
\footnotesize
\left\{  
\begin{array}{lr}
\partial_{\mathbf u_{j,i}} \boldsymbol{ \Phi}_\delta(\boldsymbol{ \sigma};U_j) = \eta_{j,i}\mathbf u_{j,i} \Leftrightarrow \mathcal W\circledast\bigxiaokuohao{\bigllbracket{\boldsymbol{ \sigma},U_j}-\mathcal A  } \cdot\sigma_i\bigotimes_{l\neq j}\mathbf u_{j,i}= \eta_{j,i}\mathbf u_{j,i} ,&1\leq j\leq d-t,1\leq i\leq R,\\
\partial_{\mathbf u_{j,i}}\boldsymbol{ \Phi}_\delta(\boldsymbol{ \sigma},U_j) = \sum^R_{r=1}(\Lambda_j)_{i,r}\mathbf u_{j,r}  \Leftrightarrow \mathcal W\circledast\bigxiaokuohao{\bigllbracket{\boldsymbol{ \sigma},U_j}-\mathcal A  } \cdot\sigma_i\bigotimes_{l\neq j}\mathbf u_{j,i}=\sum^R_{r=1}(\Lambda_j)_{i,r}\mathbf u_{j,r}, ,&d-t+1\leq j\leq d,1\leq i\leq R,\\
\partial_{\boldsymbol{ \sigma}}\boldsymbol{ \Phi}_\delta(\boldsymbol{ \sigma},U_j) =0\Leftrightarrow \innerprod{\mathcal W\circledast\bigxiaokuohao{ \bigllbracket{\boldsymbol{ \sigma}; U_j}  -\mathcal A}  }{\bigotimes_{j=1}^d\nolimits \mathbf u_{j,i}} = 0,~&1\leq i\leq R,
\end{array}  
\right.
	 \end{equation}
where $\mathcal W_{i_1\cdots i_d} = \delta^2\bigxiaokuohao{1+ \bigxiaokuohao{\bigllbracket{\boldsymbol{ \sigma}; U_j}_{i_1\cdots i_d}-\mathcal A_{i_1\cdots i_d}   }^2/\delta^2   }^{-1}$; multiplying $\mathbf u_{j,i}$ in both sides of the first equality above, and noticing the last equality, we get $\eta_{j,i}=0$.

Since $\mathcal T^* = \bigllbracket{\boldsymbol{ \sigma}^*;U^*_j}$, the $\mathcal T$-subproblem \eqref{prob:P_subproblem} also gives $\mathcal Y^* = \mathcal W^*\circledast\bigxiaokuohao{\bigllbracket{\boldsymbol{ \sigma}^*;U^*_j} -\mathcal A}$. This together with \eqref{eq:sub_conv:3} and that $\mathcal T^*\bigotimes_{l\neq j}^d\mathbf u^*_{j,i} = \bigllbracket{\boldsymbol{ \sigma}^*;U^*_j}\bigotimes_{l\neq j}^d\mathbf u^*_{j,i}=\sigma^*_i\mathbf u^*_{j,i}$  gives  $\mathcal W^*\circledast\bigxiaokuohao{\bigllbracket{\boldsymbol{ \sigma}^*,U_j^*}-\mathcal A  } \bigotimes_{l\neq j}\mathbf u_{j,i}^*=0$, i.e., the first equality of \eqref{eq:sub_conv:4} by noticing $\eta_{j,i}=0$. In a similar vein, we get that 
\[
\sigma^*_i\mathcal W^*\circledast\bigxiaokuohao{\bigllbracket{\boldsymbol{ \sigma}^*; U^*_j}-\mathcal A} = \sum^R_{i=1}\nolimits( H^*_j )_{i,r}\mathbf u^*_{j,r} - (\alpha+ \tau\sigma^*_i) \mathbf u^*_{j,i} .
\]
Taking $\Lambda_j := H_j^*- (\alpha+\tau\sigma^*_i)I$ gives the second relation of \eqref{eq:sub_conv:4}. The last equality follows directly from $\mathcal W^*\circledast\bigxiaokuohao{\bigllbracket{\boldsymbol{ \sigma}^*,U_j^*}-\mathcal A  } \bigotimes_{l\neq j}\mathbf u_{j,i}^*=0$.
	 The proof has been completed.
\end{proof}

\subsection{Proof of Theorem \ref{th:global_conv}}\label{sec:global_conv}
To prove Theorem \ref{th:global_conv},
  we first recall some definitions from nonsmooth analysis. Denote ${\rm dom}f:=\{\mathbf x\in\mathbb R^n\mid f(\mathbf x)<+\infty \}$. 
\begin{definition}[c.f. \cite{attouch2013convergence}]
	\label{def:subdifferential}
	For $\mathbf x\in {\rm dom}f$, the Fr\'{e}chet subdifferential, denoted as $\hat \partial f(\mathbf x)$, is the  set of vectors $\mathbf z\in\mathbb R^n$ satisfying 
	\begin{equation}\label{eq:f_subdiff}			      \setlength\abovedisplayskip{2pt}
	\setlength\abovedisplayshortskip{2pt}
	\setlength\belowdisplayskip{2pt}
	\setlength\belowdisplayshortskip{2pt}
	\liminf _{\mathbf y \neq \mathbf  x \atop\mathbf  y \rightarrow\mathbf  x} \frac{f(\mathbf  y)-f(\mathbf  x)-\langle \mathbf z, \mathbf  y-\mathbf  x\rangle}{\|\mathbf  x-\mathbf  y\|}\geq 0.
	\end{equation}
	The subdifferential of $f$ at $\mathbf x\in{\rm dom}f$, written $\partial f$,  is defined as
	\[ \setlength\abovedisplayskip{3pt}
	\setlength\abovedisplayshortskip{3pt}
	\setlength\belowdisplayskip{3pt}
	\setlength\belowdisplayshortskip{3pt}
	\partial f(\mathbf x):=\left\{\mathbf z \in \mathbb{R}^{n}: \exists \mathbf x^{k} \rightarrow \mathbf x, f\left(\mathbf x^{k}\right) \rightarrow f(\mathbf x), \mathbf z^{k} \in \hat{\partial} f\left(\mathbf x^{k}\right) \rightarrow \mathbf z\right\}.
	\]
\end{definition}
It is known that $\hat\partial f(\mathbf x)\subset \partial f(\mathbf x)$ for each $\mathbf x\in\mathbb R^n$ \cite{bolte2014proximal}.  
An extended-real-valued function is a function $f:\mathbb R^n\rightarrow [-\infty,\infty]$, which is proper if $f(\mathbf x)>-\infty$ for all $\mathbf x$ and $f(\mathbf x)<\infty$ for at least one $\mathbf x$. It is called closed if it is lower semi-continuous (l.s.c. for short).
%The domain of $  f$ is defined by ${\rm dom}f:=\{ \mathbf x\in\mathbb R^n\mid  f(\mathbf x)<\infty  \}$. 
The global convergence relies on the  the Kurdyka-\L{}ojasiewicz (KL) property given as follows:
\begin{definition}[KL property and KL function, c.f.  \cite{bolte2014proximal,attouch2013convergence}]\label{def:kl} A proper function $f$ is said to have the KL property at $\overline{\mathbf x}\in {\rm dom}\partial f :=\{\mathbf x\in\mathbb R^n\mid \partial f(\mathbf x)\neq\emptyset  \}$, if there exist $\bar\epsilon\in(0,\infty]$, a neighborhood $\mathcal N$ of $\overline{\mathbf x}$, and a continuous and concave function   $\psi: [0,\bar\epsilon) \rightarrow \mathbb R_+$ which is continuously differentiable on $(0,\bar\epsilon)$ with positive derivatives and $\psi(0)=0$, such that for all $\mathbf x\in \mathcal N$ satisfying $f( \overline{\mathbf x}) <f({\mathbf x}) < f(\overline{\mathbf x}) + \bar\epsilon $, it holds that
	\begin{equation*}\label{eq:kl_property}			      \setlength\abovedisplayskip{3pt}
	\setlength\abovedisplayshortskip{3pt}
	\setlength\belowdisplayskip{3pt}
	\setlength\belowdisplayshortskip{3pt}
	\psi^\prime(f(\mathbf x) - f(\overline{\mathbf x})  ){\rm dist}(0,\partial f(\mathbf x)) \geq 1,
	\end{equation*}
	where ${\rm dist}(0,\partial f(\mathbf x)) $ means the distance from the original point to the set  $\partial f(\mathbf x)$.	If a proper and l.s.c. function $f$ satisfies the KL property at each point of ${\rm dom}\partial f$, then $f$ is called a KL function. 
\end{definition}

We then simplify $\tilde L_\tau(\cdot)$ by eliminating the variables $\mathcal W$ and $\boldsymbol{ \sigma}$. First, from the definition of $\mathcal W^{k+1}$ and Lemma \ref{lem:hq}, we have that
\[
\bigfnorm{\sqrt \mathcal W^{k+1} \circledast\bigxiaokuohao{ \mathcal T^{k+1} -\mathcal A  } }^2 + \delta^2\sum^{n_1,\ldots,n_d}_{i_1=1,\ldots,i_d=1}\varrho(\mathcal W^{k+1}_{i_1\cdots i_d}) = \boldsymbol{ \Phi}_\delta(\mathcal T^{k+1}-\mathcal A ),
\]
where $\boldsymbol{ \Phi}_\delta(\cdot )$ is defined in \eqref{prob:robust_orth_main}. This eliminate the $\mathcal W$ from $\tilde L_\tau(\cdot)$. On the other hand, it follows from the definition of $\boldsymbol{ \sigma}^{k+1}$ \eqref{prob:sigma_subproblem} that
\begin{eqnarray*}
&&-\innerprod{\mathcal Y^{k+1}}{ \llbracket\boldsymbol{ \sigma}^{k+1};U^{k+1}_j \rrbracket - \mathcal T^{k+1}  } + \frac{\tau}{2}\bigfnorm{ \llbracket\boldsymbol{ \sigma}^{k+1};U^{k+1}_j \rrbracket - \mathcal T^{k+1}     }^2 \\
=&& \innerprod{\mathcal Y^{k+1}}{\mathcal T^{k+1}} + \frac{\tau}{2}\bigfnorm{\mathcal T^{k+1}}^2 - \frac{1}{2\tau}\sum^R_{i=1}\bigxiaokuohao{ \bigxiaokuohao{ \mathcal Y^{k+1}+ \tau\mathcal T^{k+1}}\bigotimes_{j=1}^d\mathbf u_{j,i}^{k+1}   }^2.
\end{eqnarray*}
Thus $\sigma$ is also eliminated. In what follows, whenever necessary,   $\sigma^k_i $  still represents the expression $  (\mathcal Y^k+\tau\mathcal T^k)\bigotimes_{j=1}^d\mathbf u^k_{j,i}/\tau$, but we only treat it as a representation instead of a variable.

Then $\tilde L_\tau(\boldsymbol{ \sigma}^{k+1}, U^{k+1}_j, \mathcal T^{k+1},\mathcal Y^{k+1},\mathcal W^{k+1} \mathcal T^k)$ can be equivalently written as
\begin{eqnarray*}
\label{eq:tilde_L_equiv}
&&\tilde L_\tau(U^{k+1}_j,\mathcal T^{k+1},\mathcal Y^{k+1},\mathcal T^k ) \\
=&& \frac{1}{2}\boldsymbol{ \Phi}_\delta(\mathcal T^{k+1} - \mathcal A) + \innerprod{\mathcal Y^{k+1}}{\mathcal T^{k+1}} + \frac{\tau}{2}\bigfnorm{\mathcal T^{k+1}}^2 - \frac{1}{2\tau}\sum^R_{i=1}\bigxiaokuohao{ \bigxiaokuohao{ \mathcal Y^{k+1}+ \tau\mathcal T^{k+1}}\bigotimes_{j=1}^d\mathbf u_{j,i}^{k+1}   }^2 + \frac{2}{\tau}\bigfnorm{  \deltaP{k+1}{k} }^2.\nonumber
\end{eqnarray*}
In addition, we denote
\begin{equation*}
\label{eq:tilde_L_alpha}
\tilde L_{\tau,\alpha}(U_j,\mathcal T,\mathcal Y,\mathcal T^\prime) := \tilde L_\tau(U_j,\mathcal T,\mathcal Y,\mathcal T^\prime) - \frac{\alpha}{2}\sum^d_{j=1}\bigfnorm{ U_j}^2 + \sum^{d-t,R}_{j=1,i=1} \iota_{ \stmanifold{n_j}{1} }(\mathbf u_{j,i}) + \sum^{d}_{j=d-t+1}\iota_{ \stmanifold{n_j}{R}}(U_j).
\end{equation*}
We can see that under the constraints of the optimization problem \eqref{prob:robust_orth_main}, $\tilde L_{\tau,\alpha}(\cdot) = \tilde L_\tau(\cdot) + c$ where $c$ is a constant. This together with Theorem \ref{th:sub_convergence} shows that $\{\tilde L_{\tau,\alpha}(U_j^{k+1},\mathcal T^{k+1},\mathcal Y^{k+1},\mathcal T^k),  \}$ is also a bounded and nonincreasing sequence. In addition, 
we have that $\tilde L_{\tau,\alpha}(\cdot)$ is a KL function.
\begin{proposition}
	\label{prop:KL_L_tau_alpha}
	$\tilde L_{\tau,\alpha}(U_j,\mathcal T,\mathcal Y,\mathcal T^\prime) $ defined above is a proper, l.s.c., and KL function.
\end{proposition}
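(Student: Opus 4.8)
The plan is to treat the three assertions in turn, with the KL property the only substantial point. Throughout I would work with the simplified form of $\tilde L_{\tau,\alpha}$ in the variables $(U_j,\mathcal T,\mathcal Y,\mathcal T^\prime)$ obtained after eliminating $\mathcal W$ and $\boldsymbol{ \sigma}$, so that the objective reads as the finite sum of $\tfrac12\boldsymbol{ \Phi}_\delta(\mathcal T-\mathcal A)$, the bilinear term $\innerprod{\mathcal Y}{\mathcal T}$, the quadratics $\tfrac{\tau}{2}\bigfnorm{\mathcal T}^2$, $\tfrac{2}{\tau}\bigfnorm{\mathcal T-\mathcal T^\prime}^2$ and $-\tfrac{\alpha}{2}\sum_j\bigfnorm{U_j}^2$, the term $-\tfrac{1}{2\tau}\sum_{i=1}^R\bigxiaokuohao{(\mathcal Y+\tau\mathcal T)\bigotimes_{j=1}^d\mathbf u_{j,i}}^2$, and the indicators $\iota_{\stmanifold{n_j}{1}}$, $\iota_{\stmanifold{n_j}{R}}$ of the Stiefel constraints.

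\emph{Properness and lower semicontinuity.} Here I would split the function into a continuous part and the sum of Stiefel indicators. The continuous part is everywhere finite and continuous: $\boldsymbol{ \Phi}_\delta$ is a finite sum of the smooth Cauchy losses $\phi_\delta$, while the remaining terms are polynomials in the entries of $U_j,\mathcal T,\mathcal Y,\mathcal T^\prime$. Each Stiefel manifold $\stmanifold{n_j}{R}$ is a nonempty closed set, so its indicator $\iota_{\stmanifold{n_j}{R}}$ is proper and l.s.c., equal to $0$ on the manifold and $+\infty$ off it, and similarly for $\iota_{\stmanifold{n_j}{1}}$. A finite sum of a continuous function with proper l.s.c. indicators is proper and l.s.c., giving the first two claims.

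\emph{KL property.} The crux is to exhibit $\tilde L_{\tau,\alpha}$ as definable in an o-minimal structure, after which the KL property at every point of its domain follows from the standard result that o-minimally definable functions are KL functions \cite{bolte2014proximal,attouch2013convergence}. The target structure is the real exponential field $\mathbb R_{\exp}$, which is known to be o-minimal. I would check definability ingredient by ingredient: all the polynomial terms are semi-algebraic, hence definable in any o-minimal structure; each Stiefel manifold $\{P\mid P^\top P=I\}$ is the zero set of a polynomial system, so it is semi-algebraic and its indicator is a semi-algebraic function; and each summand of $\boldsymbol{ \Phi}_\delta$ has the form $\tfrac{\delta^2}{2}\log\bigxiaokuohao{1+t^2/\delta^2}$, where $\log$ is definable in $\mathbb R_{\exp}$. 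Since finite sums of functions definable in a common o-minimal structure remain definable, $\tilde L_{\tau,\alpha}$ is definable in $\mathbb R_{\exp}$, and the KL property is immediate.

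\emph{Main obstacle.} The principal difficulty — and the reason one cannot close the argument by appealing to semi-algebraicity, as in many nonconvex ADMM analyses — is the transcendental logarithm inside $\boldsymbol{ \Phi}_\delta$. Resolving it requires passing from the semi-algebraic setting to the broader o-minimal framework and verifying that \emph{all} constituents (the polynomials, the semi-algebraic Stiefel indicators, and the log term) live in the \emph{same} o-minimal structure $\mathbb R_{\exp}$. Once this common definability is established, no explicit construction of the desingularizing function $\psi$ in Definition \ref{def:kl} is needed, and the proposition follows directly from the cited theorems.
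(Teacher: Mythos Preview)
Your proposal is correct and follows the same overall strategy as the paper: split $\tilde L_{\tau,\alpha}$ into a smooth part and the Stiefel indicator functions, and appeal to the KL machinery of \cite{bolte2014proximal,attouch2013convergence}. The paper handles the smooth part by observing it is real analytic (hence KL by the classical \L{}ojasiewicz inequality) and the indicators by semi-algebraicity, whereas you work uniformly in the o-minimal structure $\mathbb R_{\exp}$ and verify that the logarithm, the polynomial terms, and the semi-algebraic Stiefel indicators are all definable there. Your route is slightly more careful on one point: the sum of two KL functions need not be KL in general, so one really does need a \emph{common} o-minimal structure, which you make explicit via $\mathbb R_{\exp}$, while the paper leaves this implicit when passing from ``each piece is KL'' to ``the sum is KL''. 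Either argument is standard and both lead to the same conclusion with essentially the same effort.
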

\begin{proof}
It is clear that $\tilde L_{\tau,\alpha}(\cdot)$ is  proper and l.s.c.. Next,	
since the constrained sets in \eqref{prob:robust_orth_main} are all Stiefel manifolds,   items 2 and 6 of \cite[Example 2]{bolte2014proximal} tell 	 us that   they are   semi-algebraic sets, and their indicator functions are semi-algebraic functions. Therefore, the indicator functions are KL functions \cite[Theorem 3]{bolte2014proximal}. On the other hand,   the remaining part of $\tilde L_{\tau,\alpha}$ (besides the indicator functions) is an analytic function and hence it is KL \cite{bolte2014proximal}. As a result, 	$\tilde L_{\tau,\alpha}(U_j,\mathcal T,\mathcal Y,\mathcal T^\prime) $ is a KL function.
\end{proof}

In the sequel, we mainly rely on $\tilde L_{\tau,\alpha}(\cdot)$ to prove the global convergence. 
For convenience, we denote
\[
\tilde L_{\tau,\alpha}^{k+1,k} := \tilde L_{\tau,\alpha}(U_j^{k+1},\mathcal T^{k+1},\mathcal Y^{k+1},\mathcal T^k), ~{\rm and}~ \partial \tilde L_{\tau,\alpha}^{k+1,k}:= \partial \tilde L_{\tau,\alpha}(U_j^{k+1},\mathcal T^{k+1},\mathcal Y^{k+1},\mathcal T^k);
\]
denote $\deltaUjP{k+1}{k}:= (U_j^{k+1} , \mathcal T^{k+1}) - (U_j^{k},\mathcal T^k)$, and  
\begin{equation*}\label{eq:global_conv:20}
\bigfnorm{\deltaUjP{k+1}{k}  } := \sqrt{\sum^d_{j=1}\nolimits\bigfnorm{\deltaUj{k+1}{k}}^2 + \bigfnorm{\deltaP{k+1}{k}}^2    }.
\end{equation*}

\begin{lemma}
	\label{lem:bounded_partial_L} There exists a large enough constant $c_0>0$, such that
	\begin{equation}\label{eq:global_conv:12_dist_2_partial}
	{\rm dist}(\boldsymbol{0}, \partial \tildeLtaualpha{k+1}{k} ) \leq c_0\bigxiaokuohao{ \bigfnorm{ \deltaUjP{k+1}{k}} + \bigfnorm{\deltaUjP{k}{k-1}}     }.
	\end{equation}
\end{lemma}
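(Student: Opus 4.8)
The plan is to exhibit, at the point $(U_j^{k+1},\mathcal T^{k+1},\mathcal Y^{k+1},\mathcal T^k)$, one explicit element of each block of $\partial\tildeLtaualpha{k+1}{k}$ and to bound each such element by the successive differences $\deltaUjP{k+1}{k}$ and $\deltaUjP{k}{k-1}$. I would use the eliminated form of $\tilde L_{\tau,\alpha}(\cdot)$, in which $\mathcal W$ and $\boldsymbol\sigma$ are no longer variables and $\sigma_i^{k+1}$ is merely the representation $(\mathcal Y^{k+1}+\tau\mathcal T^{k+1})\bigotimes_{j=1}^d\mathbf u^{k+1}_{j,i}/\tau$. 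Since $\tilde L_{\tau,\alpha}$ is smooth in $(\mathcal T,\mathcal Y,\mathcal T^\prime)$ and differs from a smooth function only by the indicator functions $\iota_{\stmanifold{n_j}{1}}$ and $\iota_{\stmanifold{n_j}{R}}$ in the $U_j$ blocks, the subdifferential splits blockwise, the only nonsmooth contributions being the corresponding normal cones.

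First I would dispatch the two easy blocks. The $\mathcal T^\prime$-block is immediate: the only $\mathcal T^\prime$-dependent term is $\tfrac{2}{\tau}\bigfnorm{\mathcal T-\mathcal T^\prime}^2$, so $\nabla_{\mathcal T^\prime}\tildeLtaualpha{k+1}{k}=-\tfrac{4}{\tau}\deltaP{k+1}{k}$, already a successive difference. For the $\mathcal Y$-block, a direct computation gives $\nabla_{\mathcal Y}\tildeLtaualpha{k+1}{k}=\mathcal T^{k+1}-\bigllbracket{\boldsymbol\sigma^{k+1};U^{k+1}_j}$; the update rule for $\mathcal Y^{k+1}$ gives $\bigllbracket{\boldsymbol\sigma^{k};U^{k+1}_j}-\mathcal T^{k+1}=-\tfrac1\tau\deltaY{k+1}{k}$, so this block equals $\tfrac1\tau\deltaY{k+1}{k}+\bigllbracket{\boldsymbol\sigma^{k+1}-\boldsymbol\sigma^{k};U^{k+1}_j}$, whose norm is controlled by $\bigfnorm{\deltaY{k+1}{k}}$ and $\bignorm{\deltasigma{k+1}{k}}$ through the orthonormality of Proposition \ref{prop:orth}.

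Next I would handle the $U_j$-blocks, which carry the real bookkeeping. For $1\le j\le d-t$ the gradient of the smooth part with respect to $\mathbf u_{j,i}$ is $-\sigma_i^{k+1}(\mathcal Y^{k+1}+\tau\mathcal T^{k+1})\bigotimes_{l\neq j}\mathbf u^{k+1}_{l,i}-\alpha\mathbf u^{k+1}_{j,i}$, while the sphere normal cone at $\mathbf u^{k+1}_{j,i}$ is $\mathbb R\,\mathbf u^{k+1}_{j,i}$. Using the subproblem optimality \eqref{prob:u_subproblem}, namely $\sigma_i^k\mathbf v^{k+1}_{j,i}+\alpha\mathbf u^k_{j,i}=\bignorm{\tilde{\mathbf v}^{k+1}_{j,i}}\mathbf u^{k+1}_{j,i}$, I would choose the multiplier $\bignorm{\tilde{\mathbf v}^{k+1}_{j,i}}$ so that the leading $\mathbf u^{k+1}_{j,i}$ terms cancel; what remains is $-\alpha\deltauji{k+1}{k}$ plus the discrepancy $\sigma^k_i\mathbf v^{k+1}_{j,i}-\sigma^{k+1}_i(\mathcal Y^{k+1}+\tau\mathcal T^{k+1})\bigotimes_{l\neq j}\mathbf u^{k+1}_{l,i}$. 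This discrepancy compares the Gauss--Seidel factor $\mathbf v^{k+1}_{j,i}$ (built from $\mathcal Y^k,\mathcal T^k$ and the partially updated factors) with its fully updated counterpart; by multilinearity and the boundedness of all iterates (Theorem \ref{prop:boundedness}), it is bounded by a constant times $\bigfnorm{\deltaY{k+1}{k}}+\bigfnorm{\deltaP{k+1}{k}}+\sum_l\bigfnorm{U^{k+1}_l-U^k_l}+\bignorm{\deltasigma{k+1}{k}}$. The case $d-t+1\le j\le d$ is analogous, using \eqref{prob:U_subproblem}--\eqref{eq:U_subproblem_1} and choosing the symmetric multiplier $H^{k+1}_j$ for the Stiefel normal cone. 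Finally, the $\mathcal T$-block is treated by differentiating the data-fit term $\boldsymbol\Phi_\delta(\mathcal T-\mathcal A)$, the quadratic terms, and the proximal term, then invoking \eqref{prob:P_subproblem} and the identity $\mathcal Y^{k+1}=-\mathcal W^k\circledast(\mathcal T^{k+1}-\mathcal A)$: the residual reduces to a $(\mathcal W^{k+1}-\mathcal W^k)\circledast(\mathcal T^{k+1}-\mathcal A)$ gap controlled by $\bigfnorm{\deltaW{k+1}{k}}$, a $\bigllbracket{\boldsymbol\sigma^{k+1};U^{k+1}_j}-\mathcal T^{k+1}$ term already met in the $\mathcal Y$-block, and the harmless proximal contribution $\tfrac4\tau\deltaP{k+1}{k}$.

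The last step is to collect the four blocks and reduce everything to $\deltaUjP{k+1}{k}$ and $\deltaUjP{k}{k-1}$, and this is where the main obstacle lies: the blocks naturally produce the auxiliary differences $\deltaY{k+1}{k}$, $\deltasigma{k+1}{k}$, and $\deltaW{k+1}{k}$, none of which involves the variables $\{U_j,\mathcal T\}$ on the right-hand side of \eqref{eq:global_conv:12_dist_2_partial}. I would dispatch them as follows: $\bigfnorm{\deltaY{k+1}{k}}\le\bigfnorm{\deltaP{k+1}{k}}+\bigfnorm{\deltaP{k}{k-1}}$ by Lemma \ref{lem:y_and_p}; $\bigfnorm{\deltaW{k+1}{k}}\le\bigfnorm{\deltaP{k+1}{k}}$ since each $\mathcal W_{i_1\cdots i_d}$ is a globally Lipschitz function of $\mathcal T_{i_1\cdots i_d}-\mathcal A_{i_1\cdots i_d}$ in the spirit of Proposition \ref{prop:E2}; and, substituting the Lipschitz bound for $\deltaY{k+1}{k}$ into the multilinear estimate of $\sigma_i^{k+1}-\sigma_i^k$ read off from \eqref{prob:sigma_subproblem} together with boundedness, $\bignorm{\deltasigma{k+1}{k}}\le c\bigxiaokuohao{\bigfnorm{\deltaUjP{k+1}{k}}+\bigfnorm{\deltaUjP{k}{k-1}}}$. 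Taking $c_0$ to be the maximum of the constants so produced and applying the triangle inequality then yields \eqref{eq:global_conv:12_dist_2_partial}.
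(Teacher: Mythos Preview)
Your proposal is correct and follows essentially the same route as the paper: exhibit an explicit subgradient in each block by pairing the smooth part with the subproblem optimality (\eqref{prob:u_subproblem}, \eqref{prob:U_subproblem}--\eqref{eq:U_subproblem_1}, \eqref{prob:P_subproblem} together with \eqref{eq:conv:1}), bound each block by multilinearity and boundedness of the iterates, and absorb $\deltaY{k+1}{k}$, $\deltasigma{k+1}{k}$, $\deltaW{k+1}{k}$ back into $\deltaUjP{k+1}{k}$ and $\deltaUjP{k}{k-1}$ via Lemma~\ref{lem:y_and_p} and the Lipschitz-type estimates. The only cosmetic differences are that the paper certifies $\tilde{\mathbf v}^{k+1}_{j,i}\in\hat\partial\iota_{\stmanifold{n_j}{1}}(\mathbf u^{k+1}_{j,i})$ (and likewise $\tilde V^{k+1}_j$ for the Stiefel block) directly from the $\liminf$ definition rather than invoking the normal-cone description, and it bounds $\bigfnorm{(\mathcal W^{k+1}-\mathcal W^k)\circledast(\mathcal T^{k+1}-\mathcal A)}$ in one shot via Proposition~\ref{prop:E2} instead of passing through $\bigfnorm{\deltaW{k+1}{k}}$ and the boundedness of $\mathcal T^{k+1}-\mathcal A$; both choices are equivalent here.
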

\begin{proof} We first consider $\partial_{\mathbf u_{j,i}} \tildeLtaualpha{k+1}{k} $, $1\leq j\leq d-t$, $1\leq i\leq R$, and $\partial_{U_j} \tildeLtaualpha{k+1}{k}  $, $d-t+1\leq j\leq d$, respectively. In what follows, we denote
\[
\overline{\mathbf v}^{k+1}_{j,i}:= \sigma^{k+1}_i \bigxiaokuohao{\mathcal Y^{k+1}+\tau\mathcal T^{k+1}}\bigotimes_{l\neq j}^d\nolimits\mathbf u^{k+1}_{l,i} + \alpha\mathbf u^{k+1}_{j,i}, ~{\rm and}~ \overline V^{k+1}_j := [\bar{\mathbf v}^{k+1}_{j,1},\ldots,\bar{\mathbf v}^{k+1}_{j,R}  ]. 
\]
We also recall $\mathbf v_{j,i}^{k+1}:= (\mathcal Y^k+ \tau\mathcal T^k){\mathbf u_{1,i}^{k+1}\otimes \cdots\otimes \mathbf u_{j-1,i}^{k+1} \otimes \mathbf u_{j+1,i}^k \otimes\cdots\otimes \mathbf u_{d,i}^k  }$ and $\tilde{\mathbf v}_{j,i}^{k+1} = \sigma^k_i \mathbf v^{k+1}_{j,i} + \alpha \mathbf u^k_{j,i}$ for later use. In addition, denote $\tilde V^{k+1}_j := [\tilde{\mathbf v}^{k+1}_{j,1},\ldots,\tilde{\mathbf v}^{k+1}_{j,R}]$.

For $1\leq j\leq d-t$,  one has
\begin{eqnarray}\label{eq:global_conv:0}
\partial_{\mathbf u_{j,i}}\tildeLtaualpha{k+1}{k}  &=& -\sigma^{k+1}_i \bigxiaokuohao{\mathcal Y^{k+1}+\tau\mathcal T^{k+1}}\bigotimes_{l\neq j}^d\nolimits\mathbf u^{k+1}_{l,i}- \alpha\mathbf u^{k+1}_{j,i} + \partial \iota_{\stmanifold{n_j}{1} }(\mathbf u^{k+1}_{j,i})\nonumber\\
&=& - \overline{\mathbf v}^{k+1}_{j,i} + \partial \iota_{ \stmanifold{n_j}{1}}(\mathbf u^{k+1}_{j,i}).
\end{eqnarray}
 we then wish to show that
\begin{equation}
\label{eq:global_conv:1}
\tilde{\mathbf v}^{k+1}_{j,i} \in \hat \partial \iota_{\stmanifold{n_j}{1} }(\mathbf u^{k+1}_{j,i}) \subset \partial \iota_{\stmanifold{n_j}{1} }(\mathbf u^{k+1}_{j,i}).
\end{equation}
The proof is similar to that of \cite[Lemma 6.1]{yang2019epsilon}. First, from the definition of $\iota_{\stmanifold{n_j}{1}}(\cdot) $ and $\hat \partial \iota_{\stmanifold{n_j}{1}}(\cdot)$ in \eqref{eq:f_subdiff}, it is not hard to see that if $\mathbf y \not\in \stmanifold{n_j}{1}$, then  \eqref{eq:f_subdiff} clearly holds when $\mathbf z = \tilde{\mathbf v}^{k+1}_{j,i}$; otherwise if  $\mathbf y \in \stmanifold{n_j}{1}$, i.e., $\|\mathbf y\|=1$, then from the definition of $\mathbf u^{k+1}_{j,i}$, we see that
	\[		
\mathbf u^{k+1}_{j,i} =   \arg\max_{ \|\mathbf y\|=1 } \innerprod{\mathbf y}{ \tilde{\mathbf v}^{k+1}_{j,i}  }\Leftrightarrow \langle \tilde{\mathbf v}^{k+1}_{j,i},\mathbf u^{k+1}_{j,i}-\mathbf y\rangle \geq 0,~\forall \|\mathbf y\|=1,
\]
which together with $\iota_{\stmanifold{n_j}{1}}(\mathbf y) = 0$ and $\iota_{\stmanifold{n_j}{1}}(\mathbf u^{k+1}_{j,i})=0$ gives 	
\[			      \setlength\abovedisplayskip{3pt}
\setlength\abovedisplayshortskip{3pt}
\setlength\belowdisplayskip{3pt}
\setlength\belowdisplayshortskip{3pt}
\liminf _{\mathbf y \neq \mathbf   u^{k+1}_{j,i}, \mathbf  y \rightarrow\mathbf   u^{k+1}_{j,i} } \frac{     \iota_{\stmanifold{n_j}{1}  }(\mathbf y) -\iota_{\stmanifold{n_j}{1}}(\mathbf u^{k+1}_{j,i})    -\langle \tilde{\mathbf v}^{k+1}_{j,i}, \mathbf  y -\mathbf  u^{k+1}_{j,i} \rangle}{\|\mathbf  y-\mathbf   u^{k+1}_{j,i} \|}\geq 0.
\]
  As a result, \eqref{eq:global_conv:1} is true, which together with \eqref{eq:global_conv:0} shows that
\begin{equation*}\label{eq:global_conv:3}
\tilde{\mathbf v}^{k+1}_{j,i}  - \overline{\mathbf v}^{k+1}_{j,i} \in \partial_{\mathbf u_{j,i}} \tildeLtaualpha{k+1}{k} , ~1\leq j\leq d-t,~1\leq i\leq R.
  \end{equation*}
Let $\boldsymbol{0}$ denote the original. Then by using the triangle inequality and the boundeness of $\{\boldsymbol{ \sigma}^k,U^k,\mathcal T^k,\mathcal Y^k   \}$, and noticing the definition of $\deltaUjP{k+1}{k}$,  there must exist large enough constants $c_1,c_2>0$ only depending on $\tau,\alpha$, and the size of  $\{\boldsymbol{ \sigma}^k,U^k,\mathcal T^k,\mathcal Y^k   \}$, such that
  \begin{eqnarray}
 && {\rm dist}(\boldsymbol{0}, \partial_{\mathbf u_{j,i}} \tildeLtaualpha{k+1}{k} )  \nonumber\\
 \leq&&  \bignorm{   \tilde{\mathbf v}^{k+1}_{j,i} - \overline{\mathbf v}^{k+1}_{j,i}  } \nonumber\\
 \leq && c_1\bigxiaokuohao{ \sum^{d}_{j=1}\bigfnorm{ \deltaUj{k+1}{k}  } + \bigfnorm{ \deltaP{k+1}{k}  } + \bigfnorm{\deltaY{k+1}{k}}     }\nonumber\\
 \leq&& c_1\bigxiaokuohao{  \sum^{d}_{j=1}\bigfnorm{ \deltaUj{k+1}{k}  } + 2\bigfnorm{  \deltaP{k+1}{k}} + \bigfnorm{\deltaP{k}{k-1}} } \nonumber \\    
  \leq && c_2 \bigxiaokuohao{  \bigfnorm{ \deltaUjP{k+1}{k}  }  + \bigfnorm{ \deltaUjP{k}{k-1}}    },~1\leq j\leq d-t.  \label{eq:global_conv:6}
\end{eqnarray}

  On the other hand, for $d-t+1\leq j\leq d$, by noticing the definition of $\overline{ V}^{k+1}_j$, we have
  \[
\partial_{U_j} \tildeLtaualpha{k+1}{k}   = - \overline{V}^{k+1}_j + \partial \iota_{ \stmanifold{n_j}{R}  }(U^{k+1}_j).
  \]
  From the definition of $U^{k+1}_j$ in \eqref{prob:U_subproblem} and similar to the above argument, we can show that 
  $\tilde V^{k+1}_j \in \partial \iota_{\stmanifold{n_j}{R}}(U^{k+1}_j).  $ Thus
\begin{equation*}\label{eq:global_conv:4}
\tilde V^{k+1}_j - \overline V^{k+1}_j \in \partial_{ U_j} \tildeLtaualpha{k+1}{k}  ,~d-t+1\leq j\leq d.
  \end{equation*}
Similar to \eqref{eq:global_conv:6}, there exists a large enough constant $c_3>0$ such that
\begin{equation}
{\rm dist}(\boldsymbol{0}, \partial_{\mathbf u_{j,i}} \tildeLtaualpha{k+1}{k}  )   \leq c_3\bigxiaokuohao{  \bigfnorm{ \deltaUjP{k+1}{k}  }   + \bigfnorm{ \deltaUjP{k}{k-1}}   },~d-t+1\leq j\leq d.\label{eq:global_conv:7}
\end{equation}

We then consider 
\[\nabla_{\mathcal T} \tildeLtaualpha{k+1}{k} = \mathcal W^{k+1}\circledast\bigxiaokuohao{\mathcal T^{k+1} -\mathcal A} + \mathcal Y^{k+1} - \tau\bigxiaokuohao{ \llbracket \boldsymbol{ \sigma}^{k+1};U^{k+1}_j  \rrbracket - \mathcal T^{k+1} } + \frac{4}{\tau}\bigxiaokuohao{\mathcal T^{k+1} - \mathcal T^k}.\]
Note that  $\mathcal W^{k+1}$ and $\boldsymbol{ \sigma}^{k+1}$ above are only representations instead of variables, which represent   \eqref{prob:W_subproblem} and  \eqref{prob:sigma_subproblem}. From the expression of $\mathcal Y^{k+1}$ in \eqref{eq:conv:1}, we have
\begin{eqnarray}
\bigfnorm{\mathcal W^{k+1}\circledast\bigxiaokuohao{\mathcal T^{k+1} -\mathcal A } + \mathcal Y^{k+1}  } &=& \bigfnorm{ \bigxiaokuohao{\mathcal W^{k+1} - \mathcal W^k}\circledast\bigxiaokuohao{\mathcal T^{k+1} -\mathcal A}  }\nonumber\\
&\leq&\bigfnorm{\deltaP{k+1}{k}},\nonumber
\end{eqnarray}
where   the inequality follows from Proposition \ref{prop:E2}. On the other side,
\begin{eqnarray}
\tau\bigfnorm{  { \llbracket \boldsymbol{ \sigma}^{k+1};U^{k+1}_j  \rrbracket - \mathcal T^{k+1} } } &=& \tau\bigfnorm{ \llbracket\boldsymbol{\sigma}^{k+1};U^{k+1}_j  \rrbracket  - \llbracket\boldsymbol{\sigma}^{k};U^{k+1}_j  \rrbracket   + \llbracket\boldsymbol{\sigma}^{k };U^{k+1}_j  \rrbracket -\mathcal T^{k+1} } \nonumber\\
&\leq& \tau \bigfnorm{\llbracket\boldsymbol{\sigma}^{k+1};U^{k+1}_j  \rrbracket  - \llbracket\boldsymbol{\sigma}^{k};U^{k+1}_j  \rrbracket    } + \bigfnorm{\deltaY{k+1}{k}}\nonumber\\
&\leq& c_4\bigxiaokuohao{\bigfnorm{\deltaUjP{k+1}{k}  } + \bigfnorm{\deltaUjP{k}{k-1}}}, \label{eq:global_conv:10}
\end{eqnarray}
where $c_4>0$ is large enough. Combining the above pieces shows that there exists a large enough constant $c_5>0$ such that
\begin{equation}
\label{eq:global_conv:8}
\bigfnorm{\nabla_{\mathcal T} \tildeLtaualpha{k+1}{k}  } \leq c_5\bigxiaokuohao{  \bigfnorm{\deltaUjP{k+1}{k}  } + \bigfnorm{\deltaUjP{k}{k-1}} }.
\end{equation}
Next, it follows from \eqref{eq:global_conv:10} that
\begin{equation}
\label{eq:global_conv:9}
\bigfnorm{ \nabla_{\mathcal Y}\tildeLtaualpha{k+1}{k}  } = \bigfnorm{  { \llbracket \boldsymbol{ \sigma}^{k+1};U^{k+1}_j  \rrbracket - \mathcal T^{k+1} }   } \leq \frac{c_4}{\tau}\bigxiaokuohao{  \bigfnorm{\deltaUjP{k+1}{k}  } + \bigfnorm{\deltaUjP{k}{k-1}} }.
\end{equation}
Finally,
\begin{equation}
\label{eq:global_conv:11}
\bigfnorm{ \nabla_{\mathcal T^\prime} \tildeLtaualpha{k+1}{k}   } = \frac{4}{\tau}\bigfnorm{\deltaP{k+1}{k}}.
\end{equation}
Combining \eqref{eq:global_conv:6}, \eqref{eq:global_conv:7}, \eqref{eq:global_conv:8}, \eqref{eq:global_conv:9}, \eqref{eq:global_conv:11}, we get that there exists a large enough constant $c_0>0$ independent of $k$, such that
\begin{equation*}%\label{eq:global_conv:12_dist_2_partial}
 {\rm dist}(\boldsymbol{0}, \partial \tildeLtaualpha{k+1}{k} ) \leq c_0\bigxiaokuohao{ \bigfnorm{ \deltaUjP{k+1}{k}} + \bigfnorm{\deltaUjP{k}{k-1}}     },
\end{equation*}
as desired.
\end{proof}

Now we can present the proof concerning global convergence.
 \begin{proof}[Proof of Theorem \ref{th:global_conv}]
We have mentioned that $\{ \tildeLtaualpha{k+1}{k}  \}$  inherits the properties of $\{\tilde L_\tau^{k+1,k} \}$, i.e., it is bounded, nonincreasing and convergent. We denote its limit as   $\tilde L^*_{\tau,\alpha} = \lim_{k\rightarrow\infty} \tildeLtaualpha{k+1}{k}  = \tilde L_{\tau,\alpha}( U^*_j,\mathcal T^*,\mathcal Y^*,\mathcal T^*  )$ where $\{U^*_j,\mathcal T^*,\mathcal Y^* ,\mathcal T^* \}$ is a limit point. According to Definition \ref{def:kl} and Proposition \ref{prop:KL_L_tau_alpha}, there exist an $\epsilon_0>0$, a neighborhood of $\{U^*_j,\mathcal T^*,\mathcal Y^* ,\mathcal T^* \}$, and a 
continuous and concave function $\psi(\cdot):[0,\epsilon_0) \rightarrow\mathbb R_+$ such that for all   $\{U_j,\mathcal T,\mathcal Y,\mathcal T^\prime\}  \in\mathcal N$ satisfying  $\tilde L_{\tau,\alpha}^* < \tilde L_{\tau,\alpha}( U_j,\mathcal T,\mathcal Y,\mathcal T^\prime) <\tilde L_{\tau,\alpha}^* + \epsilon_0$, there holds
\begin{equation}\label{eq:kl_true}
\psi^\prime(\tilde L_{\tau,\alpha}( U_j,\mathcal T,\mathcal Y,\mathcal T^\prime) -\tilde L_{\tau,\alpha}^*  ){\rm dist}(0,\partial \tilde L_{\tau,\alpha}( U_j,\mathcal T,\mathcal Y,\mathcal T^\prime) \geq 1.
\end{equation}

Let $\epsilon_1 >0$ be such that
\begin{small}
\[ 
\mathbb B_{\epsilon_1} := \{ \bigxiaokuohao{U_j,\mathcal T,\mathcal Y,\mathcal T^\prime}\mid \|\bigfnorm{U_j-U^*_j} < \epsilon_1,1\leq j\leq d,\bigfnorm{\mathcal T-\mathcal T^*}< \epsilon_1, \bigfnorm{\mathcal Y-\mathcal Y^*}<2 \epsilon_1, \bigfnorm{\mathcal T^\prime - \mathcal T^*} <2\epsilon_1  \} \subset \mathcal N,
\]
\end{small}
and let $\mathbb B^{U_j,\mathcal T}_{\epsilon_1}:= \{ \bigxiaokuohao{U_j,\mathcal T}\mid \bigfnorm{U_j -U^*_j} < \epsilon_1,1\leq j\leq d,\bigfnorm{\mathcal T-\mathcal T^*}<\epsilon_1  \}$. From the stationary point system \eqref{eq:kkt}  and the expression of $\mathcal Y^{k+1}$ in \eqref{eq:conv:1}, we have
\begin{eqnarray}
\bigfnorm{\mathcal Y^{k} -\mathcal Y^* } &=& \bigfnorm{  \mathcal W^{k-1}\circledast\bigxiaokuohao{ \mathcal T^{k}  - \mathcal A} -  \mathcal W^*\circledast\bigxiaokuohao{\mathcal T^* - \mathcal A}   } \nonumber \\
&\leq& \bigfnorm{ \mathcal W^{k-1}\circledast\bigxiaokuohao{ \mathcal T^{k}  - \mathcal A}  -\mathcal W^k\circledast\bigxiaokuohao{ \mathcal T^k-\mathcal A}} +\bigfnorm{ \mathcal W^k\circledast\bigxiaokuohao{\mathcal T^k-\mathcal A}  -\mathcal W^*\circledast
\bigxiaokuohao{\mathcal T^*-\mathcal A} }   \nonumber\\
&=&   \bigfnorm{\deltaP{k}{k-1}} + \bigfnorm{\deltaP{k}{*}}  \label{eq:global:7}
\end{eqnarray}
where the last inequality follows from Propositions \ref{prop:E2} and \ref{prop:lipschitz_gradient}.  On the other hand, 
\begin{equation}\label{eq:global:9}
\bigfnorm{\mathcal T^{k-1} -\mathcal T^*} \leq \bigfnorm{\deltaP{k}{k-1}} + \bigfnorm{\deltaP{k}{*}}.
\end{equation}
As Theorem \ref{th:sub_convergence} shows that there exists $k_0>0$ such that for $k\geq k_0$, $\bigfnorm{\deltaP{k}{k-1}}<\epsilon_1$,  \eqref{eq:global:7} and \eqref{eq:global:9} tells us that if $k\geq k_0$ and $(U^k_j,\mathcal T^k)\in\mathbb B^{U_j,\mathcal T }_{\epsilon_1 }$, then $\{U^k_j,\mathcal T^k,\mathcal Y^k,\mathcal T^{k-1} \} \in \mathbb B_{\epsilon_1 } \subset\mathcal N$. Such $k_0$ must exist as $\{U^*_j,\mathcal T^*,\mathcal Y^* ,\mathcal T^* \}$ is a limit point.   In addition, denote $c_1:=\min\{\alpha/2,1/\tau  \}$; then there exists $k_1\geq k_0$ such that $  (U^{k_1}_j,  \mathcal T^{k_1} )  \in\mathbb B^{ U_j,\mathcal T  }_{\epsilon_1/2} $ and
\begin{equation}
\begin{split}
&   \frac{c_0}{2\sqrt{c_1}c_2}    \bigfnorm{\deltaUjP{k_1}{k_1-1}}  < \frac{\epsilon_1}{16},~    \frac{c_0}{2\sqrt{c_1}c_2}   \bigfnorm{\deltaUjP{k_1-1}{k_1-2}}   <\frac{\epsilon_1}{16},~   \frac{c_2}{2\sqrt{c_1}}   \psi( \tilde L_{\tau,\alpha}^{k_1,k_1-1} - L^*_{\tau,\alpha}  ) < \frac{\epsilon_1}{4},\\
&L^*_{\tau,\alpha} < \tilde L_{\tau,\alpha}^{k_1,k_1-1} < L^*_{\tau,\alpha} + \epsilon_0,\label{eq:parameters}
\end{split}
\end{equation}
where $c_0$ is the constant appeared in Lemma \ref{lem:bounded_partial_L}, and $c_2 $ is a constant such that $c_2 > 16c_0/\sqrt{c_1}$.

In what follows, we use induction method to show that $\bigxiaokuohao{U^k_j,\mathcal T^k}\in\mathbb B^{U_j,\mathcal T}_{\epsilon_1}$ for all $k > k_1$.
Since   $\psi(\cdot)$ in Definition \ref{def:kl}   is concave, it holds that for any $k$, 
\begin{equation}\label{eq:global:10}
\psi^\prime( \tilde L^{k,k-1}_{\tau,\alpha} - L^*_{\tau,\alpha}  )\left(  (\tilde L^{k,k-1}_{\tau,\alpha} - \tilde L^*_{\tau,\alpha}) - (\tilde L^{k+1,k}_{\tau,\alpha} - \tilde L^*_{\tau,\alpha}  ) \right) \leq \psi(\tilde L^{k,k-1}_{\tau,\alpha} -\tilde L^*_{\tau,\alpha}) - \psi( \tilde L^{k+1,k}_{\tau,\alpha} -\tilde L^*_{\tau,\alpha} );
\end{equation}
on the other side, from the   previous paragraph we see that  $(U^{k_1}_j,\mathcal T^{k_1})\in\mathbb B^{ U_j,\mathcal T  }_{\epsilon_1/2}$, $\{ U^{k_1}_j,\mathcal T^{k_1},\mathcal Y^{k_1},\mathcal T^{k_1-1}  \} \in \mathbb B_{\epsilon_1} \subset \mathcal N$, and so \eqref{eq:kl_true} holds at $\{U^{k_1}_j,\mathcal T^{k_1},\mathcal Y^{k_1},\mathcal T^{k_1-1} \}$. Recall $c_1=\min\{\alpha/2,1/\tau  \}$. From Lemma \ref{lem:suff_decrease_1} and the relation between $\tilde L_\tau$ and $\tilde L_{\tau,\alpha}$, we obtain  
\begin{eqnarray*}
	c_1\bigfnorm{\deltaUjP{k_1+1}{k}}^2  &\leq& \tilde L^{k_1,k_1-1}_{\tau,\alpha} - \tilde L_{\tau,\alpha}^{k_1+1,k_1} \\
	&\leq& \frac{\psi(\tilde L^{k_1,k_1-1}_{\tau,\alpha} - \tilde L^*_{\tau,\alpha}) - \psi( \tilde L^{k_1+1,k_1}_{\tau,\alpha} -\tilde L^*_{\tau,\alpha} )}{\psi^\prime( \tilde L^{k_1,k_1-1}_{\tau,\alpha} - \tilde L^*_{\tau,\alpha}  )} \\
	&\leq& c_2\left( \psi(\tilde L^{k_1,k_1-1}_{\tau,\alpha} -\tilde L^*_{\tau,\alpha}) - \psi( \tilde L^{k_1+1,k_1}_{\tau,\alpha} -\tilde L^*_{\tau,\alpha} ) \right) \cdot c_2^{-1}{\rm dist}(0, \partial  \tilde L^{k_1,k_1-1}_{\tau,\alpha} ),
\end{eqnarray*}
where   the second inequality is due to \eqref{eq:global:10} while the last one comes from \eqref{eq:kl_true}. Using $\sqrt{ab}\leq \frac{a+b}{2}$ for $a\geq 0,b\geq 0$, invoking \eqref{eq:global_conv:12_dist_2_partial} and noticing the range in \eqref{eq:parameters}, we obtain
\begin{eqnarray*}
\sqrt{c_1} \bigfnorm{\deltaUjP{k_1+1}{k}} &\leq& \frac{c_2}{2}\left( \psi(\tilde L^{k_1,k_1-1}_{\tau,\alpha} -\tilde L^*_{\tau,\alpha}) - \psi( \tilde L^{k_1+1,k_1}_{\tau,\alpha} -\tilde L^*_{\tau,\alpha} ) \right)\nonumber\\
&&~~~~~~~~~~~~~~~~~~~~ + \frac{c_0 }{2c_2}\bigxiaokuohao{ \bigfnorm{ \deltaUjP{k_1}{k_1-1}  } + \bigfnorm{\deltaUjP{k_1-1}{k_1-2}}   }\label{eq:global:12}\\
&<&\frac{ \sqrt{c_1}\epsilon_1}{4} + \frac{ \sqrt{c_1}\epsilon_1}{8} < \frac{\sqrt{c_1}\epsilon_1}{2},\label{eq:global:11}\nonumber
\end{eqnarray*}
and so
\begin{equation*}
\bigfnorm{\deltaUjP{k_1+1}{*}}\leq \bigfnorm{\deltaUjP{k_1+1}{k_1}} + \bigfnorm{\deltaUjP{k_1}{*}} < \frac{\epsilon_1}{2} + \frac{\epsilon_1}{2}=\epsilon_1,
\end{equation*}
namely, $ (U^{k_1+1}_j,\mathcal T^{k_1+1}) \in\mathbb B^{U_j,\mathcal T }_{\epsilon_1}$.  

Now assume that $(U^k_j,\mathcal T^k)\in\mathbb B^{U_j,\mathcal T  }_{\epsilon_1}$ for $k=k_1,\ldots,K$. This implies that \eqref{eq:kl_true} is true at $\{U^k_j,\mathcal T^k,\mathcal Y^k,\mathcal T^{k-1}  \}$, and similarly to the above analysis, we have
\begin{equation}
\sqrt{c_1} \bigfnorm{\deltaUjP{k+1}{k}}  \leq  \frac{c_2}{2}\left( \psi(\tilde L^{k,k-1}_{\tau,\alpha} - \tilde L^*_{\tau,\alpha}) - \psi( \tilde L^{k+1,k}_{\tau,\alpha} -\tilde L^*_{\tau,\alpha} ) \right) + \frac{c_0}{2c_2}\left(\bigfnorm{\deltaUjP{k}{k-1}}+ \bigfnorm{\deltaUjP{k-1}{k-2}}  \right),~k=k_1,\ldots,K. \label{eq:global:13}
\end{equation}
We then show that $(U^{K+1}_j,\mathcal T^{K+1})\in\mathbb B^{ U_j,\mathcal T}_{\epsilon_1}$. Summing \eqref{eq:global:13} for $k=k_1,\ldots,K$ yields
\begin{eqnarray}
\sqrt{c_1} \sum^K_{k=k_1}\bigfnorm{\deltaUjP{k+1}{k}} &\leq& \frac{c_2}{2}\left( \psi(\tilde L^{k_1,k_1-1}_{\tau,\alpha} - \tilde L^*_{\tau,\alpha}) - \psi( \tilde L^{K+1,K}_{\tau,\alpha} -\tilde L^*_{\tau,\alpha} ) \right)   + \frac{c_0}{2c_2}\sum^K_{k=k_1} \left( \bigfnorm{ \deltaUjP{k}{k-1} } +  \bigfnorm{\deltaUjP{k-1}{k-2}}  \right)\nonumber\\
&\leq& \frac{c_2}{2}\left( \psi(\tilde L^{k_1,k_1-1}_{\tau,\alpha} - \tilde L^*_{\tau,\alpha}) - \psi( \tilde L^{K+1,K}_{\tau,\alpha} -\tilde L^*_{\tau,\alpha} ) \right)  \nonumber\\
&&+ \frac{c_0}{c_2}\sum^{K-1}_{k=k_1}\bigfnorm{\deltaUjP{k+1}{k}} +   \frac{2c_0}{c_2}\bigfnorm{\deltaUjP{k_1}{k_1-1}}+ \frac{c_0}{c_2}\bigfnorm{\deltaUjP{k_1-1}{k_1-2}}   .\label{eq:global:14}
\end{eqnarray}
Rearranging the terms, noticing \eqref{eq:parameters} and noticing that $\frac{c_2}{c_0}> \frac{\sqrt{c_1}}{16}$, we have
\begin{equation*}
\frac{15\sqrt{c_1}}{16} \sum^K_{k=k_1} \bigfnorm{\deltaUjP{k+1}{k}}  \leq  \frac{\sqrt{c_1}}{4}\epsilon_1 +   \frac{\sqrt c_1\epsilon_1}{16} +   \frac{\sqrt c_1\epsilon_1}{16}, 
\end{equation*}
and so 
\begin{eqnarray*}
\bigfnorm{\deltaUjP{K+1}{*}}&\leq& \bigfnorm{\deltaUjP{K+1}{k_1}}+ \bigfnorm{\deltaUj{k_1}{*}}\nonumber\\
&<& \sum^K_{k=k_1}\bigfnorm{\deltaUjP{k+1}{k}}+ \frac{\epsilon_1}{2}\nonumber\\
&< &   \frac{3\epsilon_1}{8} + \frac{\epsilon_1}{2} < \epsilon_1.
\end{eqnarray*}
Thus induction method implies that $(U^k_j,\mathcal T^k)\in\mathbb B^{U_j,\mathcal T}_{\epsilon_1}$ for all $k\geq  k_1$, i.e.,  $\{U^k_j,\mathcal T^k,\mathcal Y^k,\mathcal T^{k-1}   \}\in \mathcal N$, $k\geq k_1$. As a result, \eqref{eq:global:13} holds for all $k\geq k_1$, so does \eqref{eq:global:14}. Therefore, letting $K\rightarrow\infty$ in \eqref{eq:global:14} yields 
\[
\sum^{\infty}_{k=1}\bigfnorm{\deltaUjP{k+1}{k}} <+\infty,
\]
which shows that $\{ U_j^k,\mathcal T^k\}$ is a Cauchy sequence and hence converges. Since $(U^*_j,\mathcal T^*)$ in Theorem \ref{th:sub_convergence} is a limit point, the whole sequence converges to $(U^*_j,\mathcal T^*)$. This completes the proof.
\end{proof}

\section{Numerical Experiments}\label{sec:numer}
We evaluate the robustness of model \eqref{prob:robust_orth_main} solved by HQ-ADMM in this section using synthetic and real data. The least squares based model \eqref{prob:obj_ls} is used as a comparison. \eqref{prob:obj_ls} is solved by the alternating least squares (ALS) method.
All the   computations are conducted on an Intel i7-7770 CPU desktop computer with 32 GB of RAM. The supporting software is Matlab R2015b.  The Matlab  package Tensorlab  \cite{tensorlab2013} is employed for tensor operations.  The Matlab code of HQ-ADMM is available   at \url{https://github.com/yuningyang19/hqadmm_rota}.

The stopping criterion for HQ-ADMM is 
$ \bigjueduizhi{\bigfnorm{\bigllbracket{\boldsymbol{ \sigma}^{k+1};U^{k+1}_j} -\mathcal A} - \bigfnorm{ \bigllbracket{\boldsymbol{ \sigma}^k;U^k_j} - \mathcal A  } }\leq 10^{-6}$ 
or $k\geq 2000$ for practical reasons. The parameter $\alpha$ in HQ-ADMM is set to $10^{-8}$,   $\tau\in \{0.7,1 \}$;      $\delta=0.05$.

\paragraph{Synthetic data}
We consider randomly generated tensors contaminated by different kinds of noises listed in the following
\begin{itemize}
	\item $\mathcal A = \mathcal A_0/\bignorm{\mathcal A_0}_F + \beta\cdot\mathcal N/\|\mathcal N\|_F$, where $\mathcal A_0$ is the ground truth tensor specified later, and $\mathcal N$ denotes the Cauchy noise, with scale parameter $\delta=0.05$. $\beta=0.5$;
	\item  $\mathcal A = \mathcal A_0/\bignorm{\mathcal A_0}_F + \mathcal O$. Here $\mathcal O$ denotes sparse outliers, with sparsity $0.1$, i.e., $10\%$ of the entries of $\mathcal A_0$ are contaminated by outliers. Outliers are drawn uniformly from $[0,10]$;
	\item $\mathcal A = \mathcal A_0/\bignorm{\mathcal A_0}_F + \beta\cdot\mathcal N/\|\mathcal N\|_F$, where $\mathcal N$ denotes Gaussian noise, with  $\beta=0.1$.
\end{itemize}
The ground truth tensor    $\mathcal A_0 = \sum^R_{i=1} \sigma_i\bigotimesu $,  where   $U_j$   are randomly drawn from a uniformly distribution in $[-1,1]$.  $U_j$, $d-t+1\leq j\leq d$, are then made to be columnwisely orthonormal, while the remaing $U_j$ are columnwisely normalized. $\sigma_i$ are drawn from Gaussian distribution. 
For convenience,  we set $d=3$ or $4$, $n_1=\cdots =n_d$, and $R=5$ in all the experiments in this part. The initializers for HQ-ADMM and ALS are randomly generated. The reported results are averaged over 50 instances for each case.

\begin{table*}[h] \footnotesize
	\renewcommand\arraystretch{0.9}
	
	\begin{floatrow}
		\capbtabbox{
			
			\setlength{\tabcolsep}{0.4mm}			
			
 \begin{mytabular1}{rr|rrr|rrr}
	\toprule
	&              &         \multicolumn{3}{c}{HQ-ADMM for  \eqref{prob:robust_orth_main}}       & \multicolumn{3}{c}{ALS for \eqref{prob:obj_ls}}    \\
	\toprule
	\multicolumn{1}{c}{$n$} & \multicolumn{1}{c}{$(d,t)$}   & \multicolumn{1}{c}{err.} & \multicolumn{1}{c}{iter.} & \multicolumn{1}{c}{time} & \multicolumn{1}{c}{err.} & \multicolumn{1}{c}{iter.} & \multicolumn{1}{c}{time} \\
	\toprule
	10    & $(3, 1)$     & 5.57E-02 & 395   & 0.16  & 4.29E-01 & 149   & 0.04  \\
	20    & $(3, 1)$     & 4.66E-02 & 315   & 0.21  & 4.20E-01 & 147   & 0.05  \\
	50    & $(3, 1)$      & 4.30E-02 & 45    & 0.09  & 4.33E-01 & 309   & 0.27  \\
	80    & $(3, 1)$     & 3.05E-02 & 71    & 0.77  & 4.31E-01 & 190   & 1.16  \\
	90    & $(3, 1)$     & 3.04E-02 & 47    & 0.76  & 4.29E-01 & 152   & 1.28  \\
	100   & $(3, 1)$      & 3.21E-02 & 86    & 1.62  & 4.41E-01 & 210   & 1.82  \\
	10    & $(3, 2)$      & 5.25E-02 & 453   & 0.19  & 3.84E-01 & 33    & 0.01  \\
	20    & $(3, 2)$    & 2.93E-02 & 137   & 0.10  & 4.12E-01 & 17    & 0.01  \\
	60    & $(3, 2)$    & 2.25E-02 & 200   & 1.03  & 4.42E-01 & 11    & 0.04  \\
	80    & $(3, 2)$    & 2.20E-02 & 58    & 0.60  & 4.18E-01 & 11    & 0.07  \\
	90    & $(3, 2)$     & 2.02E-02 & 136   & 2.11  & 4.33E-01 & 14    & 0.11  \\
	100   & $(3, 2)$     & 2.57E-02 & 96    & 1.84  & 4.23E-01 & 10    & 0.09  \\
	80    & $(3, 3)$     & 1.39E-02 & 35    & 0.34  & 1.41E+00 & 2     & 0.02  \\
	100   & $(3, 3)$     & 2.08E-02 & 89    & 1.69  & 1.41E+00 & 2     & 0.03  \\
	10    & $(4, 1)$     & 3.86E-02 & 64    & 0.08  & 4.12E-01 & 341   & 0.21  \\
	20    & $(4, 1)$    & 7.98E-02 & 40    & 0.16  & 4.45E-01 & 613   & 1.02  \\
	30    & $(4, 1)$     & 7.37E-02 & 28    & 0.71  & 4.25E-01 & 485   & 6.55  \\
	40    & $(4, 1)$    & 5.08E-02 & 25    & 1.62  & 4.47E-01 & 637   & 16.68  \\
	10    & $(4, 2)$     & 4.98E-02 & 75    & 0.09  & 4.56E-01 & 299   & 0.19  \\
	20    & $(4, 2)$    & 1.11E-01 & 53    & 0.20  & 4.73E-01 & 527   & 0.94  \\
	30    & $(4, 2)$     & 7.33E-02 & 36    & 1.09  & 4.76E-01 & 394   & 6.06  \\
	40    & $(4, 2)$     & 6.85E-02 & 27    & 1.75  & 4.70E-01 & 705   & 19.25  \\
	10    & $(4, 3)$    & 9.57E-02 & 100   & 0.12  & 4.83E-01 & 664   & 0.41  \\
	20    & $(4, 3)$    & 8.60E-02 & 69    & 0.27  & 5.00E-01 & 707   & 1.04  \\
	30    & $(4, 3)$    & 1.29E-01 & 35    & 0.98  & 5.18E-01 & 645   & 9.72  \\
	40    & $(4, 3)$    & 1.40E-01 & 30    & 1.86  & 5.41E-01 & 878   & 22.68  \\
	\bottomrule
\end{mytabular1}%

		}{
			\caption{\footnotesize Comparison of HQ-ADMM for \eqref{prob:robust_orth_main} and ALS for \eqref{prob:obj_ls} when the ground truth tensor is contaminated by Cauchy noise.}
			\label{tab:cauchy}
		}
		\capbtabbox{
			
			\setlength{\tabcolsep}{0.4mm}

	\begin{mytabular1}{rr|rrr|rrr}
	\toprule
	&              &         \multicolumn{3}{c}{HQ-ADMM for  \eqref{prob:robust_orth_main}}       & \multicolumn{3}{c}{ALS for \eqref{prob:obj_ls}}    \\
	\toprule
	\multicolumn{1}{r}{$n$} & \multicolumn{1}{l}{$(d,t)$}   & \multicolumn{1}{r}{err.} & \multicolumn{1}{r}{iter.} & \multicolumn{1}{r}{time} & \multicolumn{1}{r}{err.} & \multicolumn{1}{r}{iter.} & \multicolumn{1}{r}{time} \\
	\toprule
	10    & $(3, 1)$    & 4.54E-01 & 89    & 0.04  & 1.40E+00 & 150   & 0.04  \\
	20    & $(3, 1)$     & 5.95E-02 & 46    & 0.04  & 1.41E+00 & 251   & 0.09  \\
	50    & $(3, 1)$    & 1.99E-02 & 31    & 0.10  & 1.41E+00 & 757   & 0.95  \\
	80    & $(3, 1)$    & 2.21E-02 & 27    & 0.55  & 1.41E+00 & 1456  & 12.17  \\
	90    & $(3, 1)$     & 3.52E-02 & 28    & 0.70  & 1.41E+00 & 1204  & 11.59  \\
	100   & $(3, 1)$     & 2.82E-02 & 31    & 0.91  & 1.41E+00 & 1390  & 15.44  \\
	10    & $(3, 2)$     & 4.32E-01 & 56    & 0.03  & 1.41E+00 & 120   & 0.04  \\
	20    & $(3, 2)$     & 6.13E-02 & 35    & 0.04  & 1.41E+00 & 314   & 0.15  \\
	50    & $(3, 2)$     & 7.50E-03 & 25    & 0.07  & 1.41E+00 & 592   & 0.69  \\
	80    & $(3, 2)$    & 7.40E-03 & 25    & 0.42  & 1.41E+00 & 820   & 6.05  \\
	90    & $(3, 2)$     & 6.66E-03 & 26    & 0.65  & 1.41E+00 & 828   & 7.80  \\
	100   & $(3, 2)$     & 8.16E-03 & 27    & 0.90  & 1.41E+00 & 928   & 11.99  \\
	80    & $(3, 3) $   & 6.08E-03 & 25    & 0.42  & 1.41E+00 & 2     & 0.02  \\
	100   & $(3, 3) $    & 6.72E-03 & 27    & 0.80  & 1.41E+00 & 2     & 0.04  \\
	10 	& $(4, 1)$ &1.04E-01 &	76 & 0.23 & 1.42E+00 & 187 & 0.14\\ 
	20 	& $(4, 1)$ & 2.91E-02 & 34 & 0.28 & 1.41E+00 & 439 & 1.02\\ 
	30 	& $(4, 1)$ & 4.40E-02 & 28 & 1.06 & 1.41E+00 & 1173 & 18.40\\ 
	40 	& $(4, 1)$ & 6.09E-02 & 27 & 2.00 & 1.41E+00 & 885 & 26.09\\ 
	10    & $(4, 2)$     & 1.31E-01 & 67    & 0.08  & 1.41E+00 & 246   & 0.16  \\
	20    & $(4, 2)$    & 5.23E-02 & 28    & 0.13  & 1.41E+00 & 729   & 1.12  \\
	30    & $(4, 2)$     & 6.17E-02 & 27    & 0.85  & 1.41E+00 & 697   & 12.68  \\
	40    & $(4, 2)$      & 3.36E-02 & 29    & 1.88  & 1.41E+00 & 1047  & 29.12  \\
	10    & $(4, 3)$    & 1.40E-01 & 64    & 0.08  & 1.41E+00 & 208   & 0.13  \\
	20    & $(4, 3)$    & 8.14E-02 & 29    & 0.12  & 1.41E+00 & 622   & 0.92  \\
	30    & $(4, 3)$     & 8.45E-02 & 38    & 1.15  & 1.41E+00 & 900   & 14.85  \\
	40    & $(4, 3)$     & 1.13E-01 & 30    & 2.12  & 1.41E+00 & 846   & 24.38  \\
	\bottomrule
\end{mytabular1}%

		}{
			\caption{\footnotesize Comparison of HQ-ADMM for \eqref{prob:robust_orth_main} and ALS for \eqref{prob:obj_ls} when the ground truth tensor is contaminated by outliers.}
			\label{tab:outliers}
		}
	\end{floatrow}
\end{table*}

Comparisons of HQ-ADMM for solving \eqref{prob:robust_orth_main} and ALS for solving \eqref{prob:obj_ls} with Cauchy noise are reported in Table  \ref{tab:cauchy},  where ${\rm err.} = \bigfnorm{ \mathcal A_0/\bigfnorm{\mathcal A_0} - \mathcal A^*/\bigfnorm{\mathcal A^*}  }$, with $\mathcal A^*=\bigllbracket{\boldsymbol{ \sigma}^*;U^*_j }$ the tensor generated by the algorithm. ``iter.' denotes the number of iterates, and ``time'' stands for the CPU time consumed by the algorithm. From the ``err.'' columns, we see that in all cases,   HQ-ADMM performs much better than ALS; in particular, ``err.'' of HQ-ADMM is smaller than $0.1$ in almost all cases, which confirms that the proposed model and algorithm are consistent with Cauchy noise. Considering the efficiency, we see that HQ-ADMM all converges within $500$ iterates, and it consumes $1\sim 2$ seconds. Comparing with ALS,
when $d=3$, ALS is more efficient in most cases, while HQ-ADMM outperforms ALS when $d=4$. Thus HQ-ADMM is efficient.

The cases contaminated by outliers are reported in Table \ref{tab:outliers}, from which we can still observe that HQ-ADMM for solving \eqref{prob:robust_orth_main} is consistent with outliers, owing  to the redescending property of the Cauchy loss.  HQ-ADMM outperforms ALS in terms of the iterates and CPU time.

The cases with Gaussian noise are reported in Table \ref{tab:gaussian}. It is known that model \eqref{prob:obj_ls} is consistent with Gaussian noise, which can be seen from the table. We also observe that \eqref{prob:robust_orth_main} is consistent with Gaussian noise from the third column, although the results are slightly worse than \eqref{prob:obj_ls}, as reported in the table. However, it is interesting to see that in some cases, namely, $(n,d,t) = (80,3,1), (30,4,1),(40,4,1),(20,4,2),(30,4,3)$, HQ-ADMM for   \eqref{prob:robust_orth_main} is slightly better than ALS for \eqref{prob:obj_ls}. HQ-ADMM still shows its efficiency, and is more stable than ALS, as ALS needs much more iterates when $t=1$. 

\begin{table}[htbp]
	\centering
	\caption{\footnotesize Comparison of HQ-ADMM for \eqref{prob:robust_orth_main} and ALS for \eqref{prob:obj_ls} when the ground truth tensor is contaminated by Gaussian noise.}
	\begin{mytabular}{rrrrrrrr}
		\toprule
		&              &         \multicolumn{3}{c}{HQ-ADMM for  \eqref{prob:robust_orth_main}}       & \multicolumn{3}{c}{ALS for \eqref{prob:obj_ls}}    \\
		\toprule
		\multicolumn{1}{c}{$n$} & \multicolumn{1}{c}{$(d,t)$}   & \multicolumn{1}{c}{err.} & \multicolumn{1}{c}{iter.} & \multicolumn{1}{c}{time} & \multicolumn{1}{c}{err.} & \multicolumn{1}{c}{iter.} & \multicolumn{1}{c}{time} \\
		\toprule
		10    & $(3, 1)$     & 4.51E-02 & 198   & 0.09  & 4.09E-02 & 676   & 0.18  \\
		20    & $(3, 1)$     & 3.62E-02 & 53    & 0.04  & 2.73E-02 & 564   & 0.19  \\
		50    & $(3, 1)$     & 2.24E-02 & 30    & 0.08  & 2.18E-02 & 550   & 0.58  \\
		80    & $(3, 1)$     & 2.14E-02 & 34    & 0.57  & 2.72E-02 & 716   & 5.78  \\
		90    & $(3, 1)$     & 2.70E-02 & 33    & 0.79  & 2.44E-02 & 696   & 6.69  \\
		100   & $(3, 1)$     & 2.79E-02 & 34    & 0.98  & 2.28E-02 & 712   & 7.75  \\
		10    & $(3, 2)$     & 3.89E-02 & 296   & 0.13  & 3.48E-02 & 16    & 0.01  \\
		20    & $(3, 2)$    & 2.15E-02 & 65    & 0.05  & 1.87E-02 & 17    & 0.01  \\
		50    & $(3, 2)$     & 7.99E-03 & 24    & 0.07  & 7.67E-03 & 14    & 0.02  \\
		80    & $(3, 2)$    & 4.90E-03 & 24    & 0.40  & 4.82E-03 & 20    & 0.15  \\
		90    & $(3, 2)$     & 4.68E-03 & 25    & 0.60  & 4.34E-03 & 41    & 0.40  \\
		100   & $(3, 2)$     & 3.85E-03 & 24    & 0.72  & 3.85E-03 & 7     & 0.10  \\
		10    & $(4, 1)$     & 1.01E-01 & 673   & 0.83  & 8.62E-02 & 613   & 0.42  \\
		20    & $(4, 1)$      & 7.46E-02 & 67    & 0.31  & 6.21E-02 & 699   & 1.33  \\
		30    & $(4, 1)$      & 6.22E-02 & 29    & 1.05  & 6.61E-02 & 692   & 11.90  \\
		40    & $(4, 1)$      & 8.68E-02 & 27    & 1.92  & 1.11E-01 & 858   & 24.49  \\
		10    & $(4, 2)$     &  1.39E-02 &	45 	& 0.15 	& 1.74E-02	& 20 &	0.02   \\
		20    & $(4, 2)$     &  4.75E-03 &	23 	& 0.20 	& 9.09E-03	& 17 	& 0.05   \\
		30    & $(4, 2)$     & 5.42E-03	& 26 &	0.91 	& 2.71E-03	& 14 	& 0.25  \\
		40    & $(4, 2)$      &   2.26E-03 &	26 	& 2.10 	& 1.96E-03	& 41 &	1.24    \\
		10    & $(4, 3)$      &  1.29E-02 &	48 	& 0.17 	& 1.23E-02	& 10 	& 0.01   \\
		20    & $(4, 3)$     &  4.93E-03 &	24 &	0.21 &	4.73E-03 &	10 	& 0.04   \\
		30    & $(4, 3)$     &  2.72E-03	& 25 	& 0.98 &	2.88E-03 &	30 &	0.53   \\
		40    & $(4, 3)$      &  1.95E-03 &	26 &	2.15 &	1.92E-03 &	21 	& 0.67   \\
		\bottomrule
	\end{mytabular}%
	\label{tab:gaussian}%
\end{table}%

\begin{table}[htbp]
	\centering
	\caption{\footnotesize   HQ-ADMM for video surveillance with different $R$. The last column shows the compressed ratio of the compressed background factors $D,U,V$  to the sum of background frames $B_r$, $1\leq r\leq l$.}
	\begin{mytabular1}{ccccc}
		\toprule
		\multicolumn{1}{c}{$R$} & iter. &  time &  $\frac{R(1000+144+176)}{1000*144*176}$\\
		\toprule
10 	&43 &	33.86 
& 0.05\%\\
20 	&31 &	26.02 
&0.1\%\\
30 	&26 &	21.58 
&0.16\%\\
40 	&43 &	38.13 
&0.21\%\\
50 	&31 &	28.78 &0.26\%\\
		\bottomrule
\end{mytabular1}%
\label{tab:fb_extraction}%
\end{table}%

 \begin{figure} 
	\centering

	%\subfigcapskip = 5pt
	\renewcommand*{\arraystretch}{0.5}
	\setlength{\tabcolsep}{1pt}

	\begin{tabular}{ccccccc}

		\includegraphics[width=0.8in]
		{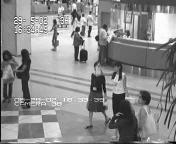}
		
		&

		\includegraphics[width=0.8in]
			{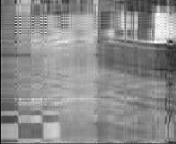}
		
		&
		\includegraphics[width=0.8in]
			{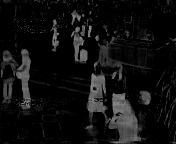}
			
		&
		\includegraphics[width=0.8in]
			{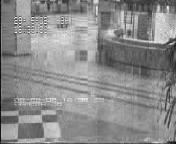}
			
					&
			\includegraphics[width=0.8in]
			{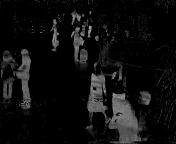}
			
					&
			\includegraphics[width=0.8in]
			{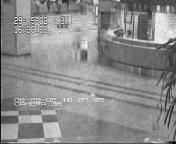}
			
					&
			\includegraphics[width=0.8in]
			{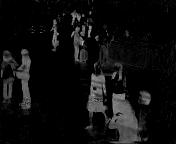}
		\\

		\includegraphics[width=0.8in]
{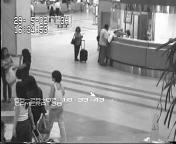}

&

\includegraphics[width=0.8in]
{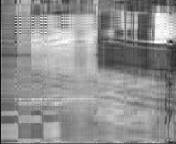}

&
\includegraphics[width=0.8in]
{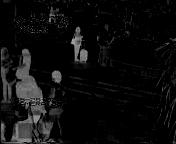}

&
\includegraphics[width=0.8in]
{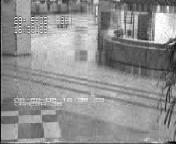}

&
\includegraphics[width=0.8in]
{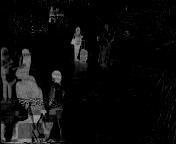}

&
\includegraphics[width=0.8in]
{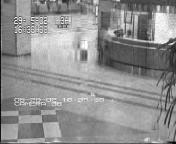}

&
\includegraphics[width=0.8in]
{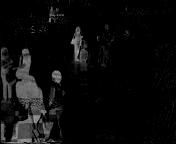}
\\

		\includegraphics[width=0.8in]
{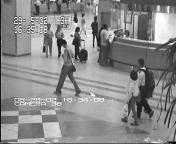}

&

\includegraphics[width=0.8in]
{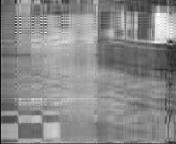}

&
\includegraphics[width=0.8in]
{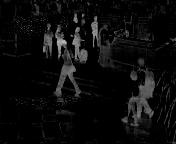}

&
\includegraphics[width=0.8in]
{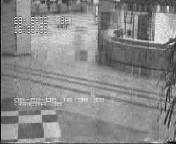}

&
\includegraphics[width=0.8in]
{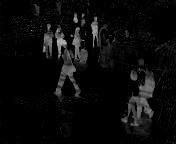}

&
\includegraphics[width=0.8in]
{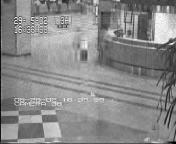}

&
\includegraphics[width=0.8in]
{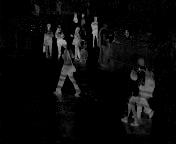}
\\

		\includegraphics[width=0.8in]
{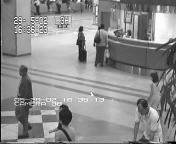}

&

\includegraphics[width=0.8in]
{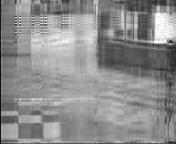}

&
\includegraphics[width=0.8in]
{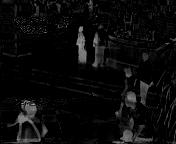}

&
\includegraphics[width=0.8in]
{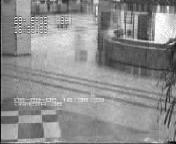}

&
\includegraphics[width=0.8in]
{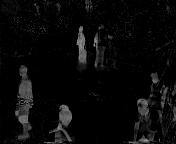}

&
\includegraphics[width=0.8in]
{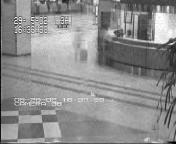}

&
\includegraphics[width=0.8in]
{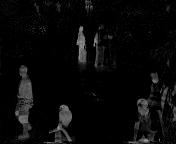}
\\

		\includegraphics[width=0.8in]
{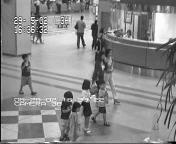}

&

\includegraphics[width=0.8in]
{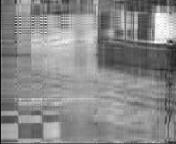}

&
\includegraphics[width=0.8in]
{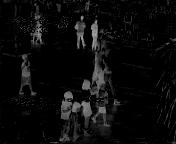}

&
\includegraphics[width=0.8in]
{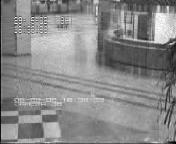}

&
\includegraphics[width=0.8in]
{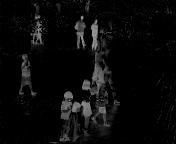}

&
\includegraphics[width=0.8in]
{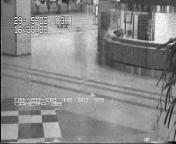}

&
\includegraphics[width=0.8in]
{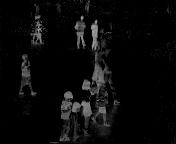}
\\
		
		\footnotesize (a) &\footnotesize (b) &\footnotesize (c) &\footnotesize (d)  &\footnotesize (e)  &\footnotesize (f)  &\footnotesize (g)     
	\end{tabular}

	\caption{Some extracting frames by HQ-ADMM from the video airport. Column (a): The original frames; Columns (b) and (c): Extracted with $R=10$;  Columns (d) and (e):  Extracted with $R=30$; Columns (f) and (g): Extracted with $R=50$. }\label{fig:video}
\end{figure}

\paragraph{Simultaneous  foreground-background extraction and compression}
Foreground-background extraction finds applications in video surveillance, where the aim is to detect moving objects  such as human beings from static background. As the background changes little in the video, it is reasonable to project the background frames to a low dimensional subspace to compress the data. We show how this problem can be fitted
into our model \eqref{prob:robust_orth_main}. Assume that a gray video consists of $l$ frames, each of size $m\times n$, resulting into a third-order tensor $\mathcal A\in\mathbb R^{l\times m\times n}$. Let $A_i$ denotes its $i$-th frame. Our goal is to decompose it as $A_r = B_r + F_r$, in which $B_r$ and $F_r$ denote  the back-/foreground frames, respectively. Under the assumption that $B_r$'s lie in a low dimensional subspace with commonalities,  we write $B_r = UD_r V^\top = \sum^R_{i=1} (D_r)_{ii} \mathbf u_i\mathbf v_i^\top$, $1\leq r\leq l$, where $U=[\mathbf u_1,\ldots,\mathbf u_R],V=[\mathbf v_1,\ldots,\mathbf v_R]$ are orthonormal matrices,   $D_r$ is   diagonal, and $R$ is a parameter. On the other hand, the foreground is often sparse and can be recognized as outliers. Therefore, the Cauchy loss can be employed to control the effect of outliers. Denoting 
\[
\phi_\delta(A_r- UD_r V^\top):=   \sum^{m,n}_{s=1,t=1}  \nolimits \frac{\delta^2}{2}\log\bigxiaokuohao{ 1+ \bigxiaokuohao{ (A_r)_{st} - (UD_r V^\top  )_{st}     }^2/\delta^2   },
\]
the problem can be modeled as
\[
\min_{U^\top U=I,V^\top V=I}\nolimits \sum^l_{r=1}\nolimits \phi_\delta(A_r- UD_r V^\top) .
\]
If we further  denote $D\in\mathbb R^{l\times R}$ where the $r$-th row is exactly the diagonal entries of $D_r$, the it can be written in the form of \eqref{prob:robust_orth_main}, i.e., 
\[
\min_{U^\top U=I,V^\top V=I}\nolimits  \boldsymbol{ \Phi}_\delta\bigxiaokuohao{ \mathcal A - \bigllbracket{ D,U,V   }   },
\]
where $\boldsymbol{ \sigma}$   is absorbed into $D$.

The tested video ``airport'' was downloaded from \url{http://perception.i2r.a-star.edu.sg/bk_model/bk_index.html}. The video consists of $4583$ frames, each of size $144\times 176$. We use $1000$ frames, resulting into a tensor $\mathcal A\in\mathbb R^{1000\times 144\times 176}$. $\mathcal A$ is then normalized for conveniently choosing parameters, where we set $\delta=0.05$, $\tau=1$, and $\alpha = 10^{-8}$. The parameter $R$ varies in $\{10,20,30,40,50\}$. The quantitative results are reported in Table \ref{tab:fb_extraction}, in which we can see that HQ-ADMM stops around $30\sim 40$ iterates, and consumes around $30$ seconds, which demonstrates the efficiency of the algorithm. The last column shows the compressed ratio of the compressed background factors $D,U,V$ to the sum of background frames $B_r$, $1\leq r\leq l$, from which we observe that the ratio is very high, resulting into low storage space. Some extracted frames with $R\in\{10,30,50\}$ are illustrated in Fig. \ref{fig:video}. From the figures, we see that even when $R=10$, HQ-ADMM can successfully seperate the back-/foreground; of course, when $R\geq 30$, the extrated frames are of higher quality, in that the background frames reconstructed from $UD_r V^\top$ are more clear. 
 
\section{Conclusions}\label{sec:conclusion}
 Heavy-tailed noise and outliers often contaminate real-world data. In the context of   tensor canonical polyadic approximation problem with one or more latent factor matrices having orthonormal columns, most existing models rely on the least squares loss, which is not resistant to heavy-tailed noise or outliers. To gain robustness, a Cauchy loss based robust orthogonal tensor approximation model was proposed in this work. To efficiently solve this model, by exploring its half-quadratic property, a new algorithm, termed as HQ-ADMM, was developed under the framework of alternating direction method of multipliers. Its global convergence was then established, thanks to some nice properties of the Cauchy loss. Numerical experiments on synthetic as well as real data demonstrate the efficiency and robustness of the proposed model and algorithm. In future work, it would be interesting to incorporate other robust losses in the orthogonal tensor approximation problem and to apply HQ-ADMM to solve other Cauchy loss based problems, as noted in Remark \ref{rmk:hq_admm}.

   \bibliography{nonconvex,tensor,robust,orth_tensor,TensorCompletion1}
\bibliographystyle{plain}

\end{document}